\DeclareMathOperator{\im}{im}
\DeclareMathOperator{\tr}{tr}
\DeclareMathOperator{\Vol}{Vol}
\DeclareMathOperator{\dvol}{dvol}
\DeclareMathOperator{\Ric}{Ric}
\DeclareMathOperator{\Rm}{Rm}
\DeclareMathOperator{\End}{End}
\DeclareMathOperator{\hash}{\sharp}
\DeclareMathOperator{\Imaginary}{Im}
\newcommand{\Diff}{\mathrm{Diff}}
\newcommand{\Con}{\mathrm{Con}}
\newcommand{\oL}{\overline{L}}
\newcommand{\oX}{\overline{X}}
\newcommand{\of}{\overline{f}}
\newcommand{\og}{\overline{g}}
\newcommand{\he}{\widehat{e}}
\newcommand{\hg}{\widehat{g}}
\newcommand{\hQ}{\widehat{Q}}
\newcommand{\lp}{\langle}
\newcommand{\rp}{\rangle}
\newcommand{\lv}{\lvert}
\newcommand{\rv}{\rvert}
\newcommand{\lV}{\lVert}
\newcommand{\rV}{\rVert}
\newcommand{\llp}{\lp\!\lp}
\newcommand{\rrp}{\rp\!\rp}
\newcommand{\leftllp}{\left\lp\!\!\!\left\lp}
\newcommand{\rightrrp}{\right\rp\!\!\!\right\rp}
\newcommand{\bigllp}{\bigl\lp\!\bigl\lp}
\newcommand{\bigrrp}{\bigr\rp\!\bigr\rp}
\newcommand{\bCP}{\mathbb{C}P}
\newcommand{\mC}{\mathcal{C}}
\newcommand{\mE}{\mathcal{E}}
\newcommand{\mF}{\mathcal{F}}
\newcommand{\mI}{\mathcal{I}}
\newcommand{\mK}{\mathcal{K}}
\newcommand{\mL}{\mathcal{L}}
\newcommand{\mM}{\mathcal{M}}
\newcommand{\mP}{\mathcal{P}}
\newcommand{\mR}{\mathcal{R}}
\newcommand{\mS}{\mathcal{S}}
\newcommand{\mV}{\mathcal{V}}
\newcommand{\kC}{\mathfrak{C}}
\newcommand{\kK}{\mathfrak{K}}
\newcommand{\kM}{\mathfrak{M}}
\newcommand{\bC}{\mathbb{C}}
\newcommand{\bN}{\mathbb{N}}
\newcommand{\bR}{\mathbb{R}}
\def\sideremark#1{\ifvmode\leavevmode\fi\vadjust{\vbox to0pt{\vss
 \hbox to 0pt{\hskip\hsize\hskip1em
 \vbox{\hsize3cm\tiny\raggedright\pretolerance10000
 \noindent #1\hfill}\hss}\vbox to8pt{\vfil}\vss}}}
\newcommand{\suchthat}{\mathrel{}\middle|\mathrel{}}
\newcommand{\comment}[1]{}
\newtheorem{thm}{Theorem}[section]
\newtheorem{prop}[thm]{Proposition}
\newtheorem{lem}[thm]{Lemma}
\newtheorem{cor}[thm]{Corollary}
\theoremstyle{definition}
\newtheorem{defn}[thm]{Definition}
\newtheorem{example}{Example}[subsection]
\theoremstyle{remark}
\newtheorem{remark}[thm]{Remark}
\numberwithin{equation}{section}
\begin{document}

\title{Conformally variational Riemannian invariants}
\author{Jeffrey S. Case}
\address{109 McAllister Building \\ Penn State University \\ University Park, PA 16802 \\ USA}
\email{jscase@psu.edu}
\author{Yueh-Ju Lin}
\address{Department of Mathematics \\ Princeton University \\ Princeton, NJ 08544 \\ USA}
\email{yuehjul@math.princeton.edu}
\author{Wei Yuan}
\address{Department of Mathematics \\ Sun Yat-sen University \\ Guangzhou, Guangdong
510275 \\ China}
\email{gnr-x@163.com}
\keywords{deformation of Riemannian invariant, conformally variational invariant, stability, rigidity, volume comparison}
\subjclass[2010]{Primary 53C20; Secondary 53A55, 53C21, 53C24}
\begin{abstract}
 Conformally variational Riemannian invariants (CVIs), such as the scalar curvature, are homogeneous scalar invariants which arise as the gradient of a Riemannian functional.  We establish a wide range of stability and rigidity results involving CVIs, generalizing many such results for the scalar curvature.
\end{abstract}
\maketitle

\section{Introduction}
\label{sec:intro}

The variational properties of the scalar curvature underlie its outsize importance in Riemannian geometry.  Among the consequences of these properties are variational characterizations of Einstein metrics~\cite{Besse}, constant scalar curvature metrics~\cite{Yamabe1960}, and static and related metrics~\cite{Corvino2000,CorvinoEichmairMiao2013}; obstructions to the existence of metrics with prescribed scalar curvature in a conformal class~\cite{Bourguignon1985,KazdanWarner1975} and existence results within the space of Riemannian metrics~\cite{FischerMarsden1975}; the Schur lemma~\cite{Kazdan1981} and the almost Schur lemma~\cite{DeLellisTopping2012};
and rigidity of flat manifolds~\cite{FischerMarsden1975}.  Indeed, these results are all consequences of readily checked properties the first and second variations of the scalar curvature.  In this article, we show that there are large classes of scalar Riemannian invariants which satisfy similar variational properties and consequences thereof.  This work is heavily influenced by previous work of Gover and {\O}rsted~\cite{GoverOrsted2010} which explored obstructions to prescribing the first such class of scalar invariants, conformally variational invariants, within a conformal class.  It is also strongly influenced by recent generalizations of some of the above results to the $Q$-curvature~\cite{ChangGurskyYang1996,LinYuan2015a,LinYuan2015b} and the renormalized volume coefficients~\cite{ChangFang2008,ChangFangGraham2012,Graham2009}.

A \emph{conformally variational Riemannian invariant (CVI)} is a natural Riemannian scalar invariant which is homogeneous and has a conformal primitive.  We say that $L$ is \emph{homogeneous of degree $-2k$} if $L_{c^2g}=c^{-2k}L_g$ for all metrics $g$ and all constants $c>0$, while a \emph{conformal primitive} is a Riemannian functional $\mS$ such that
\[ \left.\frac{d}{dt}\right|_{t=0}\mS\left(e^{2t\Upsilon}g\right) = \int_M L_g\Upsilon\,\dvol_g \]
for all metrics $g$ and all smooth functions $\Upsilon$ of compact support on $M$.  For technical reasons, we consider only CVIs of weight $-2k$ on manifolds of dimension $n\geq2k$; see Section~\ref{sec:bg} for further discussion.  For practical purposes, it is easier to work with the equivalent definition that $L$ is a CVI of weight $-2k$ if and only if there is a natural formally self-adjoint operator $A_g$ such that $A_g(1)=0$ and
\begin{equation}
 \label{eqn:intro_cvi_linearization}
 \left.\frac{\partial}{\partial t}\right|_{t=0} L_{e^{2t\Upsilon}g} = -2kL_g\Upsilon + A_g(\Upsilon)
\end{equation}
for all metrics $g$ and all smooth functions $\Upsilon$; see~\cite{BransonGover2008}.  Since the scalar curvature is the conformal gradient of the total scalar curvature functional in dimensions at least three~\cite{Yamabe1960} and the conformal gradient of the functional determinant of the Laplacian in dimension two~\cite{OsgoodPhillipsSarnak1988,Polyakov1981}, it is a CVI of weight $-2$; this can also be checked directly by computing the conformal linearization~\eqref{eqn:intro_cvi_linearization} of the scalar curvature.

The first systematic study of CVIs was carried out by Branson and {\O}rsted~\cite{BransonOrsted1991}, where they observed that the fiberwise heat trace coefficients of conformally covariant operators are CVIs.  This observation plays a key role in their computations of certain functional determinants in dimension four and six~\cite{Branson1995,Branson1996,BransonOrsted1991b}.  Indeed, these computations allow one to show that every CVI of weight $-4$ has a conformal primitive in dimension $4$ which is the functional determinant of a product of conformally covariant operators.

Since CVIs have a conformal primitive, one wonders about using variational methods to prescribe a given CVI.  Within a conformal class, this problem is obstructed: Gover and {\O}rsted showed~\cite{GoverOrsted2010} (see also~\cite{Bourguignon1985,DelanoeRobert2007}) that the Kazdan--Warner obstruction extends to CVIs.  For CVIs $L$ of weight $-2k$ on manifolds of dimension $n>2k$, this is a consequence of the fact that a nonzero multiple of the total $L$-functional, $\mS(g):=\int L_g\,\dvol_g$, is a conformal primitive for $L$.  Indeed, let $\Gamma\colon S^2T^\ast M\to C^\infty(M)$ be the metric linearization of $L$ and set $S=-\Gamma^\ast(1)$ for $\Gamma^\ast$ the $L^2$-dual of $\Gamma$.  It follows immediately that
\[ \left.\frac{\partial}{\partial t}\right|_{t=0}\mS(g+th) = -\int_M \bigl\lp S-\frac{L}{2}g,h\bigr\rp\,\dvol_g , \]
while naturality implies that $S-\frac{L}{2}g$ is divergence-free.  The trace of $S$ can be computed by letting $h=\Upsilon g$, yielding a Schur-type lemma (see Lemma~\ref{lem:divGamma}).  More generally, we have the following ``almost Schur lemma'' (cf.\ \cite{DeLellisTopping2012,LinYuan2015b}):

\begin{thm}
 \label{thm:intro_almost_schur}
 Let $L$ be a CVI of weight $-2k$.  Let $(M^n,g)$ be a compact Riemannian manifold with $n>2k$ and $\Ric\geq0$.  Then
 \begin{equation}
  \label{eqn:intro_almost_schur}
  \int_M \left(L-\oL\right)^2\dvol \leq \frac{4n(n-1)}{(n-2k)^2}\int_M \lv S_0\rv^2\,\dvol,
 \end{equation}
 where $\oL=(\int L)/(\int 1)$ is the average of $L$ and $S_0$ is the trace-free part of $S$.
\end{thm}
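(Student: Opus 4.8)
The plan is to mimic the De Lellis--Topping argument for the classical almost Schur lemma, replacing the contracted second Bianchi identity with the divergence identity $\divsymb(S - \tfrac{L}{2}g) = 0$ coming from naturality (Lemma~\ref{lem:divGamma}). First I would introduce the function $u$ solving the Poisson equation $\Delta u = L - \oL$ on the closed manifold $(M^n,g)$; this is solvable because $L - \oL$ has zero mean, and $u$ is determined up to a constant. Then I would compute
\[
 \int_M (L-\oL)^2\,\dvol = \int_M (L-\oL)\,\Delta u\,\dvol = \int_M \langle d(L-\oL), -du\rangle\,\dvol = -\int_M \langle dL, du\rangle\,\dvol,
\]
and the next step is to relate $\int_M \langle dL, du\rangle$ to the trace-free tensor $S_0$ via the divergence identity.

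The key algebraic input is that $S - \tfrac{L}{2}g$ is divergence-free, so taking the divergence and contracting gives an expression for $dL$ in terms of $\divsymb S$; writing $S = S_0 + \tfrac{\tr S}{n} g$ and using the Schur-type computation of $\tr S$ from Lemma~\ref{lem:divGamma} (which identifies $\tr S$ with a constant multiple of $L$, say $\tr S = \tfrac{n-2k}{2}L$ after normalizing $\Gamma$), one finds $\divsymb S_0 = \tfrac{n-2k}{2n}\,dL$ after rearranging. Substituting, integrating by parts once more against $\nabla u$, and recognizing the Hessian term,
\[
 \int_M (L-\oL)^2\,\dvol = -\frac{2n}{n-2k}\int_M \langle \divsymb S_0, du\rangle\,\dvol = \frac{2n}{n-2k}\int_M \langle S_0, \nabla^2 u\rangle\,\dvol = \frac{2n}{n-2k}\int_M \langle S_0, \interior{\nabla^2 u}\rangle\,\dvol,
\]
where in the last step I used that $S_0$ is trace-free to replace $\nabla^2 u$ by its trace-free part $\interior{\nabla^2 u} = \nabla^2 u - \tfrac{\Delta u}{n}g$.

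The final step is a Cauchy--Schwarz estimate combined with the Bochner formula. Applying Cauchy--Schwarz to the last integral yields
\[
 \int_M (L-\oL)^2\,\dvol \leq \frac{2n}{n-2k}\left(\int_M \lv S_0\rv^2\,\dvol\right)^{1/2}\left(\int_M \lv \interior{\nabla^2 u}\rv^2\,\dvol\right)^{1/2},
\]
so it remains to bound $\int_M \lv \interior{\nabla^2 u}\rv^2$ by a multiple of $\int_M (\Delta u)^2 = \int_M (L-\oL)^2$. This is exactly where the hypothesis $\Ric \geq 0$ enters: the Bochner formula $\tfrac12\Delta\lv\nabla u\rv^2 = \lv\nabla^2 u\rv^2 + \langle\nabla u,\nabla\Delta u\rangle + \Ric(\nabla u,\nabla u)$, integrated over the closed manifold, gives $\int_M \lv\nabla^2 u\rv^2 = \int_M (\Delta u)^2 - \int_M \Ric(\nabla u,\nabla u) \leq \int_M (\Delta u)^2$; subtracting off the trace part gives $\int_M \lv\interior{\nabla^2 u}\rv^2 = \int_M \lv\nabla^2 u\rv^2 - \tfrac1n\int_M(\Delta u)^2 \leq \tfrac{n-1}{n}\int_M(\Delta u)^2$. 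Plugging this in and squaring both sides produces the constant $\tfrac{4n(n-1)}{(n-2k)^2}$, completing the proof. The main obstacle — and the only place one must be careful — is bookkeeping the normalization of $\Gamma$ and hence the precise constant $\tr S = \tfrac{n-2k}{2}L$; I would pin this down by the test variation $h = \Upsilon g$, using that $\Gamma(\Upsilon g)$ is the conformal linearization $-2kL\Upsilon + A_g(\Upsilon)$ of~\eqref{eqn:intro_cvi_linearization} with $A_g(1)=0$, exactly as in Lemma~\ref{lem:divGamma}.
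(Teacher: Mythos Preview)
Your approach is essentially identical to the paper's: solve $\Delta u = L - \oL$, use $\delta S_0 = \tfrac{n-2k}{2n}\,dL$ to convert to a pairing with $S_0$, integrate by parts, apply Cauchy--Schwarz, and finish via the Bochner inequality under $\Ric\geq 0$. One bookkeeping slip: the trace is $\tr S = kL$ (this comes from Lemma~\ref{lem:trGamma}, not Lemma~\ref{lem:divGamma}), not $\tfrac{n-2k}{2}L$; your own test-variation method yields this correctly if carried through, and combined with $\delta S = \tfrac{1}{2}\,dL$ from Lemma~\ref{lem:divGamma} it produces exactly the divergence identity you stated.
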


See Section~\ref{sec:schur} for a proof and discussion of the case of equality in~\eqref{eqn:intro_almost_schur}.

To establish affirmative results for the problem of prescribing the value of a CVI, we impose an ellipticity assumption: A CVI $L$ is \emph{elliptic} if the operator $A$ determined by~\eqref{eqn:intro_cvi_linearization} is elliptic.  For elliptic CVIs, the problem of locally prescribing $L$ is unobstructed if $\ker\Gamma^\ast=\{0\}$:

\begin{thm}
 \label{thm:intro_corollary}
 Let $L$ be a CVI which is elliptic at $(M^n,g)$.  If $\ker\Gamma^\ast=\{0\}$, then there are neighborhoods $U$ of $g$ in the space of Riemannian metrics on $M$ and $V$ of $L_g$ in $C^\infty(M)$ such that for any $f\in V$, there is a metric $\hg\in U$ such that $L_{\hg}=f$.
\end{thm}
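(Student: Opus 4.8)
The plan is to realize $L_g = f$ as a solution of a nonlinear elliptic equation on $M$ and apply the implicit function theorem in Hölder spaces. First I would set up the map
\[ \Phi\colon \Met^{k+2,\alpha}(M) \longrightarrow C^{k,\alpha}(M), \qquad \Phi(\hg) = L_{\hg}, \]
where $\Met^{k+2,\alpha}(M)$ denotes the open subset of $C^{k+2,\alpha}$ Riemannian metrics; here one should choose the regularity indices so that the $2$-th order operator structure of $A$ makes $\Phi$ a well-defined $C^1$ map between Banach manifolds, using that $L$ is a natural scalar invariant polynomial in the metric, its inverse, and finitely many covariant derivatives of curvature. The linearization of $\Phi$ at $g$ is the metric linearization $\Gamma\colon S^2T^\ast M \to C^\infty(M)$ appearing in the statement. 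The obstacle is that $\Gamma$ is highly underdetermined — it acts on symmetric $2$-tensors — so the implicit function theorem does not apply to $\Phi$ directly; the standard remedy, as in the work of Fischer--Marsden for the scalar curvature, is to restrict to conformal deformations, where ellipticity enters.

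Next I would restrict $\Phi$ to the conformal class of $g$: define $\Psi(u) = L_{e^{2u}g}$ for $u \in C^{k+2,\alpha}(M)$ near $0$. By the definition of a CVI via~\eqref{eqn:intro_cvi_linearization}, the linearization of $\Psi$ at $u = 0$ is
\[ D\Psi|_0(\Upsilon) = -2k L_g \Upsilon + A_g(\Upsilon), \]
which is a linear differential operator of order $\le 2$ (order exactly $2$ by the ellipticity hypothesis, since $A_g$ is elliptic of order $2$). Thus $D\Psi|_0$ is an elliptic operator on the compact manifold $M$, hence Fredholm of index zero as a map $C^{k+2,\alpha}(M) \to C^{k,\alpha}(M)$. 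If $D\Psi|_0$ were an isomorphism, the implicit function theorem would immediately furnish, for every $f$ near $L_g$, a conformal factor $e^{2u}$ with $L_{e^{2u}g} = f$, giving the theorem with $U$ a conformal neighborhood. In general $D\Psi|_0$ may have a kernel, so the main step is to absorb that kernel using the remaining (non-conformal) metric directions.

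The key point is that the hypothesis $\ker\Gamma^\ast = \{0\}$ forces $\Gamma$, and hence the full linearization $D\Phi|_g$, to be \emph{surjective} onto $C^{k,\alpha}(M)$: indeed $\Gamma$ is an overdetermined-elliptic-adjoint situation in the sense that $\Gamma^\ast\colon C^\infty(M)\to S^2T^\ast M$ has injective symbol off a measure-zero set (this is where ellipticity of $A$, i.e.\ of the principal part of $\Gamma\circ(\,\cdot\,g)$, is used together with the Schur-type identity from Lemma~\ref{lem:divGamma} identifying the relevant principal part), so $\Gamma^\ast$ has closed range and $\im\Gamma = (\ker\Gamma^\ast)^\perp = C^\infty(M)$. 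Once $D\Phi|_g$ is surjective with split kernel (the kernel is closed and finite-codimensional is not needed — one only needs a bounded right inverse, which exists because the cokernel vanishes and, by elliptic theory applied to $\Gamma^\ast$, the range is closed), the implicit function theorem (in the surjective form: if $D\Phi|_g$ admits a bounded right inverse, then $\Phi$ maps every neighborhood of $g$ onto a neighborhood of $L_g$) yields neighborhoods $U \ni g$ and $V \ni L_g$ with $\Phi(U) \supseteq V$. Finally I would run elliptic regularity on the equation $L_{\hg} = f$ with $f \in C^\infty$ to upgrade the $C^{k+2,\alpha}$ solution to a smooth metric $\hg$, and shrink $U, V$ so that all metrics in $U$ are smooth perturbations; this gives the statement as phrased.

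The step I expect to be the main obstacle is establishing that $\im\Gamma = C^{k,\alpha}(M)$ — equivalently, the closed-range property for $\Gamma^\ast$ — from the two inputs $\ker\Gamma^\ast = \{0\}$ and ellipticity of $A$. The subtlety is that $\Gamma$ is not itself elliptic; one has to identify its principal symbol, check that on $2$-tensors of the form $\Upsilon g$ it reproduces the elliptic symbol of $A$ (so that $\Gamma\circ(\,\cdot\,g) = D\Psi|_0$ up to the zeroth-order term $-2kL_g$), and then invoke a general functional-analytic fact: a densely defined operator with a formal adjoint that has injective principal symbol and trivial $L^2$-kernel has dense range, and combined with the $a\ priori$ elliptic estimate for the adjoint one gets closed, hence full, range. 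I would isolate this as a lemma preceding the proof.
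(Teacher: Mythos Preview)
Your strategy is the paper's: prove that $\Gamma^\ast$ has injective principal symbol, deduce (via the Berger--Ebin splitting, which is the ``closed range'' lemma you anticipate) that $C^\infty(M)=\ker\Gamma^\ast\oplus\im\Gamma$, conclude from $\ker\Gamma^\ast=\{0\}$ that $D\Phi|_g=\Gamma$ is surjective, and invoke the implicit function theorem.  A few corrections.  First, the operator $A_g=-\delta Td$ has order $2k$, not $2$ (e.g.\ order $4$ for $Q_4$); adjust the H\"older indices accordingly.  Second, the detour through the conformal map $\Psi$ and the discussion of ``absorbing its kernel'' are unnecessary: the paper never restricts to conformal variations in this proof, and you yourself abandon $\Psi$ once you pass to the full $\Gamma$.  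Third, the injectivity of the principal symbol of $\Gamma^\ast$ comes from Lemma~\ref{lem:trGamma}, not Lemma~\ref{lem:divGamma}: the trace identity gives $\tr\Gamma^\ast(f)=\frac{1}{2}DL(f)$, so the trace of the symbol of $\Gamma^\ast$ equals the (invertible, by ellipticity) symbol of $DL$, whence the symbol of $\Gamma^\ast$ is injective on all nonzero covectors, not merely ``off a measure-zero set.''  With these fixes your argument coincides with the paper's proof via Lemma~\ref{lem:injective_symbol}.
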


Theorem~\ref{thm:intro_corollary} was proven in the special cases of the scalar curvature and the fourth-order $Q$-curvature by Fischer and Marsden~\cite{FischerMarsden1975} and the second- and third-named authors~\cite{LinYuan2015a}, respectively.  See Section~\ref{sec:singular} for a description of the topologies used on the spaces of metrics and smooth functions on $M$.

When $L$ is the scalar curvature, the assumption $\ker\Gamma^\ast=\{0\}$ in Theorem~\ref{thm:intro_corollary} is equivalent to the assumption that $(M^n,g)$ is not a static metric in the sense of Corvino~\cite{Corvino2000}.  Similar to Theorem~\ref{thm:intro_corollary}, we prove in Section~\ref{sec:critical} that the problem of locally prescribing $L$ and $\Vol(M)$ is unobstructed if there are no solutions to $\Gamma^\ast(f)=\lambda g$ (see~\cite{CorvinoEichmairMiao2013} for the case when $L$ is the scalar curvature).  As in the case of the scalar curvature~\cite{MiaoTam2008} and the fourth-order $Q$-curvature~\cite{LinYuan2015b}, the assumption $\ker\Gamma^\ast=\{0\}$ is rather weak:

\begin{thm}
 \label{thm:intro_Lconst}
 Let $L$ be a CVI of weight $-2k$.  Suppose that $(M^n,g)$ is such that $L$ is elliptic at $g$.  Assume further that $DL_g$ is of order at most four or that $k\leq3$.  If $L_g$ is nonconstant, then $\ker\Gamma^\ast=\{0\}$.
\end{thm}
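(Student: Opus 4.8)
The plan is to argue by contradiction: assume $\ker\Gamma^\ast \ne \{0\}$, so there is a nonzero symmetric $2$-tensor $h$ with $\Gamma^\ast(h)=0$, and deduce that $L_g$ must be constant. Recall that $\Gamma^\ast$ is the $L^2$-formal adjoint of the metric linearization $\Gamma = DL_g$, and that $S = -\Gamma^\ast(1)$. The key structural input is the conformal linearization identity~\eqref{eqn:intro_cvi_linearization}: restricting $\Gamma$ to conformal directions $h=\Upsilon g$ gives $\Gamma(\Upsilon g) = -2kL\Upsilon + A(\Upsilon)$. Taking $L^2$-inner products, for any symmetric $2$-tensor $T$ and any $\Upsilon\in C^\infty(M)$ we have $\langle \Gamma^\ast(T), \Upsilon g\rangle_{L^2} = \langle T, \Gamma(\Upsilon g)\rangle_{L^2} = \langle T, -2kL\Upsilon + A(\Upsilon)\rangle_{L^2}$, so the trace part of $\Gamma^\ast(T)$ is governed by $-2kLT_{\tr} + A^\ast(\tr T)$ where $T_{\tr}=\tfrac1n\langle T,g\rangle$ and $A$ is formally self-adjoint. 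In particular, from $\Gamma^\ast(h)=0$ we extract both a "trace equation" relating $\tr h$ to $L$ and $A$, and—using that $S - \tfrac{L}{2}g$ is divergence-free (the naturality fact invoked in the Introduction)—a "divergence equation" for $h$.

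Next I would follow the template established for the scalar curvature by Fischer–Marsden and for the $Q$-curvature by Lin–Yuan. Write out $\Gamma^\ast(h) = 0$ explicitly. For a CVI whose linearization $DL_g$ has order $\le 4$ (the standing hypothesis, automatic when $k\le 3$ since then $A$ has order $2k\le 6$ but the relevant structure still closes; this is exactly why the order restriction appears), $\Gamma^\ast(h)$ has the schematic form
\begin{equation*}
 \Gamma^\ast(h) = \nabla^2(\text{contractions of }h) + (\text{curvature})\ast h + \dots = 0,
\end{equation*}
and the top-order part is controlled by the principal symbol of $A$, which is elliptic by assumption. The standard manipulation is: take the trace of the equation $\Gamma^\ast(h)=0$ to get a scalar equation, take the divergence to get a one-form equation, and combine these with the original tensor equation. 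Ellipticity of $A$ lets one solve for the second fundamental data of $h$ and show that $h$ is, up to lower-order terms, a multiple of $g$ determined by a single scalar function $u$; then the remaining content of $\Gamma^\ast(h)=0$ becomes an overdetermined system forcing $u\,\nabla^2 L + (\text{lower order in }u,L) = 0$ type relations. The cleanest route is to test $\Gamma^\ast(h)=0$ against $\Upsilon g$ and against the traceless part separately, integrate by parts using formal self-adjointness of $A$ and $A(1)=0$, and arrive at an identity of the form $\int_M (\nabla L)\cdot(\text{vector built from }h) = 0$ together with a pointwise relation that, when $h\ne 0$, is only consistent if $\nabla L \equiv 0$.

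The main obstacle—and the reason for the hypothesis "$DL_g$ of order at most four or $k\le 3$"—is controlling the lower-order curvature terms in $\Gamma^\ast(h)$ well enough to run the ellipticity/unique-continuation argument: for high-weight CVIs the operator $A$ has high order, the linearization $DL_g$ involves many derivatives of curvature, and the algebraic structure of $\Gamma^\ast$ becomes too complicated to extract the needed divergence and trace identities in closed form. Under the order restriction, one has an explicit enough normal form (as in the scalar curvature case, where $\Gamma^\ast(h) = -(\Delta\tr h)g + \nabla^2\tr h - h\,\Ric$, and in the $Q$-curvature case treated in~\cite{LinYuan2015a,LinYuan2015b}) to carry this out. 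I would therefore organize the proof as: (i) write the general form of $\Gamma^\ast$ for CVIs with the order restriction, using~\eqref{eqn:intro_cvi_linearization} and Lemma~\ref{lem:divGamma}; (ii) assume $0\ne h\in\ker\Gamma^\ast$ and derive the trace and divergence equations; (iii) invoke ellipticity of $A$ to reduce $h$ to its scalar (conformal) part plus controlled corrections; (iv) substitute back and conclude $\nabla L_g = 0$, contradicting the hypothesis that $L_g$ is nonconstant.
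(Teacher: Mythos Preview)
Your proposal rests on a basic misreading of the objects involved. The operator $\Gamma^\ast$ is the adjoint of the metric linearization $\Gamma=DL_g\colon S^2T^\ast M\to C^\infty(M)$, so $\Gamma^\ast$ maps $C^\infty(M)$ to $S^2T^\ast M$. Thus an element of $\ker\Gamma^\ast$ is a \emph{function} $f$ with $\Gamma^\ast(f)=0$, not a symmetric $2$-tensor $h$. (Compare your quoted Fischer--Marsden formula: it is $\Gamma^\ast(f)=-f\Ric+\nabla^2f-(\Delta f)g$ with $f\in C^\infty(M)$.) Everything downstream in your sketch---the ``trace equation for $\tr h$'', the reduction of $h$ to its conformal part, etc.---is built on this reversed picture and does not correspond to the actual problem.

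Beyond that, the proof in the paper does not proceed by algebraic closure of an overdetermined system. The two inputs are exactly the trace and divergence identities, but used as follows. Lemma~\ref{lem:divGamma} gives $\delta\Gamma^\ast(f)=-\tfrac12 f\,dL$, so $\Gamma^\ast(f)=0$ forces $f\,dL=0$; if $L_g$ is nonconstant, $f$ vanishes on a nonempty open set. Lemma~\ref{lem:trGamma} shows that $\tr\Gamma^\ast(f)=\tfrac12 DL_g(f)$, so $f$ also solves the elliptic equation $DL_g(f)=0$. The conclusion $f\equiv0$ then follows from the \emph{weak unique continuation property} for $DL_g$. This is the essential analytic step you are missing, and it is precisely why the hypothesis ``order $\le4$ or $k\le3$'' appears: these are the cases in which Zuily's unique continuation theorem (Theorem~\ref{thm:zuily}) applies to the principal symbol of $DL_g$, either because the order is low or because the leading part is a nonzero multiple of $(-\Delta)^k$ with $k\le3$. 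Your explanation that the restriction is needed to ``write $\Gamma^\ast$ in closed form'' is not the actual reason.
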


The reason for the restriction $k\leq3$ in Theorem~\ref{thm:intro_Lconst} is that our proof requires $DL_g$ to satisfy the weak unique continuation property; see Section~\ref{sec:singular} for further discussion.

One nice property of the total scalar curvature functional and the functional determinant of the Laplacian on surfaces --- the \emph{standard} conformal primitives for the scalar curvature --- is that they have one-sided bounds in a conformal class and are extremized at an Einstein metric in a conformal class, if it exists~\cite{Obata1971,OsgoodPhillipsSarnak1988,Yamabe1960}.  It is also the case that the functional determinants of the conformal Laplacian and the Dirac operator are extremized at round metrics on the four-sphere in their conformal class~\cite{BransonChangYang1992}.  However, this property is not satisfied by the functional determinant of the Paneitz operator on the four-sphere; indeed, this functional is neither bounded above nor below and admits critical points which are not round~\cite{GurskyMalchiodi2012}.  We seek sufficient conditions for a CVI to be extremized at Einstein metrics in their conformal class, when they exist.

Motivated by the above problem, we say that a CVI $L$ of weight $-2k$ is \emph{variationally stable at $(M^n,g)$} if $L_g$ is constant and
\[ \int_M \Upsilon\,A_g(\Upsilon)\,\dvol_g \geq 2kL_g\int_M \Upsilon^2\,\dvol_g \]
for all $\Upsilon\in C^\infty(M)$, with equality if and only if there is a vector field $X$ such that $\mL_Xg=\Upsilon g$.  Here $A_g$ is the operator from~\eqref{eqn:intro_cvi_linearization}.  As we discuss in Section~\ref{sec:second_derivative}, if $n>2k$, then these assumptions are equivalent to $D^2\mS$ being positive when restricted to curves of conformal metrics with fixed volume which are transverse to the action of the conformal group.

Variational stability and ellipticity together have interesting implications for the problem of prescribing $L$.  First, in Proposition~\ref{prop:local_stability} we show that these properties imply that, in non-critical dimensions, the total $L$-curvature functional is locally rigid at a metric $g$ at which $L$ is variationally stable and elliptic.  Second, if $L$ is constant, elliptic, and variationally stable, and if there is a nontrivial solution to $\Gamma^\ast(f)=\lambda g$, then the only obstruction to $g$ being a critial point of the total $L$-curvature functional is given by conformal Killing fields.

\begin{thm}
 \label{thm:intro_nontrivial-Lsingular}
 Let $L$ be a CVI of weight $-2k$.  Let $(M^n,g)$ admit a nontrivial $f\in C^\infty(M)$ such that $\Gamma^\ast(f)=\lambda g$ for some $\lambda\in\bR$.  If $L$ is constant, elliptic, and variationally stable at $g$ and if $(M^n,g)$ does not admit a conformal Killing field which is not Killing, then $f$ is constant.  In particular, if $n\not=2k$, then $(M^n,g)$ is a critical point of the total $L$-curvature functional $g\mapsto\int L_g\dvol_g$ in the space of Riemannian metrics of fixed volume.
\end{thm}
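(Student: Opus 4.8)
The plan is to turn the tensor equation $\Gamma^\ast(f)=\lambda g$ into a scalar identity for $f$ alone, to observe that this identity makes $f-\of$ an equality case of the variational stability inequality, and then to use the absence of genuine conformal Killing fields to force $f$ to be constant; the final assertion will follow at once from the first variation formula for the total $L$-curvature functional.

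First I would take the trace of $\Gamma^\ast(f)=\lambda g$. Pairing $\Gamma^\ast(f)$ against $\Upsilon g$ and using the conformal linearization~\eqref{eqn:intro_cvi_linearization} — which gives $\Gamma(\Upsilon g)=-kL\Upsilon+\tfrac12 A_g(\Upsilon)$, since $\tfrac{d}{dt}\big|_{t=0}e^{2t\Upsilon}g=2\Upsilon g$ — together with the formal self-adjointness of $A_g$ and the identity $A_g(1)=0$, exactly as in the proof of Lemma~\ref{lem:divGamma}, one obtains the pointwise identity $\tr_g\Gamma^\ast(f)=-kLf+\tfrac12 A_g(f)$. Since $\tr_g(\lambda g)=n\lambda$, this reads $A_g(f)=2kLf+2n\lambda$. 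Integrating over $M$ and using $\int_M A_g(f)\,\dvol=\int_M f\,A_g(1)\,\dvol=0$ pins down the constant as $2n\lambda=-2kL\of$, so that, since $A_g(\of)=0$,
\[ A_g(f-\of)=2kL\,(f-\of). \]

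Next I would set $\Upsilon:=f-\of$ and pair this equation with $\Upsilon$, obtaining $\int_M\Upsilon\,A_g(\Upsilon)\,\dvol=2kL\int_M\Upsilon^2\,\dvol$; thus $\Upsilon$ attains equality in the variational stability inequality, and by the definition of variational stability at $g$ there is a vector field $X$ with $\mL_Xg=\Upsilon g$. If $\Upsilon$ were not identically zero, then $\mL_Xg$ would be a pure-trace symmetric $2$-tensor that does not vanish, so $X$ would be a conformal Killing field that is not Killing — contrary to hypothesis. Hence $\Upsilon\equiv0$ and $f$ is constant. For the final assertion, $f$ is then a nonzero constant $c$ (it is nontrivial by hypothesis), so $\lambda g=\Gamma^\ast(f)=c\,\Gamma^\ast(1)=-c\,S$, whence $S$ — and therefore $S-\tfrac L2 g$ — is a constant multiple of $g$. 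The first variation formula $\tfrac{\partial}{\partial t}\big|_{t=0}\mS(g+th)=-\int_M\langle S-\tfrac L2 g,h\rangle\,\dvol$ for $\mS(g)=\int_M L_g\,\dvol_g$, available when $n\neq2k$, then exhibits this first variation as a constant multiple of $\tfrac{\partial}{\partial t}\big|_{t=0}\Vol(g+th)$, so it vanishes along every volume-preserving deformation; that is, $g$ is a critical point of the total $L$-curvature functional among metrics of fixed volume.

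The step I would be most careful with is the scalar identity of the second paragraph: the coefficient of $f$ in $A_g(f)=2kLf+2n\lambda$ must come out exactly as $2kL$, since it is precisely this matching with the constant $2kL$ in the definition of variational stability that turns the inequality into an equality for $f-\of$. This identity is, however, just a bookkeeping consequence of the conformal linearization and of the trace computation already recorded for Lemma~\ref{lem:divGamma}, so I do not anticipate a genuine difficulty; the remainder of the argument is a direct application of the definitions.
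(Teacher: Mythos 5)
Your argument is correct and follows essentially the same route as the paper's own proof (Theorem~\ref{thm:nontrivial-L-singular}): the scalar identity you derive is exactly Lemma~\ref{lem:trGamma} applied to $\Gamma^\ast(f)=\lambda g$ (note your citation of Lemma~\ref{lem:divGamma} should point to that trace lemma instead), and the remaining steps---equality in the variational stability inequality for $f-\of$, the no-nontrivial-conformal-Killing-field hypothesis forcing $f$ constant, and criticality via~\eqref{eqn:first_derivative}---match the paper's argument step for step.
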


However, variational stability and ellipticity cannot be sufficient conditions for the standard conformal primitive to be extremized at any Einstein metric in its conformal class.  Indeed, the conformal gradient of the functional determinant of the Paneitz operator on four-manifolds~\cite{Branson1996,GurskyMalchiodi2012} provides a counterexample; see Section~\ref{sec:weight} for further discussion.

Our last general observation about CVIs is another rigidity result, this time for flat metrics.  It is well-known that the scalar curvature is rigid at flat metrics, in that if $g$ is close to a flat metric and has nonnegative scalar curvature, it must be flat~\cite{FischerMarsden1975}.  Indeed, this is true globally~\cite{GromovLawson1980,GromovLawson1983,SchoenYau1979_torus,SchoenYau1979s}.  A similar local result is known for the fourth-order $Q$-curvature~\cite{LinYuan2015a}.  We observe that this property follows for any CVI under a simple spectral assumption; indeed, this result only requires the homogeneity and naturality of $L$.

\begin{thm}
 \label{thm:intro_rigidity}
 Let $L$ be a natural Riemannian scalar invariant which is homogeneous.  Assume additionally that for every flat manifold $(M^n,g)$, it holds that
 \begin{equation}
  \label{eqn:intro_rigidity}
  \int_M \left.\frac{\partial^2}{\partial t^2}\right|_{t=0}\left(L_{g+th}\right)\dvol_g \leq 0
 \end{equation}
 for all $h\in\ker\delta$, with equality if and only if $h$ is parallel.  Then for any flat manifold $(M^n,g)$, there is a neighborhood $U$ of $g$ in the space of metrics such that if $\og\in U$ satisfies $L_{\og}\geq0$, then $\og$ is flat.
\end{thm}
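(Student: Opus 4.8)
The plan is to imitate the Fischer--Marsden argument: combine Ebin's slice theorem for the action of $\Diff(M)$ on the space of Riemannian metrics with a second-order Taylor expansion of the total $L$-curvature functional restricted to a slice, using~\eqref{eqn:intro_rigidity} to say that, transverse to the finite-dimensional space of flat deformations, this functional attains a strict local maximum at a flat metric $g$. Concretely, by Ebin's slice theorem any metric $\og$ near $g$ is isometric to one of the form $g+h$ with $\delta_g h=0$ and $h$ small; since $L$ is natural this changes neither the sign of $L_{\og}$ nor the property of being flat, so I may and do assume $\og=g+h$ with $h\in\ker\delta_g$.

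I would then record two facts about a flat metric $g$. First, $L_g=0$: as $L$ is natural and homogeneous, the structure theory for natural scalar invariants presents $L_g$ as a linear combination of complete contractions of $\Rm$, $\nabla\Rm$, $\nabla^2\Rm,\dots$, with no weight-zero (constant) term by homogeneity (a nonzero such term would be incompatible with the equality case of~\eqref{eqn:intro_rigidity} on a flat torus), so every summand carries at least one curvature factor and $L_g$ vanishes wherever $\Rm\equiv0$. Second, flat metrics are critical points of $g\mapsto\int_M L_g\,\dvol_g$: for the same structural reason the linearization $DL_g$ at a flat metric is a constant-coefficient operator placing at least one derivative on its argument, so $DL_g(h)$ is a divergence and $\int_M DL_g(h)\,\dvol_g=0$. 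Now put $\mH(h):=\int_M L_{g+h}\,\dvol_g$ for $h\in\ker\delta_g$ small. The two facts give $\mH(0)=0$ and $D\mH(0)=0$; and since $\dvol_g$ is held fixed, $D^2\mH(0)[h,h]=\int_M\partial_t^2|_{t=0}L_{g+th}\,\dvol_g$, which is exactly the quantity in~\eqref{eqn:intro_rigidity}. Hence $D^2\mH(0)$ is negative semidefinite on $\ker\delta_g$, vanishing on $h$ precisely when $h$ is parallel, and (working in a Sobolev topology where $\mH$ is smooth) $\mH(h)=\tfrac{1}{2}D^2\mH(0)[h,h]+o(\|h\|^2)$.

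The core of the argument is a coercivity estimate. Let $\mK\subset\ker\delta_g$ be the finite-dimensional space of parallel symmetric $2$-tensors; for $h_0\in\mK$ small, $g+h_0$ is again flat, and a short computation using $\im\delta_g^\ast\cap\ker\delta_g=\{0\}$ shows that the flat metrics in the slice near $g$ are exactly $\{g+h_0:h_0\in\mK\}$, an affine subspace $\mF_0$ with $T_g\mF_0=\mK$. Since $D^2\mH(0)$ is negative semidefinite with $\mK$ as its radical, $D^2\mH(0)[h_0,\cdot]=0$ for $h_0\in\mK$; the claim is that it is moreover coercive on the complement, $D^2\mH(0)[k,k]\le-c\|k\|^2$ for $k\perp\mK$ in $\ker\delta_g$. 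Granting this, suppose $L_{\og}\ge0$ with $\og$ close to $g$, and let $g'=g+h_0\in\mF_0$ be the flat metric nearest $\og$, so $\og=g'+k$ with $\|k\|=\operatorname{dist}(\og,\mF_0)$ and $k\perp\mK$. Re-slicing at $g'$---which modifies $k$ only by terms of order $\|h_0\|\,\|k\|$---produces $k'$ with $\delta_{g'}k'=0$, $k'$ nearly orthogonal to the parallel tensors of $g'$, and $g'+k'$ isometric to $\og$; applying the estimate at the flat metric $g'$ then gives
\[ 0\le\int_M L_{g'+k'}\,\dvol_{g'}=\mH_{g'}(k')=\tfrac{1}{2}D^2\mH_{g'}(0)[k',k']+o(\|k'\|^2)\le-\tfrac{c}{2}\|k'\|^2+o(\|k'\|^2), \]
which forces $k'=0$ once $\og$ is close enough to $g$. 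Hence $\og$ is isometric to $g'$, and therefore flat.

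I expect the coercivity estimate to be the main obstacle. Hypothesis~\eqref{eqn:intro_rigidity} delivers only negative semidefiniteness of $D^2\mH(0)$ together with the identification of its radical as $\mK$; promoting this to a uniform bound on the orthogonal complement seems to require exploiting that on a flat manifold $D^2\mH(0)$ is induced by a constant-coefficient formally self-adjoint operator on $S^2T^\ast M$ whose kernel is the finite-dimensional space $\mK$, so that an elliptic-estimate or spectral-gap argument applies. A secondary, routine point is the uniformity of the constant $c$ and of the re-slicing bounds over the relatively compact family of flat metrics near $g$, and the standard bookkeeping showing that all three preliminary steps (vanishing of $L$, criticality, and the second-variation formula) hold for every flat metric with constants controlled near $g$.
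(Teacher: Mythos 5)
Your overall architecture is the same as the paper's (Ebin's slice theorem, the functional $\mF(\og)=\int_M L_{\og}\,\dvol_g$ with $\dvol_g$ frozen, vanishing of $L$ and of the first variation at a flat metric, splitting off the parallel part of $h$, and a second-order Taylor argument), but the heart of the proof is exactly the step you leave open, and the abstract route you sketch for it does not follow from the hypotheses. The theorem assumes no ellipticity of $L$, and negative semidefiniteness of $D^2\mH(0)$ on $\ker\delta_g$ with radical equal to the parallel tensors does not, in infinite dimensions, upgrade to a uniform bound $D^2\mH(0)[k,k]\leq -c\|k\|^2$ on the orthogonal complement: for that you would need the associated operator (restricted to $\ker\delta_g$) to have discrete spectrum away from $0$, which is not given and is not a consequence of naturality alone at the level of soft functional analysis. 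The paper proves the needed gap (Lemma~\ref{lem:flat_spectrum}) structurally, via Weyl's invariant theory: at a flat metric, modulo divergences, $D^2L[h,h]$ is a linear combination of complete contractions of $\nabla^kh\otimes\nabla^kh$, and for divergence-free $h$ only $\lv\nabla^kh\rv^2$ and $\lv\nabla^k\tr h\rv^2$ survive, so that $\int_M D^2L[h,h]\,\dvol=A\int\lv\nabla^kh\rv^2+B\int\lv\nabla^k\tr h\rv^2$; testing with transverse-traceless tensors and with tensors of the form $\nabla^2f-\tfrac1n\Delta f\,g+\tfrac1n\Upsilon\,g$ and invoking \eqref{eqn:intro_rigidity} forces $A<0$ and $\tfrac{A}{n-1}+B<0$, whence $\int D^2L[h,h]\leq -C\int\lv\nabla^kh\rv^2$. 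This explicit two-term structure is the key idea, and your proposal does not supply a substitute for it.

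Even granting some coercivity, your Taylor step has a norm mismatch that prevents the conclusion. Your gap is stated in $L^2$ (or at best a fixed Hilbert norm), while the remainder $o(\|h\|^2)$ is measured in whatever Sobolev or $C^{2k}$ topology makes $\mH$ twice differentiable, which is strictly stronger; from $0\leq -\tfrac{c}{2}\|k\|_{L^2}^2+o(\|k\|_{H^s}^2)$ one cannot conclude $k=0$. The paper closes this by matching norms: the gap is in $\int\lv\nabla^kh\rv^2$, and the third derivative of $t\mapsto\mF(g+th)$ is estimated, again by naturality and integration by parts so that one copy of $h$ is undifferentiated, by $C_1\|h\|_{C^0}\sum_{j\leq k}\int\lv\nabla^jh\rv^2$, which a Poincar\'e inequality for tensors (valid because $h$ has no parallel component) reduces to $C_2\|h\|_{C^0}\int\lv\nabla^kh\rv^2$; then $0\leq\mF(\varphi^\ast\og)\leq-\left(C-C_2\|h\|_{C^0}\right)\int\lv\nabla^kh\rv^2$ yields $\nabla^kh=0$, hence $h$ parallel, hence $h=0$. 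Without controlling the remainder by the same quantity appearing in the coercive term, the final contradiction does not close. (A minor point: your re-slicing at the nearby flat metric $g'$ is unnecessary; since $h^{||}$ is parallel, $g+h^{||}$ is itself flat and can simply be taken as the new background, as the paper does.)
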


When $L$ is a CVI of weight $-2k$, the condition that~\eqref{eqn:intro_rigidity} holds at all flat manifolds implies that $L$ must be a nonzero multiple of the $Q$-curvature of order $2k$, up to the addition of a CVI of weight $-2k$ of order at most $2k-2$ in the metric; see Lemma~\ref{lem:rigid_implies_linear_term} for details.  Remark~\ref{rk:rigid_requires_cvi} gives an example of an invariant which satisfies the hypotheses of Theorem~\ref{thm:intro_rigidity} but is not a CVI.

The crucial observation is that the assumptions of Theorem~\ref{thm:intro_rigidity} are enough to imply the following spectral gap: There is a constant $C>0$ such that
\[ \int_M \left.\frac{\partial^2}{\partial t^2}\right|_{t=0}\left(L_{g+th}\right)\dvol_g \leq -C\int_M \lv\nabla^k h\rv^2 \]
for all $h\in\ker\delta$, where $-2k$ is the weight of $L$ and $C>0$.

The results of this article depend only on being able to compute the first and second conformal variations of natural Riemannian scalar invariants and, in the case of Section~\ref{sec:rigidity}, the second metric variation at flat metrics.  These computations are all straightforward.  Moreover, in many cases of interest --- for example, at Einstein metrics --- these computations are drastically simplified.  We illustrate this in Section~\ref{sec:weight} via a thorough discussion of examples through two perspectives.  First, we recall the definitions of the renormalized volume coefficients and the $Q$-curvatures via Poincar\'e metrics~\cite{FeffermanGraham2012} and recall the fact that these invariants are all variationally stable at Einstein metrics with positive scalar curvature (see also~\cite{ChangFangGraham2012}).  Second, we give bases for the spaces of CVIs of weight $-2$, $-4$, and $-6$ with enough detail to check which are variationally stable at Einstein metrics with positive scalar curvature.  These examples show the wide applicability of our results to variational problems in Riemannian geometry.

This article is organized as follows:

In Section~\ref{sec:bg}, we make precise the notion of a CVI and describe an algorithm due to Branson~\cite{Branson1995} for finding a basis of CVIs using Weyl's invariant theory.

In Section~\ref{sec:schur}, we use the standard conformal primitive of a CVI to prove Theorem~\ref{thm:intro_almost_schur} and related results.

In Section~\ref{sec:second_derivative}, we introduce the notion of variational stability for a CVI $L$ and explain its importance for the total $L$-curvature functional.  This includes a discussion of the role of the space of conformal Killing fields in the study of variational properties of CVIs.

In Section~\ref{sec:singular}, we introduce the notion of ellipticity for a CVI $L$ and prove a number of results for elliptic, variationally stable CVIs, including Theorem~\ref{thm:intro_corollary}, Theorem~\ref{thm:intro_Lconst} and Theorem~\ref{thm:intro_nontrivial-Lsingular}.

In Section~\ref{sec:critical}, we describe the variational significance of the solutions of $\Gamma^\ast (f)=\lambda g$ for a CVI $L$.  In particular, we prove the aforementioned generalization of Theorem~\ref{thm:intro_corollary}.

In Section~\ref{sec:rigidity}, we study CVIs in neighborhoods of flat metrics.  In particular, we prove Theorem~\ref{thm:intro_rigidity}.

In Section~\ref{sec:weight}, we discuss a number of examples of CVIs.  In Subsection~\ref{subsec:std} we describe the renormalized volume coefficients and the $Q$-curvatures of all orders.  In Subsection~\ref{subsec:weight2} we show how our methods recover the well-known facts that the scalar curvature is elliptic and variationally stable at Einstein metrics.  In Subsection~\ref{subsec:weight4} we give a basis for the space of CVIs of weight $-4$ and describe the cones of CVIs which are elliptic and variationally stable at Einstein metrics with positive scalar curvature.  In Subsection~\ref{subsec:weight6} we give a basis for the space of CVIs of weight $-6$ and check which basis elements are elliptic and variationally stable at Einstein metrics with positive scalar curvature.

\section{Conformally variational invariants}
\label{sec:bg}

Let $M^n$ be a smooth manifold and let $\kM$ denote the space of Riemannian metrics on $M$.  For each $g\in\kM$, we identify the formal tangent space $T_g\kM$ of $\kM$ at $g$ with $S^2T^\ast M$ by identifying $h\in S^2T^\ast M$ with the tangent vector to the curve $t\mapsto g+th$ at $t=0$.  Using this identification, we equip $T_g\kM$ with the inner product
\[ \llp h_1,h_2\rrp = \int_M \lp h_1,h_2\rp_g\,\dvol_g \]
for $h_1,h_2\in T_g\kM$, where $\lp\cdot,\cdot\rp_g$ and $\dvol_g$ are the inner product on $S^2T^\ast M$ and the Riemannian volume element, respectively, induced by $g$.

A \emph{conformal class} is an equivalence class $\kC\subset\kM$ of metrics with respect to the equivalence relation $g_1\sim g_2$ if and only if $g_2=e^{2\Upsilon}g_1$ for some $\Upsilon\in C^\infty(M)$.  We regard $T_g\kC\cong C^\infty(M)$ by identifying $\Upsilon\in C^\infty(M)$ with the tangent vector to the curve $t\mapsto e^{2t\Upsilon}g$ at $t=0$.  The inner product on $T_g\kM$ induces the inner product
\[ \llp \Upsilon_1,\Upsilon_2 \rrp = \int_M \Upsilon_1\Upsilon_2\,\dvol_g \]
for all $\Upsilon_1,\Upsilon_2\in T_g\kC$.

A \emph{natural Riemannian scalar invariant} is a function $L\colon\kM\to C^\infty(M)$ which is both polynomial in the finite jets of $g$ and is invariant under the action of the diffeomorphism group $\Diff(M)$ of $M$ on $\kM$:
\[ L(\phi^\ast g) = \phi^\ast L(g) \]
for all $g\in\kM$ and all $\phi\in\Diff(M)$.  This implies that $L$ is a polynomial in $g$, $g^{-1}$, and the covariant derivatives of the Riemann curvature tensor.  Similarly, a \emph{Riemannian functional} is a function $\mS\colon\kM\to\bR$ which is invariant under the action of $\Diff(M)$.  Also, a \emph{natural Riemannian operator} is both invariant under the action of $\Diff(M)$ and a polynomial in $g$, $g^{-1}$, the Riemann curvature tensor, and the Levi-Civita connection.  We also use the notation $L_g$ to denote the scalar invariant $L(g)$; when the metric $g$ is clear by context, we will simply write $L$ for $L(g)$.

A natural Riemannian scalar invariant $L$ is \emph{homogeneous of degree $-2k$} if
\[ L(c^2g) = c^{-2k} L(g) \]
for all $g\in\kM$ and all $c>0$.  For example, the scalar curvature is homogeneous of degree $-2$.

In terms of the identification $T_g\kM\cong S^2T^\ast M$, the \emph{metric linearization} of the natural Riemannian scalar invariant $L$ at $g\in\kM$ is
\[ DL[h] := \left.\frac{\partial}{\partial t}\right|_{t=0} L(g+th) . \]
Naturality enables us to evaluate the metric linearization $DL$ in the direction of a Lie derivative:

\begin{lem}
 \label{lem:dL_Lie_derivative}
 Let $L$ be a natural Riemannian scalar invariant.  On any Riemannian manifold $(M^n,g)$, it holds that
 \begin{equation}
  \label{eqn:dL_Lie_derivative}
  DL[\mL_Xg] = dL(X)
 \end{equation}
 for all vector fields $X$ on $M$, where $\mL_X$ denotes the Lie derivative.
\end{lem}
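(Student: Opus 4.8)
The plan is to reduce the identity $DL[\mL_Xg] = dL(X)$ to the infinitesimal form of diffeomorphism invariance. The key observation is that naturality, $L(\phi^\ast g) = \phi^\ast L(g)$, holds for \emph{all} diffeomorphisms $\phi$, so in particular it holds for the flow of a vector field. First I would fix a vector field $X$ on $M$ and let $\phi_t$ denote its (local) flow, which for compactly supported $X$ exists for all $t$ and for general $X$ exists locally in $t$ near any point; since the identity~\eqref{eqn:dL_Lie_derivative} is pointwise, it suffices to work on a neighborhood of an arbitrary point, so the flow is available. Applying naturality with $\phi = \phi_t$ gives $L(\phi_t^\ast g) = \phi_t^\ast\bigl(L(g)\bigr)$ for all $t$ in the relevant interval.

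Next I would differentiate both sides of $L(\phi_t^\ast g) = \phi_t^\ast\bigl(L(g)\bigr)$ at $t=0$. On the right-hand side, $\phi_t^\ast$ acts on the scalar function $L(g)$, and $\left.\frac{d}{dt}\right|_{t=0}\phi_t^\ast\bigl(L(g)\bigr) = \mL_X\bigl(L(g)\bigr) = dL(X)$, using the elementary fact that the Lie derivative of a function along $X$ is its directional derivative $dL(X)$ (here $dL$ means $d\bigl(L(g)\bigr)$, the exterior derivative of the scalar $L(g)$). On the left-hand side I would use the chain rule: the curve $t\mapsto \phi_t^\ast g$ in $\kM$ passes through $g$ at $t=0$ with velocity $\left.\frac{d}{dt}\right|_{t=0}\phi_t^\ast g = \mL_Xg \in S^2T^\ast M$, so by the definition of the metric linearization, $\left.\frac{d}{dt}\right|_{t=0}L(\phi_t^\ast g) = DL[\mL_Xg]$. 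Equating the two derivatives yields~\eqref{eqn:dL_Lie_derivative}.

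The one point requiring a little care — and what I expect to be the main (minor) obstacle — is justifying the chain rule step $\left.\frac{d}{dt}\right|_{t=0}L(\phi_t^\ast g) = DL[\mL_Xg]$, i.e.\ that differentiating $L$ along the nonlinear curve $t\mapsto\phi_t^\ast g$ gives the same thing as its linearization applied to the initial velocity. This is immediate once one knows $L$ is Gateaux differentiable along arbitrary smooth curves in $\kM$ with derivative computed by $DL$, which follows from $L$ being polynomial in the finite jets of $g$ (hence smooth as a map between the relevant Fréchet spaces, or more concretely: $L(\phi_t^\ast g)$ depends polynomially on finitely many derivatives of the components of $\phi_t^\ast g$, each of which is a smooth function of $t$, so $t\mapsto L(\phi_t^\ast g)(p)$ is smooth for each $p$ and its $t$-derivative at $0$ is obtained by the ordinary multivariable chain rule, which reorganizes exactly into $DL[\mL_Xg]$). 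Everything else is a direct transcription of the definitions, so the proof is short.
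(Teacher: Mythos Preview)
Your proposal is correct and follows essentially the same argument as the paper: take the flow $\phi_t$ of $X$, use naturality to write $L(\phi_t^\ast g)=\phi_t^\ast L(g)$, and differentiate at $t=0$. The paper's proof is a two-line version of exactly this; your additional remarks on locality of the flow and the chain rule for $L$ polynomial in the jets of $g$ are fine justifications that the paper leaves implicit.
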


\begin{proof}
 Let $X$ be a smooth vector field on $M$ and let $\psi_t\colon M\to M$ be a one-parameter family of diffeomorphisms such that $\psi_0$ is the identity and $\left.\frac{\partial}{\partial t}\right|_{t=0}\psi_t=X$.  Then
 \[ DL[\mL_Xg] = \left.\frac{\partial}{\partial t}\right|_{t=0}L(\psi_t^\ast g) = \left.\frac{\partial}{\partial t}\right|_{t=0}\psi_t^\ast L = dL(X) . \qedhere \]
\end{proof}

In terms of the identification $T_g\kC\cong C^\infty(M)$, the \emph{conformal linearization} of the natural Riemannian scalar invariant $L$ at $g\in\kC$ is
\[ DL(\Upsilon) := \left.\frac{\partial}{\partial t}\right|_{t=0} L(e^{2t\Upsilon}g) . \]
Note that, in terms of the metric linearization of $L$,
\begin{equation}
 \label{eqn:relate_derivatives}
 DL(\Upsilon) = DL[2\Upsilon g] .
\end{equation}
It is for this reason we have opted to use such similar notation for the metric linearization and the conformal linearization.  We shall differentiate between the two linearizations by always using square (resp.\ round) brackets and Latin (resp.\ Greek) characters for the metric (resp.\ conformal) linearization of a natural Riemannian invariant.

A natural Riemannian scalar invariant $L$ is \emph{conformally variational in a conformal class $\kC\subset\kM$} if there is a Riemannian functional $\mS\colon\kM\to\bR$ such that
\begin{equation}
 \label{eqn:conformally_variational_definition}
 \left.\frac{d}{dt}\right|_{t=0} \mS\left(e^{2t\Upsilon}g\right) = \int_M L\Upsilon\,\dvol_g
\end{equation}
for all $g\in\kC$ and all $\Upsilon\in C^\infty(M)$, where $\dvol_g$ denotes the Riemannian volume element determined by $g$.  We say that $L$ is \emph{conformally variational} if it is conformally variational in all conformal classes on which it is defined; note that some invariants (e.g.\ the $Q$-curvatures~\cite{GoverHirachi2004,Graham1992}) are only defined in sufficiently large dimensions.

Our definitions of the metric and the conformal linearizations of a natural Riemannian scalar invariant readily extend to Riemannian functionals.  In this way, we write~\eqref{eqn:conformally_variational_definition} as
\[ D\mS(\Upsilon) = \llp L, \Upsilon\rrp; \]
i.e.\ conformally variational Riemannian scalar invariants are gradients of natural Riemannian functionals.  Indeed, a natural Riemannian scalar invariant $L$ is conformally variational if and only if its conformal linearization is formally self-adjoint with respect to $\llp\cdot,\cdot\rrp$; see~\cite{BransonGover2008}.  In particular, a natural Riemannian invariant which is homogeneous of degree $-2k$ is conformally variational if and only if there is a natural differential operator $T\colon\Omega^1M\to\Omega^1M$ which is symmetric on closed forms and such that
\begin{equation}
 \label{eqn:cvi_linearization}
 DL(\Upsilon) = -2kL\Upsilon - \delta\left(T(d\Upsilon)\right)
\end{equation}
for all $\Upsilon\in T_g\kC$ and all $g\in\kC$, where $\delta\omega=\tr\nabla\omega$ is the divergence operator on one-forms.

A \emph{conformally variational invariant (CVI)} is a conformally variational Riemannian scalar invariant which is homogeneous.  The \emph{weight} of a CVI is its degree of homogeneity.

For the remainder of this article, we reserve special notation for three objects related to the linearization of a CVI $L$.  First, we denote by $\Gamma\colon T_g\kM\to C^\infty(M)$ the linearization of $L$ at $g\in\kM$.  Second, we denote by $\Gamma^\ast\colon C^\infty(M)\to T_g\kM$ the adjoint of $\Gamma$ with respect to the inner product $\llp\cdot,\cdot\rrp$ on $T_g\kM$:
\[ \int_M \lp \Gamma^\ast(f), h\rp_g\,\dvol_g = \int_M f\,\Gamma[h]\,\dvol_g \]
for all $f\in C^\infty(M)$ and all $h\in S^2T^\ast M$.  Third, we denote by $S\in S^2T^\ast M$ the associated symmetric tensor $S=-\Gamma^\ast(1)$.

One immediate consequence of~\eqref{eqn:cvi_linearization} is that for any CVI of weight $-2k$ on a closed $n$-manifold, the functional $\mS(g):=\int L_g\,\dvol_g$ is such that
\begin{equation}
 \label{eqn:first_conformal_derivative}
 D\mS(\Upsilon) = (n-2k)\int_M L_g\Upsilon\,\dvol_g .
\end{equation}
In particular, if $n\not=2k$, then $L$ is the conformal gradient of $\frac{1}{n-2k}\mS$.  On the other hand, since $D\dvol_g[h]=\frac{1}{2}\tr_g h$, we see that
\begin{equation}
 \label{eqn:first_derivative}
 D\mS[h] = - \leftllp S - \frac{1}{2}Lg, h \rightrrp .
\end{equation}
In other words, $S-\frac{1}{2}Lg$ is the gradient of $\mS$.  This observation facilitates a close comparison between our approach to the study of conformally variational Riemannian invariants and the approach taken by Gover and {\O}rsted~\cite{GoverOrsted2010}; see Section~\ref{sec:schur} and Remark~\ref{rk:kazdan_warner} for further comments on this comparison.

\begin{remark}
 If $M^n$ is a closed manifold and $L$ is a CVI of weight $-n$, then the functional $g\mapsto\int L_g\dvol_g$ is invariant in conformal classes.  Instead, one can realize $L$ as a conformal gradient in the following way (cf.\ \cite{BrendleViaclovsky2004}): Let $\kC$ be a conformal class on $M$ and fix a background metric $g_0\in\kC$.  Define $\mS\colon\kC\to\bR$ by
 \begin{equation}
  \label{eqn:associated_lagrangian_critical_dimension}
  \mS(g) = \int_0^1 \int_M u\,L(e^{2su}g_0)\dvol_{e^{2su}g_0}\,ds,
 \end{equation}
 where $g=e^{2u}g_0$.  It is straightward to check from~\eqref{eqn:cvi_linearization} that $L$ is the conformal gradient of $\mS$.
\end{remark}

We conclude this section by discussing both an important example of a CVI and a general construction of CVIs.  This construction is carried out explicitly in Section~\ref{sec:weight} to describe the vector spaces of CVIs of weight $-2$, $-4$, and $-6$.

First, the simplest and most studied example of a CVI is the scalar curvature $R$.  It is well-known (e.g.\ \cite[Section~1.J]{Besse}) that
\begin{equation}
 \label{eqn:full_linearization_R}
 DR[h] = -\lp\Ric,h\rp + \delta^2h - \Delta\tr h .
\end{equation}
In particular, we compute that
\begin{equation}
 \label{eqn:conformal_linearization_R}
 DR(\Upsilon) = -2R\Upsilon - 2(n-1)\Delta\Upsilon,
\end{equation}
and hence $R$ is a CVI of weight $-2$.  From~\eqref{eqn:full_linearization_R} we also recover the familiar formula
\[ \Gamma^\ast(f) = -f\Ric + \nabla^2f - \Delta f\,g \]
for the adjoint of the linearization of $R$.  In particular, we see that $S=\Ric$.  The map $\Gamma^\ast$ and the tensor $\Ric$ both play an important role in studying variational problems involving the scalar curvature (cf.\ \cite{Besse,Corvino2000,FischerMarsden1975}).

Second, we restrict our attention to the classes $\mI_{2k}^n$ of CVIs of weight $-2k$ which are defined on any Riemannian manifold of dimension $n\geq 2k$.  This restriction ensures that Riemannian invariants defined via the Fefferman--Graham ambient metric~\cite{FeffermanGraham2012} are well-defined.  By Weyl's invariant theory~\cite{Epstein1975,Stredder1975,Weyl1997}, the vector space $\mR_{2k}^n$ of scalar Riemannian invariants of weight $-2k$ defined on any Riemannian manifold of dimension $n\geq 2k$ is spanned by complete contractions
\begin{equation}
 \label{eqn:invariant_thy_basis}
 \mathrm{contr}\left(\nabla^{r_1}\Rm\otimes\dotsm\otimes\nabla^{r_j}\Rm\right),
\end{equation}
where $r:=\sum r_j$ is even and $r+2j=2k$.  In particular, $\mR_{2k}^n=\{0\}$ unless $k$ is a nonnegative integer, and $\mR_0^n\cong\bR$ consists of the constant functions.  We say that $L$ is a \emph{nontrivial CVI} if it is homogeneous of weight $-2k$, $k\in\bN$.

Consider the case $n>2k$ and let $\im\delta\subset\mR_{2k}^n$ denote the subspace of exact divergences.  Since $n>2k$, we conclude from~\eqref{eqn:first_conformal_derivative} that $\mR_{2k}^n/\im\delta$ is isomorphic to $\mI_{2k}^n$.  Indeed, the map $\Phi\colon\mI_{2k}^n\to\mR_{2k}^n/\im\delta$ given by $\Phi(L)=[L]$ is an isomorphism with inverse $\Phi^{-1}([L])=L^\prime$, where $L^\prime$ is the conformal gradient of the functional $g\mapsto\frac{1}{n-2k}\int L$.  This provides an explicit method to identify $\mI_{2k}^n$ from a basis of $\mR_{2k}^n/\im\delta$ on Riemannian $n$-manifolds with $n>2k$.

To handle the case $n=2k$, recall that Gilkey showed~\cite{Gilkey1984} that $\mR_{2k}^{2k}$ is isomorphic to $\mR_{2k}^n$ for all $n\geq 2k$, with the restriction map induced by the inclusion of $M^{2k}$ in the Riemannian product $M^{2k}\times T^{n-2k}$ with a flat torus as an explicit isometry.  Thus, by choosing a suitable basis for $\mR_{2k}^n/\im\delta$ above, one can analytically continue in the dimension to identify a basis for $\mI_{2k}^{2k}$; see~\cite{Branson1995} for further discussion.  For our purposes, where we only give explicit bases for $\mI_{2k}^n$, $k\in\{1,2,3\}$ and $n\geq 2k$, it is simple to directly check that this argument provides a basis for $\mI_{2k}^{2k}$ without a precise formulation of analytic continuation in the dimension.  These bases are derived in Section~\ref{sec:weight}.

\section{Schur-type results for CVIs}
\label{sec:schur}

Two well-known relationships between the Ricci curvature $\Ric$ and the scalar curvature $R$ are the trace identity $\tr\Ric=R$ and the consequence $\delta\Ric=\frac{1}{2}dR$ of the second Bianchi identity.  These readily yield Schur's lemma: If $n\geq3$ and $(M^n,g)$ is such that $\Ric=\lambda g$, then $\lambda$ is constant.  More recently, De Lellis and Topping~\cite{DeLellisTopping2012} proved an $L^2$-version of this result under the name ``almost Schur lemma'' for manifolds with nonnegative Ricci curvature; see~\cite{GeWang2012a,GeWang2012b} and~\cite{Kwong2015} for analogues for manifolds with nonnegative scalar curvature and Ricci curvature bounded below, respectively.  The almost Schur lemma was generalized to the fourth-order $Q$-curvature and its associated tensor $S$ by the second- and third-named authors~\cite{LinYuan2015b}.  In this section we show that their result generalizes to any CVI; note that the observations leading to the analogue of Schur's lemma have already been pointed out by Gover and {\O}rsted~\cite{GoverOrsted2010}.

We begin with two basic observations relating a CVI $L$ to the trace and divergence of the associated operator $\Gamma^\ast$ (see~Section~\ref{sec:bg}).

\begin{lem}
 \label{lem:trGamma}
 Let $L$ be a CVI of weight $-2k$.  Then
 \[ \tr \Gamma^\ast(f) = -kLf - \frac{1}{2}\delta\left(T(df)\right) \]
 for all $f\in C^\infty(M)$, where $T$ is as in~\eqref{eqn:cvi_linearization}.
\end{lem}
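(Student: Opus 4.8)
The plan is to compute the trace of $\Gamma^\ast(f)$ by pairing it against $fg$ and reading off the left-hand side via the duality defining $\Gamma^\ast$, then exploiting the relation between the metric and conformal linearizations. Concretely, for any $f\in C^\infty(M)$ we have
\[ \int_M \lp \Gamma^\ast(f), fg\rp_g\,\dvol_g = \int_M f\,\Gamma[fg]\,\dvol_g, \]
but I would rather test against $\Upsilon g$ for an arbitrary $\Upsilon$ so as to extract the trace as a pointwise identity. So first I would note that for any $\Upsilon$, $\lp\Gamma^\ast(f),\Upsilon g\rp_g = \Upsilon\,\tr_g\Gamma^\ast(f)$, hence
\[ \int_M \Upsilon\,\tr_g\Gamma^\ast(f)\,\dvol_g = \int_M f\,\Gamma[\Upsilon g]\,\dvol_g. \]

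Next I would invoke~\eqref{eqn:relate_derivatives}, which says $\Gamma[\Upsilon g] = DL[\Upsilon g] = \tfrac12 DL(2\Upsilon) $... more precisely $DL(\Upsilon) = DL[2\Upsilon g]$, so $\Gamma[\Upsilon g] = DL[\Upsilon g] = \tfrac12 DL(2\Upsilon g) $ — let me be careful: $\Gamma[\Upsilon g] = \tfrac12 DL(\Upsilon)$ is wrong dimensionally; the correct substitution is that since $DL(\Upsilon) = DL[2\Upsilon g]$ and $DL[\cdot]$ is linear, $DL[\Upsilon g] = \tfrac12 DL(\Upsilon)$. Then using the CVI linearization formula~\eqref{eqn:cvi_linearization}, $DL(\Upsilon) = -2kL\Upsilon - \delta(T(d\Upsilon))$, I get
\[ \int_M \Upsilon\,\tr_g\Gamma^\ast(f)\,\dvol_g = \int_M f\left(-kL\Upsilon - \tfrac12\delta(T(d\Upsilon))\right)\dvol_g. \]
Now I would integrate by parts on the divergence term to move the operator off $\Upsilon$: using that $T$ is a natural differential operator symmetric on closed forms (together with $\delta$ being the formal adjoint of $-d$), the term $\int_M f\,\delta(T(d\Upsilon))\,\dvol_g$ transforms into $\int_M \Upsilon\,\delta(T(df))\,\dvol_g$. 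This is where I must be slightly careful: I would write $\int_M f\,\delta(T(d\Upsilon))\,\dvol_g = -\int_M \lp df, T(d\Upsilon)\rp\,\dvol_g = -\int_M \lp T(df), d\Upsilon\rp\,\dvol_g = \int_M \Upsilon\,\delta(T(df))\,\dvol_g$, where the middle equality uses the symmetry of $T$ on closed forms (both $df$ and $d\Upsilon$ are exact).

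Combining, $\int_M \Upsilon\bigl(\tr_g\Gamma^\ast(f) + kLf + \tfrac12\delta(T(df))\bigr)\dvol_g = 0$ for all $\Upsilon\in C^\infty(M)$, which forces the pointwise identity $\tr\Gamma^\ast(f) = -kLf - \tfrac12\delta(T(df))$, as claimed. The main obstacle — really the only subtle point — is justifying the integration by parts using the symmetry of $T$ on closed forms rather than full self-adjointness of $T$, and making sure the identity is valid pointwise (not just in the integrated sense) on an arbitrary, possibly noncompact, manifold; this is handled by noting $\Gamma^\ast(f)$ and the right-hand side are both natural local expressions, so it suffices to verify the integrated identity against all compactly supported $\Upsilon$, which the above computation does.
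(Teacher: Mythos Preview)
Your proposal is correct and follows essentially the same route as the paper's proof: pair $\Gamma^\ast(f)$ against $\Upsilon g$ for compactly supported $\Upsilon$, use $\Gamma[\Upsilon g]=\tfrac12 DL(\Upsilon)$, and then move the operator from $\Upsilon$ to $f$. The only cosmetic difference is that the paper invokes the self-adjointness of $DL$ in one line ($\llp f, DL(\Upsilon)\rrp=\llp \Upsilon, DL(f)\rrp$) and then reads off the formula for $DL(f)$, whereas you unpack that self-adjointness explicitly via the symmetry of $T$ on closed forms and integration by parts; these are the same step presented at different levels of detail.
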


\begin{proof}
 Let $f,u\in C^\infty(M)$ with $u$ compactly supported.  We compute that
 \[ \llp \Gamma^\ast(f), ug\rrp = \llp f, \Gamma[ug]\rrp = \frac{1}{2}\llp f, DL(u)\rrp = \frac{1}{2}\llp u, DL(f)\rrp , \]
 where the last identity follows from the fact that $DL$ is self-adjoint.  The conclusion now follows from the expression~\eqref{eqn:cvi_linearization} for the conformal linearization of a CVI.
\end{proof}

\begin{lem}
 \label{lem:divGamma}
 Let $L$ be a CVI.  Then
 \[ \delta\Gamma^\ast(f) = -\frac{1}{2}f\,dL \]
 for all $f\in C^\infty(M)$.
\end{lem}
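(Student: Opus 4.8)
The plan is to exploit the naturality of $L$ exactly as in Lemma~\ref{lem:dL_Lie_derivative}, but now transported through the adjoint $\Gamma^\ast$. The key identity to use is $DL[\mL_Xg] = dL(X)$, which holds for all vector fields $X$. The point is that $\mL_Xg = 2\,\delta^\ast(X^\flat)$ (twice the symmetrized covariant derivative of $X^\flat$), so the left-hand side $DL[\mL_Xg]$ is $\Gamma$ applied to something in the image of $\delta^\ast$, and pairing against an arbitrary $f\in C^\infty(M)$ will let me move $\Gamma$ across to $\Gamma^\ast$ and then integrate by parts to produce $\delta\Gamma^\ast(f)$.

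Concretely, I would take $f\in C^\infty(M)$ and an arbitrary compactly supported vector field $X$, and compute
\[
  \llp f, DL[\mL_X g]\rrp = \llp f, \Gamma[\mL_X g]\rrp = \llp \Gamma^\ast(f), \mL_X g\rrp = \llp \Gamma^\ast(f), 2\,\delta^\ast(X^\flat)\rrp = 2\llp \delta\Gamma^\ast(f), X^\flat\rrp,
\]
using that $\delta$ (acting on symmetric $2$-tensors) is, up to the usual sign convention, the formal adjoint of $\delta^\ast$. On the other hand, by Lemma~\ref{lem:dL_Lie_derivative}, $DL[\mL_Xg] = dL(X) = \llp dL, X^\flat\rrp$ pointwise, so
\[
  \llp f, DL[\mL_X g]\rrp = \int_M f\, dL(X)\,\dvol_g = \llp f\,dL, X^\flat\rrp.
\]
Comparing the two expressions gives $\llp 2\,\delta\Gamma^\ast(f) - f\,dL, X^\flat\rrp = 0$ for all compactly supported $X$, hence $\delta\Gamma^\ast(f) = \frac{1}{2} f\,dL$ or $-\frac{1}{2}f\,dL$ depending on the sign convention for $\delta$; with the convention $\delta\omega = \tr\nabla\omega$ used in the paper (so that $\delta = -\divsymb$ on the relevant tensors) one lands on the stated $-\frac{1}{2}f\,dL$.

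The only real subtlety — and the step I would be most careful about — is bookkeeping the sign conventions: the paper defines $\delta$ on one-forms as $\tr\nabla$, and $\delta^\ast = \frac12\mL_\cdot g$ differs from the "geometer's divergence" by a sign, so one must track whether the adjoint pairing $\llp \delta S, \omega\rrp = -\llp S,\delta^\ast\omega\rrp$ or $= +\llp S,\delta^\ast\omega\rrp$. Everything else is a one-line manipulation: the self-adjointness of $\Gamma$ relative to $\llp\cdot,\cdot\rrp$ (which is exactly the defining property of a CVI, via the definition of $\Gamma^\ast$), the identity $\mL_Xg = 2\delta^\ast(X^\flat)$, and Lemma~\ref{lem:dL_Lie_derivative}. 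No weight or homogeneity hypothesis is actually needed for this lemma — only naturality — which is consistent with the statement being phrased merely for "a CVI" rather than one of fixed weight. I would present the argument in the integrated (weak) form above and then note that, since $X$ is arbitrary, the pointwise identity follows; alternatively one could give a purely pointwise proof by differentiating $L(\psi_t^\ast g)$ and using $\Gamma^\ast$ directly, but the weak formulation is cleaner.
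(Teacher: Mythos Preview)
Your approach is essentially identical to the paper's: both pair against an arbitrary compactly supported vector field $X$, invoke Lemma~\ref{lem:dL_Lie_derivative}, and integrate by parts via the adjoint relation between $\delta$ on symmetric $2$-tensors and the Lie derivative. The paper sidesteps your sign hedging by stating that relation directly as $\llp X,\delta S\rrp = -\tfrac{1}{2}\llp\mL_Xg,S\rrp$ (note that the paper's $\delta=\tr\nabla$ \emph{is} the ordinary divergence, not its negative, so your parenthetical is off), from which the factor $-\tfrac{1}{2}f\,dL$ drops out immediately.
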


\begin{proof}
 Let $X$ be a compactly-supported smooth vector field on $M$.  Since the formal adjoint of the divergence $\delta$ is $-\frac{1}{2}\mL_Xg$, we observe that
 \begin{multline*}
  \int_M\lp X,\delta\Gamma^\ast(f)\rp,\dvol = -\frac{1}{2}\int_M\lp\mL_Xg,\Gamma^\ast (f)\rp,\dvol \\ = -\frac{1}{2}\int_M f\,\Gamma[\mL_Xg],\dvol = -\frac{1}{2}\int_M f\,dL(X),\dvol ,
 \end{multline*}
 where the final equality uses~\eqref{eqn:dL_Lie_derivative}.
\end{proof}

Lemma~\ref{lem:trGamma} and Lemma~\ref{lem:divGamma} immediately yield the following analogue of Schur's Lemma for CVIs.

\begin{prop}
 \label{prop:schur}
 Let $L$ be a CVI of weight $-2k$.  If $(M^n,g)$ is a connected Riemannian manifold such that $n\not=2k$ and $S=\lambda g$ for some function $\lambda\in C^\infty(M)$, then $\lambda$ and $L$ are both constant.
\end{prop}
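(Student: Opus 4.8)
The plan is to deduce Proposition~\ref{prop:schur} directly from the two preceding lemmas by the same two-step argument that proves the classical Schur lemma: first use the trace identity to pin down $\lambda$ as a multiple of $L$, then use the divergence identity together with the second Bianchi-type relation to conclude that $L$ is constant.

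First I would take the trace of the hypothesis $S=\lambda g$, giving $\tr_g S=n\lambda$. On the other hand, by definition $S=-\Gamma^\ast(1)$, so Lemma~\ref{lem:trGamma} applied with $f=1$ yields $\tr S = -\tr\Gamma^\ast(1) = kL + \tfrac{1}{2}\delta(T(d1)) = kL$, since $d1=0$. Hence $n\lambda = kL$, i.e.\ $\lambda = \tfrac{k}{n}L$. Next I would apply the divergence operator to $S=\lambda g$. Since $\delta(\lambda g) = -d\lambda$ (with the sign convention $\delta\omega=\tr\nabla\omega$, so that $\delta(\varphi g)$ is the negative of $d\varphi$; I would double-check this against the sign in the proof of Lemma~\ref{lem:divGamma}), and since $S=-\Gamma^\ast(1)$, Lemma~\ref{lem:divGamma} with $f=1$ gives $\delta S = -\delta\Gamma^\ast(1) = \tfrac{1}{2}dL$. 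Combining, $-d\lambda = \tfrac{1}{2}dL$, and substituting $\lambda=\tfrac{k}{n}L$ gives $-\tfrac{k}{n}dL = \tfrac{1}{2}dL$, i.e.\ $\bigl(\tfrac{k}{n}+\tfrac{1}{2}\bigr)dL = 0$, that is $\tfrac{2k+n}{2n}\,dL = 0$.

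Now $\tfrac{2k+n}{2n}$ is never zero (recall $n\geq 2k\geq 0$ and we are in the nontrivial case $k\in\bN$, so $n\geq 2$ and the coefficient is strictly positive), so $dL=0$; since $M$ is connected, $L$ is constant, and then $\lambda=\tfrac{k}{n}L$ is constant as well. This completes the argument.

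The only genuinely delicate point is bookkeeping with signs and with the convention for $\delta$ on symmetric $2$-tensors versus on $1$-forms: one must be sure that $\delta(\varphi g)$ and $\delta\Gamma^\ast(f)$ are computed with the same divergence operator as in Lemma~\ref{lem:divGamma}, and that the trace normalization $\tr_g g=n$ is used consistently. There is also the mild subtlety of where the hypothesis $n\neq 2k$ is used: here it is not actually needed to conclude, because $\tfrac{n+2k}{2n}>0$ regardless; the hypothesis $n\neq 2k$ in the statement is presumably included for consistency with the rest of the paper (where $n=2k$ is the critical dimension with different behaviour of $\mS$), but the conclusion that $L$ and $\lambda$ are constant follows from the computation above in all dimensions $n\geq 2k$ with $k\geq 1$. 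I would phrase the proof so as to make the coefficient $\tfrac{n+2k}{2n}$ explicit and simply observe it is positive.
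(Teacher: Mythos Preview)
Your overall strategy is the same as the paper's, but there is a sign error in the divergence step that changes the conclusion. With the paper's convention $(\delta T)_j=\nabla^iT_{ij}$ on symmetric $2$-tensors (this is what makes the adjoint of $\delta$ equal to $-\tfrac12\mL_Xg$, as used in the proof of Lemma~\ref{lem:divGamma}), one has
\[
\bigl(\delta(\varphi g)\bigr)_j=\nabla^i(\varphi g_{ij})=\nabla_j\varphi,
\]
so $\delta(\varphi g)=d\varphi$, not $-d\varphi$. Redoing your computation with this sign: from $\delta S=\tfrac12 dL$ and $\delta(\lambda g)=d\lambda$ you get $d\lambda=\tfrac12 dL$; substituting $\lambda=\tfrac{k}{n}L$ yields
\[
\Bigl(\frac{k}{n}-\frac12\Bigr)dL=0,\qquad\text{i.e.}\qquad \frac{2k-n}{2n}\,dL=0.
\]
Thus the coefficient is $\tfrac{n-2k}{2n}$, not $\tfrac{n+2k}{2n}$, and the hypothesis $n\neq 2k$ is genuinely needed to conclude $dL=0$. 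This is exactly the formula $\delta S_0=\tfrac{n-2k}{2n}dL$ that the paper records in~\eqref{eqn:div_S0}. Your puzzlement in the last paragraph about why $n\neq 2k$ appears was in fact a correct warning sign that something had gone wrong.

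Once the sign is fixed, your argument is essentially identical to the paper's: the paper just packages the trace and divergence steps together via the trace-free tensor $S_0=S-\tfrac{k}{n}Lg$, but the content is the same application of Lemma~\ref{lem:trGamma} and Lemma~\ref{lem:divGamma} with $f=1$.
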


\begin{proof}
 Set
 \begin{equation}
  \label{eqn:S0}
  S_0 := S - \frac{k}{n}Lg .
 \end{equation}
 From Lemma~\ref{lem:trGamma}, we conclude that $S_0$ is trace-free.  In particular, $S=\lambda g$ if and only if $S_0=0$.  Applying Lemma~\ref{lem:divGamma}, we compute that
 \begin{equation}
  \label{eqn:div_S0}
  \delta S_0 = \frac{n-2k}{2n}dL .
 \end{equation}
 It follows that if $n\not=2k$ and $S_0=0$, then $dL=0$.  Since $M$ is connected, $L$ is constant.  Since $n\lambda=\tr S=kL$, we see that $\lambda$ is also constant.
\end{proof}

A straightforward adaptation of the proof of De Lellis and Topping~\cite{DeLellisTopping2012} for the scalar curvature yields Theorem~\ref{thm:intro_almost_schur}.  Indeed, we prove the following stronger result:

\begin{thm}
 \label{thm:almost_schur}
 Let $L$ be a CVI of weight $-2k$.  Let $(M^n,g)$ be a compact Riemannian manifold with $n>2k$ and $\Ric\geq0$.  Then
 \begin{equation}
  \label{eqn:almost_schur}
  \int_M \left(L-\oL\right)^2\dvol \leq \frac{4n(n-1)}{(n-2k)^2}\int_M \lv S_0\rv^2\,\dvol ,
 \end{equation}
 where $\oL=(\int L)/(\int 1)$ is the average of $L$.  Moreover, if $\Ric>0$, then equality holds in~\eqref{eqn:almost_schur} if and only if $S_0=0$ for $S_0$ as in~\eqref{eqn:S0}.
\end{thm}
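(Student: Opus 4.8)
The plan is to follow the De Lellis--Topping argument for the scalar curvature, which proceeds by integrating against the solution of a Poisson equation and applying a Bochner-type estimate. First I would let $\phi\in C^\infty(M)$ solve
\[ \Delta\phi = L-\oL , \qquad \int_M \phi\,\dvol = 0 , \]
which exists and is unique since $\int_M(L-\oL)\,\dvol=0$ by definition of $\oL$. Then
\[ \int_M \left(L-\oL\right)^2\dvol = \int_M \left(L-\oL\right)\Delta\phi\,\dvol = \int_M \langle dL, d\phi\rangle\,\dvol , \]
using that $\oL$ is constant. The key input linking this to $S_0$ is the divergence identity~\eqref{eqn:div_S0}, namely $\delta S_0 = \frac{n-2k}{2n}\,dL$, which lets me rewrite
\[ \int_M \langle dL, d\phi\rangle\,\dvol = \frac{2n}{n-2k}\int_M \langle \delta S_0, d\phi\rangle\,\dvol = -\frac{2n}{n-2k}\int_M \langle S_0, \nabla^2\phi\rangle\,\dvol , \]
and since $S_0$ is trace-free (Lemma~\ref{lem:trGamma}), only the trace-free Hessian $\nabla^2\phi - \frac1n(\Delta\phi)g$ contributes.

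Next I would apply Cauchy--Schwarz to get
\[ \int_M \left(L-\oL\right)^2\dvol \leq \frac{2n}{n-2k}\left(\int_M \lv S_0\rv^2\,\dvol\right)^{1/2}\left(\int_M \Bigl\lv \nabla^2\phi - \tfrac1n(\Delta\phi)g\Bigr\rv^2\,\dvol\right)^{1/2} , \]
so everything reduces to the estimate
\[ \int_M \Bigl\lv \nabla^2\phi - \tfrac1n(\Delta\phi)g\Bigr\rv^2\,\dvol \leq \frac{n-1}{n}\int_M (\Delta\phi)^2\,\dvol , \]
which holds on any closed manifold with $\Ric\geq0$. This is exactly the estimate De Lellis and Topping use, and it follows from the Bochner formula: integrating $\frac12\Delta\lv d\phi\rv^2 = \lv\nabla^2\phi\rv^2 + \langle d\phi, d\Delta\phi\rangle + \Ric(d\phi,d\phi)$ over $M$ gives $\int \lv\nabla^2\phi\rv^2 = \int (\Delta\phi)^2 - \int\Ric(d\phi,d\phi) \leq \int(\Delta\phi)^2$ when $\Ric\geq0$, and subtracting the trace part $\frac1n\int(\Delta\phi)^2$ yields the claimed bound. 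Substituting $\int(\Delta\phi)^2 = \int(L-\oL)^2$ and combining with the Cauchy--Schwarz step gives
\[ \int_M (L-\oL)^2\,\dvol \leq \frac{2n}{n-2k}\left(\int_M\lv S_0\rv^2\right)^{1/2}\left(\frac{n-1}{n}\int_M(L-\oL)^2\right)^{1/2} , \]
and dividing through (the result being trivial if $L$ is constant) and squaring produces~\eqref{eqn:almost_schur}.

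For the equality case under the assumption $\Ric>0$: equality forces equality in both Cauchy--Schwarz and in the Bochner estimate. Equality in Bochner with $\Ric>0$ forces $d\phi=0$, hence $\phi$ is constant, hence $L=\oL$ is constant, hence $dL=0$; but then~\eqref{eqn:div_S0} gives $\delta S_0=0$, and I still need to conclude $S_0=0$. Here I would instead track the equality more carefully: if $L$ is constant then the left side of~\eqref{eqn:almost_schur} is zero, so equality holds iff $\int\lv S_0\rv^2=0$, i.e.\ $S_0=0$; conversely if $S_0=0$ then $dL=0$ so $L$ is constant and both sides vanish. So the equality statement is essentially immediate once one observes that $\Ric>0$ forces $L$ constant whenever equality holds. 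I expect the main obstacle to be bookkeeping the constants correctly through the chain of inequalities (the factor $\frac{4n(n-1)}{(n-2k)^2}$ must come out exactly), and making sure the Bochner/trace-free Hessian estimate is invoked in the sharp form; the structural steps are otherwise a direct transcription of~\cite{DeLellisTopping2012} with $\Ric$ replaced by $S$ and the Bianchi identity replaced by Lemma~\ref{lem:divGamma}.
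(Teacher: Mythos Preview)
Your proposal is correct and follows essentially the same route as the paper's proof: solve the Poisson equation, use the divergence identity~\eqref{eqn:div_S0} to replace $dL$ by $\delta S_0$, pair against the trace-free Hessian via Cauchy--Schwarz, and close with the Bochner estimate $\int\lv\nabla^2\phi\rv^2\leq\int(\Delta\phi)^2$ under $\Ric\geq0$. The only slip is a sign in the first integration by parts (with the paper's convention $\Delta=\tr\nabla^2$ one has $\int(L-\oL)\Delta\phi=-\int\langle dL,d\phi\rangle$, as in the paper's display), but this is harmless since the Cauchy--Schwarz step bounds the absolute value anyway; the constants and the equality analysis are handled correctly.
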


\begin{proof}
 Let $u\in C^\infty(M)$ be a solution of
 \begin{equation}
  \label{eqn:almost_schur_define_u}
  \Delta u = L-\oL .
 \end{equation}
 Using~\eqref{eqn:div_S0} we compute that
 \begin{multline*}
  \int_M \left(L-\oL\right)^2\dvol = -\int_M\lp dL,du\rp\,\dvol \\ = -\frac{2n}{n-2k}\int_M\lp \delta S_0,du\rp\,\dvol = \frac{2n}{n-2k}\int_M\lp S_0,\nabla^2u-\frac{1}{n}\Delta u\,g\rp\,\dvol .
 \end{multline*}
 Applying the Cauchy--Schwarz inequality, we conclude that
 \begin{equation}
  \label{eqn:almost_schur_basic_estimate}
  \int_M\left(L-\oL\right)^2\dvol \leq \frac{2n}{n-2k}\left(\int_M\lv S_0\rv^2\dvol\right)^{\frac{1}{2}}\left(\int_M \bigl(\lv\nabla^2u\rv^2 - \frac{1}{n}(\Delta u)^2\bigr)\dvol\right)^{\frac{1}{2}} .
 \end{equation}
 On the other hand, the Bochner formula and the assumption $\Ric\geq0$ together imply that
 \begin{equation}
  \label{eqn:bochner_consequence}
  \int_M\lv\nabla^2u\rv^2\dvol \leq \int_M (\Delta u)^2\dvol .
 \end{equation}
 Inserting this into~\eqref{eqn:almost_schur_basic_estimate} and using~\eqref{eqn:almost_schur_define_u} yields~\eqref{eqn:almost_schur}.  Finally, if $\Ric>0$, then equality holds in~\eqref{eqn:bochner_consequence} if and only if $u$ is constant.  It readily follows from~\eqref{eqn:almost_schur} and~\eqref{eqn:almost_schur_define_u} that if $\Ric>0$ and equality holds in~\eqref{eqn:almost_schur}, then $S_0=0$.
\end{proof}

\section{Variational Stability}
\label{sec:second_derivative}

A fundamental property of CVIs is that they are conformal gradients of Riemannian functionals: If $L$ is a CVI, then there is a \emph{conformal primitive} $\mS\colon\kC\to\bR$ with conformal gradient equal to $L$.  For the remainder of this article, when the dimension $n$ is understood from context, the \emph{standard conformal primitive} to a CVI $L$ of weight $-2k$ is given by
\[ \mS(g) = \frac{1}{n-2k}\int_M L_g\,\dvol_g \]
if $n\not=2k$, and by~\eqref{eqn:associated_lagrangian_critical_dimension} if $n=2k$.  By restricting a conformal primitive to a volume-normalized conformal class
\begin{equation}
 \label{eqn:kC0}
 \kC_0 := \left\{ e^{2\Upsilon}g \suchthat \int_M e^{n\Upsilon}\dvol_g = \int_M \dvol_g \right\}
\end{equation}
determined by a background Riemannian manifold $(M^n,g_0)$, one obtains a functional whose critical points are exactly those metrics for which $L$ is constant.

\begin{lem}
 \label{lem:crit_mS}
 Let $L$ be a CVI with conformal primitive $\mS$.  Then $(M^n,g)$ is a critical point of $\mS\colon\kC_0\to\bR$ if and only if $L$ is constant.
\end{lem}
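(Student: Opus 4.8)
The plan is to compute the first variation of $\mS$ restricted to the volume-normalized conformal class $\kC_0$ and identify when it vanishes. First I would recall that $T_g\kC_0$ is the subspace of $T_g\kC\cong C^\infty(M)$ consisting of those $\Upsilon$ that are tangent to $\kC_0$; since the volume constraint $\int_M e^{n\Upsilon}\dvol_{g_0}=\int_M\dvol_{g_0}$ linearizes at a metric $g\in\kC_0$ to $\int_M \Upsilon\,\dvol_g=0$, we have $T_g\kC_0 = \{\Upsilon\in C^\infty(M) : \int_M \Upsilon\,\dvol_g = 0\}$. Thus $g$ is a critical point of $\mS\colon\kC_0\to\bR$ precisely when $D\mS(\Upsilon)=0$ for every $\Upsilon$ with $\int_M\Upsilon\,\dvol_g=0$.

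Next I would evaluate $D\mS(\Upsilon)$. In the non-critical case $n\neq 2k$, the standard conformal primitive is $\mS(g)=\frac{1}{n-2k}\int_M L_g\,\dvol_g$, and by~\eqref{eqn:first_conformal_derivative} we have $D\mS(\Upsilon) = \int_M L_g\,\Upsilon\,\dvol_g$. In the critical case $n=2k$, one differentiates the expression~\eqref{eqn:associated_lagrangian_critical_dimension} directly and uses~\eqref{eqn:cvi_linearization}, as indicated in the remark following that formula, to again obtain $D\mS(\Upsilon) = \int_M L_g\,\Upsilon\,\dvol_g$ (here $\int_M \delta(T(d\Upsilon))\,\dvol_g=0$ on a closed manifold). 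So in all cases the condition for criticality is $\int_M L_g\,\Upsilon\,\dvol_g = 0$ for all $\Upsilon$ with $\int_M\Upsilon\,\dvol_g=0$.

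Finally I would close the argument with an elementary $L^2$ orthogonality observation: $\int_M L_g\,\Upsilon\,\dvol_g=0$ for all $\Upsilon$ orthogonal to the constants (in $L^2(\dvol_g)$) if and only if $L_g$ is itself orthogonal to the orthogonal complement of the constants, i.e.\ $L_g$ is constant. Concretely, given any $\Upsilon\in C^\infty(M)$, write $\Upsilon = (\Upsilon - \oUpsilon) + \oUpsilon$ with $\oUpsilon$ the average; applying the criticality condition to $\Upsilon-\oUpsilon$ shows $\int_M L_g(\Upsilon-\oUpsilon)\,\dvol_g=0$, so $\int_M L_g\,\Upsilon\,\dvol_g = \oUpsilon\int_M L_g\,\dvol_g = \oL\int_M\Upsilon\,\dvol_g$, meaning $\int_M(L_g-\oL)\Upsilon\,\dvol_g=0$ for all $\Upsilon$; taking $\Upsilon = L_g-\oL$ forces $L_g\equiv\oL$. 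Conversely if $L_g$ is constant then $\int_M L_g\,\Upsilon\,\dvol_g = L_g\int_M\Upsilon\,\dvol_g = 0$ whenever $\int_M\Upsilon\,\dvol_g=0$, so $g$ is critical.

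The only mild subtlety — the ``main obstacle,'' such as it is — is correctly handling the critical dimension $n=2k$, where the standard conformal primitive is the path integral~\eqref{eqn:associated_lagrangian_critical_dimension} rather than $\int L$; one must verify that its conformal gradient is still $L$ (this is exactly the content of the remark's ``straightforward check'' from~\eqref{eqn:cvi_linearization}) and that the divergence term integrates to zero. Everything else is linear algebra in $L^2$.
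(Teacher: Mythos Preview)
Your argument is correct and is essentially the same as the paper's (very brief) proof, which just identifies $T_g\kC_0$ as the mean-zero functions and invokes Lagrange multipliers; your $L^2$-orthogonality argument is exactly what ``Lagrange multipliers'' unpacks to here. One minor simplification: the lemma is stated for an arbitrary conformal primitive $\mS$, and by the very definition~\eqref{eqn:conformally_variational_definition} of conformal primitive one has $D\mS(\Upsilon)=\int_M L_g\Upsilon\,\dvol_g$ directly, so the case split between $n\neq 2k$ and $n=2k$ is unnecessary.
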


\begin{proof}
 With the identification $T_g\kC\cong C^\infty(M)$, we see that
 \begin{equation}
  \label{eqn:TgkC0}
  T_g\kC_0 = \left\{ \Upsilon\in C^\infty(M) \suchthat \int_M \Upsilon\,\dvol_g = 0 \right\} .
 \end{equation}
 The result follows readily from the method of Lagrange multipliers.
\end{proof}

As discussed in the introduction, there are many examples of CVIs for which the extreme value of the standard conformal primitive in $\kC_0$ has geometric content.  In this section we develop tools aimed at studying all CVIs for which this is the case.  These tools are based on the behavior of $\mS\colon\kC_0\to\bR$ to second order near its critical points.

Given a CVI $L$ with standard conformal primitive $\mS$ and a Riemannian manifold $(M^n,g)$, one cannot expect that the minimizers of $\mS\colon\kC_0\to\bR$, if they exist, are unique or even isolated.  This is already observed on the round spheres, due to the existence of nontrivial conformal Killing fields.  A \emph{conformal Killing field} is a vector field $X$ on $M$ such that $\mL_Xg=2\Upsilon g$ for some $\Upsilon\in C^\infty(M)$; if $\Upsilon$ does not vanish identically, we say that $X$ is \emph{nontrivial}.  Note that the trivial conformal Killing fields are exactly the Killing vector fields, and that the condition that $X$ be a conformal Killing field is conformally invariant.

Let $\mK$ denote the space of conformal Killing fields.  If $X\in\mK$, then its flow $\psi_t$ generates a curve $t\mapsto\psi_t^\ast g$ in $\kC_0$.  Since $\mS$ is a Riemannian functional, it is constant along this curve.  It follows that the functional $\mS$ is constant on the submanifold
\begin{equation}
 \label{eqn:kK}
 \kK(g) := \left\{ \psi^\ast g \suchthat \psi\in\Con_0(M) \right\} \subset \kC_0 .
\end{equation}
where $\Con_0(M)\subset\Diff(M)$ is the connected component of the identity in the Lie group $\Con(M)$ of diffeomorphisms which preserve $\kC$,
\[ \Con(M) := \left\{ \psi\in\Diff(M) \suchthat \psi^\ast g \in [g] \right\} . \]
When $g$ is understood by context, we write $\kK$ for $\kK(g)$.  Note that the tangent space to $\kK$ at $g\in\kC_0$ is, via the identificiation $T_g\kC\cong C^\infty(M)$,
\begin{equation}
 \label{eqn:TgkK}
 T_g\kK = \left\{ \Upsilon\in C^\infty(M) \suchthat \text{$\mL_Xg=2\Upsilon g$ for some $X\in\mK$} \right\} .
\end{equation}

\begin{remark}
 \label{rk:kazdan_warner}
 As observed by Gover and {\O}rsted~\cite{GoverOrsted2010}, the invariance of $\mS$ along $\kK$ yields a Kazdan--Warner-type obstruction to solutions of $L_g=f$ in $\kC_0$; see also~\cite{Bourguignon1985,DelanoeRobert2007}.
\end{remark}

It is a well-known fact that if $(M^n,g)$ is a Riemannian manifold with constant scalar curvature which admits a nontrivial conformal Killing field, then $2J$ is an eigenvalue of the Laplacian~\cite{Lichnerowicz1958,Yano1957}, where $J=R/2(n-1)$ is the trace of the Schouten tensor.  Indeed, for every $\Upsilon\in T_g\kK$, it holds that
\[ -\Delta\Upsilon = 2J\Upsilon . \]
This fact generalizes to CVIs:

\begin{prop}
 \label{prop:S1_eigenvalue}
 Let $L$ be a CVI of weight $-2k$.  Let $(M^n,g)$ be a Riemannian manifold such that $L$ is constant.  For all $\Upsilon\in T_g\kK$, it holds that
 \[ -\delta\left(T(d\Upsilon)\right) = 2kL\Upsilon . \]
\end{prop}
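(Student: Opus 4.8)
The plan is to exploit the conformal invariance of the condition defining $\Upsilon\in T_g\kK$ together with the linearization formula~\eqref{eqn:cvi_linearization}. Fix $X\in\mK$ with $\mL_Xg=2\Upsilon g$, and let $\psi_t$ be the flow of $X$. Because $X$ preserves the conformal class, the metrics $\psi_t^\ast g$ all lie in $\kC$, and one can write $\psi_t^\ast g = e^{2\upsilon_t}g$ for a curve of conformal factors $\upsilon_t$ with $\upsilon_0=0$ and $\left.\frac{\partial}{\partial t}\right|_{t=0}\upsilon_t=\Upsilon$. The key elementary observation is then that along this curve the conformal linearization governs how $L$ changes, i.e.\ $\left.\frac{\partial}{\partial t}\right|_{t=0}L(\psi_t^\ast g) = DL(\Upsilon)$.

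The next step is to evaluate the same derivative using naturality. Since $L$ is a natural scalar invariant, $L(\psi_t^\ast g) = \psi_t^\ast L(g) = L(g)\circ\psi_t$. Differentiating at $t=0$ gives $\left.\frac{\partial}{\partial t}\right|_{t=0}L(\psi_t^\ast g) = dL(X) = X(L)$. But $L$ is assumed constant, so $X(L)=0$. Equating the two expressions for the derivative yields $DL(\Upsilon)=0$.

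Finally, I would substitute the structural formula~\eqref{eqn:cvi_linearization} for the conformal linearization of a CVI of weight $-2k$: $DL(\Upsilon) = -2kL\Upsilon - \delta(T(d\Upsilon))$. Combining with $DL(\Upsilon)=0$ gives $\delta(T(d\Upsilon)) = -2kL\Upsilon$, i.e.\ $-\delta(T(d\Upsilon)) = 2kL\Upsilon$, which is exactly the claim. (Note this also recovers the classical statement for $L=R$, $k=1$, since then $T(d\Upsilon)=2(n-1)d\Upsilon$ by~\eqref{eqn:conformal_linearization_R}, giving $-2(n-1)\Delta\Upsilon = 2R\Upsilon$, consistent with $2J$ being an eigenvalue of $-\Delta$.)

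The only point requiring a little care — and the main thing to get right — is the identity $\left.\frac{\partial}{\partial t}\right|_{t=0}L(\psi_t^\ast g) = DL(\Upsilon)$, i.e.\ that differentiating $L$ along the curve $\psi_t^\ast g = e^{2\upsilon_t}g$ of conformal metrics produces $DL$ applied to the initial velocity $\Upsilon$ of the conformal factor, not merely to the tensor velocity $\mL_Xg = 2\Upsilon g$. This follows from the chain rule together with the compatibility~\eqref{eqn:relate_derivatives} between the two linearizations, namely $DL(\Upsilon) = DL[2\Upsilon g] = DL[\mL_Xg]$; once this is in place the argument is just Lemma~\ref{lem:dL_Lie_derivative} combined with constancy of $L$ and the formula~\eqref{eqn:cvi_linearization}, so there is no real obstacle beyond bookkeeping.
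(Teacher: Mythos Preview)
Your proposal is correct and follows essentially the same route as the paper: pick a conformal Killing field $X$ with $\mL_Xg=2\Upsilon g$, use naturality (i.e.\ Lemma~\ref{lem:dL_Lie_derivative}) plus constancy of $L$ to get $DL(\Upsilon)=DL[\mL_Xg]=dL(X)=0$, and then read off the conclusion from~\eqref{eqn:cvi_linearization}. The paper's version is just the condensed form of your argument, invoking Lemma~\ref{lem:dL_Lie_derivative} directly rather than reproving it via the flow $\psi_t$.
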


\begin{proof}
 Let $X$ be a conformal Killing field with $\mL_Xg=2\Upsilon g$.  Since $L$ is constant, $XL=0$.  Lemma~\ref{lem:dL_Lie_derivative} and~\eqref{eqn:cvi_linearization} then imply that
 \[ 0 = DL(\Upsilon) = -2kL\Upsilon - \delta\left(T(d\Upsilon)\right) . \qedhere \]
\end{proof}

The upshot of the above discussion is that if $(M^n,g)$ is a minimizer of $\mS\colon\kC_0\to\bR$, then $L$ is constant and $\mS$ is minimized along $\kK$.  We are interested in those critical points for which $\mS$ is minimized to second order along $\kK$.

\begin{defn}
 \label{defn:variational_stability}
 A CVI $L$ is \emph{variationally stable} at a critical point $(M^n,g)$ of the standard conformal primitive $\mS\colon\kC_0\to\bR$ if the Hessian $D^2\mS\colon T_g\kC_0\to\bR$ is nonnegative and such that $\ker D^2\mS = T_g\kK$.
\end{defn}

Recall that if $(M^n,g)$ is a critical point of $\mS\colon\kC_0\to\bR$, then the Hessian $D^2\mS$ of $\mS$ is defined as a quadratic form on $T_g\kC_0$ by
\[ D^2\mS(\Upsilon) = \left.\frac{d^2}{dt^2}\right|_{t=0} \mS(g_t), \]
where $g_t$ is a smooth one-parameter family of metrics in $\kC_0$ with $g_0=g$ and $\left.\frac{\partial}{\partial t}\right|_{t=0}g_t=2\Upsilon g$.  Since $g$ is a critical point of $\mS$, this is independent of the choice of curve $g_t$.

One expects that for a given CVI $L$, the second conformal variation $D^2\mS$ is independent of the choice of conformal primitive.  The following lemma verifies this by computing $D^2\mS$ in terms of the conformal linearization of $L$.

\begin{lem}
 \label{lem:second_variation}
 Let $\mS$ be a conformal primitive for a CVI $L$ of weight $-2k$.  Let $(M^n,g)$ be a compact Riemannian manifold such that $L$ is constant.  Then the Hessian of $\mS\colon\kC_0\to\bR$ at $g$ is
 \begin{equation}
  \label{eqn:second_variation}
  D^2\mS(\Upsilon) = \int_M \left[ \lp T(d\Upsilon),d\Upsilon\rp - 2kL\Upsilon^2 \right]\,\dvol .
 \end{equation}
\end{lem}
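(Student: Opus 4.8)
The plan is to compute $D^2\mS(\Upsilon)$ directly, along a curve that is constrained to remain in the volume-normalized class $\kC_0$, and to read off the answer from~\eqref{eqn:cvi_linearization}. I would fix $\Upsilon\in T_g\kC_0$, so that $\int_M\Upsilon\,\dvol_g=0$ by~\eqref{eqn:TgkC0}. Since $L$ is constant, $g$ is a critical point of $\mS\colon\kC_0\to\bR$ by Lemma~\ref{lem:crit_mS}, and hence $D^2\mS(\Upsilon)=\frac{d^2}{dt^2}\big|_{t=0}\mS(g_t)$ for \emph{any} smooth curve $g_t=e^{2\Upsilon_t}g$ in $\kC_0$ with $\Upsilon_0=0$ and $\dot\Upsilon_0=\Upsilon$. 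Such a curve exists: one may take $\Upsilon_t=t\Upsilon+c(t)$, where $c(t)$ is the constant determined by $e^{nc(t)}\int_M e^{nt\Upsilon}\,\dvol_g=\int_M\dvol_g$; then $c(0)=0$, and $c'(0)=0$ precisely because $\int_M\Upsilon\,\dvol_g=0$, so indeed $\dot\Upsilon_0=\Upsilon$.

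Next I would compute the first variation along this curve. Identifying the velocity of $g_t$ at parameter $t$ with $\dot\Upsilon_t\in C^\infty(M)$ under $T_{g_t}\kC\cong C^\infty(M)$, the chain rule together with the defining property of the conformal primitive --- applied at the metric $g_t$ --- gives $\frac{d}{dt}\mS(g_t)=\int_M L_{g_t}\,\dot\Upsilon_t\,\dvol_{g_t}$. Differentiating once more at $t=0$, using that $\frac{d}{dt}\big|_{t=0}L_{g_t}=DL(\Upsilon)$ --- since $t\mapsto g_t$ and $s\mapsto e^{2s\Upsilon}g$ share the velocity $2\Upsilon g$ at $g$ --- and that $\frac{d}{dt}\big|_{t=0}\dvol_{g_t}=n\Upsilon\,\dvol_g$, I obtain
\[ D^2\mS(\Upsilon) = \int_M\left[DL(\Upsilon)\,\Upsilon + nL\,\Upsilon^2 + L\,\ddot\Upsilon_0\right]\dvol_g . \]
Substituting $DL(\Upsilon)=-2kL\Upsilon-\delta\bigl(T(d\Upsilon)\bigr)$ from~\eqref{eqn:cvi_linearization} and integrating by parts on the closed manifold $M$ to replace $-\int_M\Upsilon\,\delta\bigl(T(d\Upsilon)\bigr)\,\dvol$ by $\int_M\lp T(d\Upsilon),d\Upsilon\rp\,\dvol$, this becomes $\int_M\bigl[\lp T(d\Upsilon),d\Upsilon\rp-2kL\Upsilon^2+nL\Upsilon^2+L\,\ddot\Upsilon_0\bigr]\,\dvol_g$.

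The final step is to show the last two terms cancel. Since $L$ is constant, $\int_M L\,\ddot\Upsilon_0\,\dvol_g=L\int_M\ddot\Upsilon_0\,\dvol_g$; and differentiating the normalization $\int_M e^{n\Upsilon_t}\,\dvol_g=\int_M\dvol_g$ twice at $t=0$ yields $\int_M\bigl(n\ddot\Upsilon_0+n^2\Upsilon^2\bigr)\,\dvol_g=0$, i.e.\ $\int_M\ddot\Upsilon_0\,\dvol_g=-n\int_M\Upsilon^2\,\dvol_g$. Hence $\int_M\bigl(nL\Upsilon^2+L\,\ddot\Upsilon_0\bigr)\,\dvol_g=0$, and~\eqref{eqn:second_variation} follows. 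I expect this cancellation to be the only subtle point: the naive curve $t\mapsto e^{2t\Upsilon}g$ leaves $\kC_0$, and its second derivative at $t=0$ exceeds $D^2\mS(\Upsilon)$ by exactly $nL\int_M\Upsilon^2\,\dvol_g$, so one genuinely has to work inside $\kC_0$, which forces the acceleration term $\ddot\Upsilon_0$ into the computation --- and the volume constraint is precisely what evaluates its integral. Everything else is the chain rule together with the already-established identity~\eqref{eqn:cvi_linearization}.
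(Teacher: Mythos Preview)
Your proof is correct and follows essentially the same approach as the paper's: differentiate $\frac{d}{dt}\mS(g_t)=\int_M L_{g_t}\,\dot\Upsilon_t\,\dvol_{g_t}$ once more at $t=0$, substitute~\eqref{eqn:cvi_linearization}, and use the second derivative of the volume constraint to eliminate the acceleration term. The only cosmetic differences are that you construct an explicit curve $\Upsilon_t=t\Upsilon+c(t)$ rather than working with an abstract one, and your $\dot\Upsilon_t$ and $\ddot\Upsilon_0$ correspond to the paper's $\Upsilon_t$ and $\Upsilon'$, respectively.
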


\begin{proof}
 Let $t\mapsto g_t\in\kC_0$ be a curve with $g_0=g$ and $\left.\frac{\partial}{\partial t}\right|_{t=0}g_t=2\Upsilon g$.  Denote by $\Upsilon_t\in C^\infty(M)$ the function satisfying $\frac{\partial}{\partial t}g_t=2\Upsilon_tg_t$ and denote $\Upsilon_t^\prime=\frac{\partial}{\partial t}\Upsilon_t$.  Thus $\Upsilon=\Upsilon_0$.  Denote $\Upsilon^\prime=\Upsilon_0^\prime$.  A direct computation using~\eqref{eqn:cvi_linearization} yields
 \begin{align*}
  \left.\frac{d^2}{dt^2}\right|_{t=0}\mS(g_t) & = \left.\frac{d}{dt}\right|_{t=0}\int_M \Upsilon_tL_{g_t}\,\dvol_{g_t} \\
  & = \int_M \left[ \lp T(d\Upsilon),d\Upsilon\rp + (n-2k)L\Upsilon^2 + L\Upsilon^\prime \right]\,\dvol .
 \end{align*}
 On the other hand, since $g_t\in\kC_0$, it holds that
 \[ 0 = \left.\frac{d^2}{dt^2}\right|_{t=0}\int_M\dvol_{g_t} = n\int_M\left[ \Upsilon^\prime + n\Upsilon^2\right]\,\dvol . \]
 Since $L$ is constant, these two identities together yield~\eqref{eqn:second_variation}.
\end{proof}

Lemma~\ref{lem:second_variation} implies that variational stability is equivalent to a spectral condition on the conformal linearization.

\begin{cor}
 \label{cor:second_variation}
 Let $\mS$ be a conformal primitive for a CVI $L$.  Let $(M^n,g)$ be a compact Riemannian manifold with $L$ constant.  Then the Hessian of $\mS\colon\kC_0\to\bR$ at $g$ is
 \begin{equation}
  \label{eqn:second_variation_DL}
  D^2\mS(\Upsilon) = \int_M \Upsilon\,DL(\Upsilon)\,\dvol .
 \end{equation}
 In particular, $L$ is variationally stable at $g$ if and only if $DL$ is a nonnegative operator and $\ker DL=T_g\kK$.
\end{cor}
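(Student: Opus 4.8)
The plan is to derive \eqref{eqn:second_variation_DL} directly from Lemma~\ref{lem:second_variation} and then read off the characterization of variational stability. First I would observe that by the expression \eqref{eqn:cvi_linearization} for the conformal linearization of a CVI of weight $-2k$, together with the fact that $L$ is constant on $(M^n,g)$, one has
\[
\int_M \Upsilon\,DL(\Upsilon)\,\dvol = \int_M \Upsilon\left(-2kL\Upsilon - \delta\left(T(d\Upsilon)\right)\right)\dvol = \int_M\left[\lp T(d\Upsilon),d\Upsilon\rp - 2kL\Upsilon^2\right]\dvol,
\]
where the last equality is integration by parts, using that $\delta$ is (up to sign) the formal adjoint of $d$ and that $M$ is compact. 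Comparing with \eqref{eqn:second_variation}, this is exactly $D^2\mS(\Upsilon)$, which proves \eqref{eqn:second_variation_DL}.

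Next I would address the ``in particular'' clause. By Definition~\ref{defn:variational_stability}, $L$ is variationally stable at $g$ precisely when $D^2\mS\colon T_g\kC_0\to\bR$ is nonnegative with kernel equal to $T_g\kK$. Using \eqref{eqn:second_variation_DL}, the quadratic form $D^2\mS$ on $T_g\kC_0$ is $\Upsilon\mapsto\llp \Upsilon, DL(\Upsilon)\rrp$. Since $DL$ is formally self-adjoint (because $L$ is a CVI), this is the restriction to $T_g\kC_0 = \{\Upsilon : \int_M\Upsilon\,\dvol_g = 0\}$ of the quadratic form associated with $DL$ on all of $C^\infty(M)$. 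The point I need is that restricting to $T_g\kC_0$ does not change nonnegativity or alter the kernel in a relevant way: if $DL$ is nonnegative on $C^\infty(M)$ then it is nonnegative on the subspace; conversely, if $D^2\mS\geq 0$ on $T_g\kC_0$, then since $DL(1) = -2kL\cdot 1$ by \eqref{eqn:cvi_linearization}, the constants contribute $\llp 1, DL(1)\rrp = -2kL\,\Vol(M)$, so nonnegativity of $DL$ on all of $C^\infty(M)$ follows once one also knows $L\leq 0$ — but in fact the cleaner route is to note that $DL$ preserves the $L^2$-orthogonal splitting $C^\infty(M) = \bR\oplus T_g\kC_0$ up to the constant eigenvalue, so the spectral conditions on the two pieces are independent, and ``$DL$ nonnegative with $\ker DL = T_g\kK$'' is equivalent to ``$D^2\mS\geq 0$ on $T_g\kC_0$ with $\ker D^2\mS = T_g\kK$'' together with $L\leq 0$; and $L\leq 0$ holds automatically since a nonzero constant with wrong sign would force a negative direction. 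I would streamline this by simply noting that $T_g\kK\subset T_g\kC_0$ (elements of $T_g\kK$ have zero average since they are $\tfrac12\tr_g\mL_Xg$ for $X$ a conformal Killing field, and conformal diffeomorphisms of a compact manifold preserve the volume-normalized class), and invoking Proposition~\ref{prop:S1_eigenvalue}, which shows $DL(\Upsilon) = 0$ for all $\Upsilon\in T_g\kK$, so $T_g\kK\subseteq\ker DL$ always.

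With these observations the equivalence is immediate: $L$ is variationally stable at $g$ iff $D^2\mS(\Upsilon) = \llp\Upsilon, DL(\Upsilon)\rrp \geq 0$ for all $\Upsilon\in T_g\kC_0$ and vanishes exactly on $T_g\kK$, which by the orthogonal-decomposition remark is equivalent to $DL$ being a nonnegative operator on $C^\infty(M)$ with $\ker DL = T_g\kK$.

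The main obstacle I anticipate is bookkeeping around the one-dimensional space of constants: one must be careful that ``nonnegative operator $DL$'' in the statement refers to $DL$ on all of $C^\infty(M)$, whereas $D^2\mS$ is naturally only defined on $T_g\kC_0$, and the bridge between the two requires either the sign information $L\leq 0$ or the observation that $DL$ acts as the scalar $-2kL$ on constants and otherwise respects the splitting; getting this dichotomy stated cleanly, rather than the routine integration by parts, is the only delicate point.
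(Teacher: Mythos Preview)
Your derivation of \eqref{eqn:second_variation_DL} via integration by parts from \eqref{eqn:cvi_linearization} and Lemma~\ref{lem:second_variation} is correct and is exactly what the paper does.

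The gap is in your handling of the ``in particular'' clause. You correctly notice that $DL$ acts as $-2kL$ on constants and then try to argue that ``$L\leq 0$ holds automatically since a nonzero constant with wrong sign would force a negative direction.'' This is circular: variational stability is a condition on $T_g\kC_0$ only, so it tells you nothing about the sign of $\llp 1, DL(1)\rrp$. In fact the claim is false. The paper's own running examples---e.g.\ $L=J$ at an Einstein metric with positive scalar curvature (Subsection~\ref{subsec:weight2})---have $L>0$, whence $\llp 1, DL(1)\rrp = -2kL\,\Vol(M)<0$, so $DL$ is \emph{not} nonnegative on all of $C^\infty(M)$. Thus your final sentence, asserting equivalence with ``$DL$ nonnegative on $C^\infty(M)$'', cannot be right as stated.

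The resolution is simply one of interpretation. Since $L$ is constant, $DL$ preserves the splitting $C^\infty(M)=\bR\oplus T_g\kC_0$ (constants are eigenfunctions with eigenvalue $-2kL$, and $\int DL(\Upsilon)=-2kL\int\Upsilon$). The phrase ``$DL$ is a nonnegative operator and $\ker DL=T_g\kK$'' in the corollary is to be read on $T_g\kC_0$, where $D^2\mS$ lives and where $T_g\kK$ sits. With that reading the equivalence is immediate from \eqref{eqn:second_variation_DL}, the definition of variational stability, and Proposition~\ref{prop:S1_eigenvalue} (which gives $T_g\kK\subset\ker DL$). That is precisely the paper's one-line argument; your detour through the constants is unnecessary and, as written, introduces an incorrect claim.
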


\begin{proof}
 It follows immediately from~\eqref{eqn:cvi_linearization} that
 \[ \int_M \Upsilon\,DL(\Upsilon) = \int_M \left[ \lp T(d\Upsilon),d\Upsilon\rp - 2kL\Upsilon^2 \right]\dvol . \]
 Lemma~\ref{lem:second_variation} thus yields~\eqref{eqn:second_variation_DL}.  The final conclusion follows immediately from the definition of variational stability and Proposition~\ref{prop:S1_eigenvalue}.
\end{proof}

One of our long-term goals is to find necessary and sufficient conditions for a CVI to be extremized at Einstein metrics of positive scalar curvature in their conformal class.  Two obvious necessary conditions are that the CVI be constant at all Einstein metrics and be variationally stable at all Einstein metrics with positive scalar curvature.  For many explicit examples of CVIs, it is clear whether they are constant at all Einstein metrics (cf.\ Section~\ref{sec:weight}).  Checking whether such CVIs are variationally stable at Einstein metrics with positive scalar curvature is made relatively simple by means of the Lichnerowicz--Obata Theorem~\cite{Lichnerowicz1958,Obata1962}.

\begin{lem}
 \label{lem:lichnerowicz_obata}
 Let $(M^n,g)$ be an Einstein manifold with positive scalar curvature.  Then $\lambda_1(-\Delta)\geq 2J$ with equality if and only if $(M^n,g)$ is isometric to the round $n$-sphere.  In particular, if $\Upsilon\in T_g\kK$, then $\Upsilon=0$ or $(M^n,g)$ is isometric to the round $n$-sphere.
\end{lem}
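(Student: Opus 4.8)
The plan is to reduce the statement to the classical Lichnerowicz eigenvalue estimate together with Obata's rigidity theorem. First I would record conventions: since $(M^n,g)$ is Einstein, Schur's lemma forces $R$ to be a positive constant and $\Ric=\tfrac{R}{n}g=(n-1)\kappa\,g$ with $\kappa:=\tfrac{R}{n(n-1)}>0$; note $n\kappa=\tfrac{R}{n-1}=2J$. For the bound $\lambda_1(-\Delta)\ge 2J$ I would run the standard Bochner argument. Given a first eigenfunction $f$, $-\Delta f=\lambda_1 f$, integrate the Bochner formula $\tfrac12\Delta|\nabla f|^2=|\nabla^2 f|^2+\langle\nabla f,\nabla\Delta f\rangle+\Ric(\nabla f,\nabla f)$ over $M$, use $|\nabla^2 f|^2\ge\tfrac1n(\Delta f)^2$, the Einstein identity $\Ric(\nabla f,\nabla f)=(n-1)\kappa|\nabla f|^2$, and $\int(\Delta f)^2\,\dvol_g=\lambda_1^2\int f^2\,\dvol_g$, $\int|\nabla f|^2\,\dvol_g=\lambda_1\int f^2\,\dvol_g$. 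This yields $(n-1)\kappa\le\tfrac{n-1}{n}\lambda_1$, i.e.\ $\lambda_1\ge n\kappa=2J$.

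For the equality case I would trace back through these inequalities: $\lambda_1=2J$ forces $|\nabla^2 f|^2=\tfrac1n(\Delta f)^2$ pointwise, which by the equality case of Cauchy--Schwarz (applied to $\nabla^2 f$ against $g$) means $\nabla^2 f=\tfrac{\Delta f}{n}g=-\kappa f\,g$ with $f$ nonconstant. Obata's theorem~\cite{Obata1962} then asserts that a closed Riemannian manifold admitting a nonconstant solution of $\nabla^2 f+\kappa f\,g=0$ with $\kappa>0$ is isometric to the round sphere of radius $1/\sqrt\kappa$; conversely, on the round $n$-sphere the first eigenfunctions (restrictions of linear coordinates) realize equality, so $2J=\lambda_1(-\Delta)$ there. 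This rigidity step is the only substantive one, and since it is classical I would simply invoke~\cite{Obata1962}.

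Finally, for the ``in particular'' clause, let $\Upsilon\in T_g\kK$, say $\mL_Xg=2\Upsilon g$ for a conformal Killing field $X$. As recalled just before Proposition~\ref{prop:S1_eigenvalue} (see~\cite{Lichnerowicz1958,Yano1957}, or apply Proposition~\ref{prop:S1_eigenvalue} to $L=R$, which is constant by Schur), one has $-\Delta\Upsilon=2J\Upsilon$. Integrating over the closed manifold $M$ and using $2J>0$ gives $\int_M\Upsilon\,\dvol_g=0$, so if $\Upsilon\not\equiv0$ then $\Upsilon$ is a nonconstant eigenfunction of $-\Delta$ with eigenvalue $2J$; hence $\lambda_1(-\Delta)\le 2J$, and combined with the bound already proved, $\lambda_1(-\Delta)=2J$. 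The rigidity statement then gives that $(M^n,g)$ is isometric to the round $n$-sphere. The only point needing a brief check is that $\Upsilon$ being simultaneously nonzero and of mean zero really makes $2J$ a genuine nonzero eigenvalue and not the trivial one, which is immediate.
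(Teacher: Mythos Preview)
Your proof is correct and follows essentially the same approach as the paper: the paper simply invokes the Lichnerowicz--Obata theorem as a black box for the eigenvalue bound and its rigidity, then applies Proposition~\ref{prop:S1_eigenvalue} (with $L=J$) to obtain $-\Delta\Upsilon=2J\Upsilon$ for $\Upsilon\in T_g\kK$. You have merely unpacked the Bochner argument and the equality analysis rather than citing them, and added the observation that $\int_M\Upsilon\,\dvol_g=0$ to ensure $2J$ is a genuine nonzero eigenvalue---a detail the paper leaves implicit.
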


\begin{proof}
 Since $\Ric=(2J/n)g>0$, the Lichnerowicz--Obata Theorem implies that $\lambda_1(-\Delta)\geq 2J$ with equality if and only if $(M^n,g)$ is isometric to the round $n$-sphere~\cite{Lichnerowicz1958,Obata1962}.

 Suppose that $\Upsilon\in T_g\kK$.  Let $X\in\mK$ be such that $\mL_Xg=2\Upsilon g$.  Since $J$ is constant, Proposition~\ref{prop:S1_eigenvalue} implies that $-\Delta\Upsilon=2J\Upsilon$.  By the previous paragraph, if $\Upsilon\not=0$, then $(M^n,g)$ is isometric to the round $n$-sphere.
\end{proof}

Note that Lemma~\ref{lem:lichnerowicz_obata} recovers the well-known characterization of $\kK(g)$ at an Einstein metric with positive scalar curvature (see~\cite{YanoNagano1959}).  In particular, it implies that if $L$ is a CVI which is variationally stable at all Einstein metrics with positive scalar curvature, then for each Einstein manifold $(M^n,g)$, either $g$ minimizes $\mS$ to second order or $(M^n,g)$ is isometric to the round $n$-sphere and $\kK(g)$ minimizes $\mS$ to second order.

\section{Ellipticity and solvability of $L$}
\label{sec:singular}

We now turn to solving, at least locally, the problem of prescribing a CVI within a given conformal class.  One expects a general existence theory when restricting to the class of elliptic CVIs.

\begin{defn}
 A CVI $L$ is \emph{elliptic at $(M^n,g)$} if its conformal linearization at $g$ is elliptic under the identification $T_g\kC\cong C^\infty(M)$.  A CVI $L$ is \emph{elliptic} if it is elliptic at every Riemannian manifold.
\end{defn}

Recalling the formula~\eqref{eqn:cvi_linearization} for the conformal linearization of a CVI, we see that $L$ is elliptic at $(M^n,g)$ if and only if the operator $-\delta Td$ is elliptic.  For example, the scalar curvature and Branson's $Q$-curvatures~\cite{Branson1995} are elliptic CVIs, but the $\sigma_2$-curvature is only elliptic in a subcone of $\kC$ (see~\cite{Viaclovsky2000}).  Further examples are discussed in Section~\ref{sec:weight}.  The existence of elliptic cones for a CVI is a general phenomenon:

\begin{lem}
 \label{lem:elliptic_cones}
 Let $L$ be a CVI.  Given a conformal class $\kC$, set
 \[ \mC_L = \left\{ g\in\kC \suchthat \text{$L$ is elliptic at $g$} \right\} . \]
 Then $\mC_L$ is an open cone.
\end{lem}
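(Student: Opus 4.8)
The statement asserts that $\mC_L\subset\kC$ is a cone — invariant under the homotheties $g\mapsto c^2g$, $c>0$ — and that it is open, and the plan is to treat these separately. Throughout I will use that, by~\eqref{eqn:cvi_linearization}, ellipticity of $L$ at $g$ is equivalent to non-vanishing on $T^\ast M\setminus\{0\}$ of the top-order symbol of the operator $\Upsilon\mapsto-\delta\bigl(T(d\Upsilon)\bigr)$; since this operator differs from the conformal linearization $DL$ only by the zeroth-order term $-2kL\,\Upsilon$, the two have the same principal symbol, so I may work with $DL$ itself. For the cone property, fix $g\in\kC$ and $c>0$; since $c^2g\in\kC$, it makes sense to ask whether $c^2g\in\mC_L$. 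Differentiating $t\mapsto e^{2t\Upsilon}(c^2g)=c^2\bigl(e^{2t\Upsilon}g\bigr)$ at $t=0$ and using the homogeneity $L_{c^2h}=c^{-2k}L_h$ yields $DL_{c^2g}(\Upsilon)=c^{-2k}\,DL_g(\Upsilon)$ for all $\Upsilon\in C^\infty(M)$; hence the principal symbols of $DL_{c^2g}$ and $DL_g$ differ by the nowhere-zero constant $c^{-2k}$, so one is non-vanishing off the zero section iff the other is, and $g\in\mC_L\iff c^2g\in\mC_L$.

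For openness, fix $g_0\in\kC$ and identify $\kC$ with $C^\infty(M)$, with its $C^\infty$ topology, via $\Upsilon\mapsto g_\Upsilon:=e^{2\Upsilon}g_0$. Since $DL$ is the linearization of a natural operator, the classical conformal transformation laws for the Levi-Civita connection and the curvature tensor express the coefficients of the differential operator $DL_{g_\Upsilon}$, in any local frame, smoothly in terms of $g_0$, its inverse, its curvature and covariant derivatives thereof, and $\Upsilon$ together with its partial derivatives up to some fixed finite order $\ell$ determined by the order of $L$. Consequently the top-order symbol $p_\Upsilon(x,\xi):=\sigma\bigl(DL_{g_\Upsilon}\bigr)(x,\xi)$, a homogeneous polynomial in $\xi$ of fixed degree $N$, has coefficients depending continuously on $x\in M$ and on the $\ell$-jet of $\Upsilon$, hence continuously on $\Upsilon$ in the $C^\infty$ topology. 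Ellipticity of $DL_{g_\Upsilon}$ means $p_\Upsilon(x,\xi)\ne0$ whenever $\xi\ne0$, equivalently, by homogeneity in $\xi$, $p_\Upsilon\ne0$ on the unit cosphere bundle $S^\ast_{g_0}M$. If $g_{\Upsilon_0}\in\mC_L$, then $\lv p_{\Upsilon_0}\rv$ is a positive continuous function on the compact set $S^\ast_{g_0}M$, so $\lv p_{\Upsilon_0}\rv\ge\eps_0>0$ there; by joint continuity of $(\Upsilon,x,\xi)\mapsto p_\Upsilon(x,\xi)$ together with compactness, $\lv p_\Upsilon\rv\ge\eps_0/2$ on $S^\ast_{g_0}M$ for every $\Upsilon$ in a $C^\infty$-neighborhood of $\Upsilon_0$. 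All such $g_\Upsilon$ lie in $\mC_L$, so $\mC_L$ is open.

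The real content lies in the openness step, and it is essentially bookkeeping. First, one should note that the degree $N$ of the principal symbol is constant over $\kC$ — it is, being fixed by the algebraic form of $L$ rather than by the particular metric, so that ``$L$ elliptic at $g$'' unambiguously means ``$\sigma_N(DL_g)$ is nowhere zero on $T^\ast M\setminus\{0\}$''. Second, one should make precise the claim that the symbol coefficients depend continuously on finitely many derivatives of the conformal factor; this follows from naturality together with the conformal-change formulae, but deserves to be spelled out. The compactness of $S^\ast_{g_0}M$ — that is, of $M$ — is what converts pointwise non-vanishing into the uniform lower bound that survives small perturbations; for non-compact $M$ one would instead use the Whitney $C^\infty$ topology on $\kC$, or restrict to compact $M$ as elsewhere in the paper. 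I do not expect any genuinely difficult step.
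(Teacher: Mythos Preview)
Your proof is correct and follows essentially the same approach as the paper: the cone property via homogeneity $DL_{c^2g}=c^{-2k}DL_g$, and openness via continuity of the operator (equivalently, its principal symbol) in the conformal class. The paper's argument is terser---it simply notes that $T$ in~\eqref{eqn:cvi_linearization} is a natural differential operator, hence $\delta Td$ varies continuously in $\kC$, and concludes---while you spell out the compactness argument on the unit cosphere bundle that makes ``ellipticity is an open condition'' precise; you also flag the implicit use of compactness of $M$, which the paper does not mention.
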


\begin{proof}
 It follows immediately from the homogeneity of $L$ that $DL_{c^2g}=c^{-2k}DL_g$, where $-2k$ is the weight of $L$.  Hence $\mC_L$ is a cone.

 Let $T$ be as in~\eqref{eqn:cvi_linearization}.  Since $L$ is natural, we readily conclude that $T$ is a natural differential operator.  Hence $T$, and therefore $\delta Td$, is continuous in $\kC$.  Thus $\mC_L$ is open.
\end{proof}

One consequence of ellipticity is that variational stability implies local stability.

\begin{prop}
 \label{prop:local_stability}
 Let $L$ be a CVI which is variationally stable and elliptic at $(M^n,g)$ and let $\mS$ be a conformal primitive for $L$.  If $L_g$ is constant, then there is a neighborhood $U\subset\kC_0$ of $g$ such that
 \[ \mS(\hg) \geq \mS(g) \]
 for all $\hg\in U$, with equality if and only if $\hg\in U\cap\kK$.
\end{prop}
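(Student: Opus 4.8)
The plan is to realize $g$ as a critical point of $\mS\colon\kC_0\to\bR$ whose Hessian is strictly positive transverse to the orbit direction $\kK$, to upgrade variational stability to a genuine spectral gap by exploiting ellipticity, and then to run a Lyapunov--Schmidt reduction onto the finite-dimensional space $T_g\kK$. Since $L_g$ is constant, Lemma~\ref{lem:crit_mS} makes $g$ a critical point of $\mS\colon\kC_0\to\bR$, and by Corollary~\ref{cor:second_variation} the Hessian of $\mS$ at $g$ is the quadratic form $\Upsilon\mapsto\llp\Upsilon,DL(\Upsilon)\rrp$ on $T_g\kC_0$, which by variational stability is nonnegative with kernel exactly $T_g\kK$.

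The first substantive step is to convert nonnegativity into coercivity. Because $L$ is elliptic at $g$ and $L_g$ is constant, the conformal linearization $DL$, which by~\eqref{eqn:cvi_linearization} is $\Upsilon\mapsto-2kL_g\Upsilon-\delta(T(d\Upsilon))$, is a formally self-adjoint elliptic operator of some even order $2m$ on the closed manifold $M$; a standard test-function argument, using $\llp\Upsilon,DL(\Upsilon)\rrp\geq0$ for mean-free $\Upsilon$, shows that its principal symbol is nonnegative, hence positive by ellipticity. Let $W$ be the $L^2$-orthogonal complement of the finite-dimensional space $T_g\kK$ inside $T_g\kC_0$. Then $DL$ restricted to $W$ is nonnegative, self-adjoint, has compact resolvent, and has trivial kernel by variational stability, so its spectrum is bounded below by some $c_0>0$; combined with G\aa rding's inequality and Rellich's theorem this gives a constant $c_1>0$ with $\llp\Upsilon,DL(\Upsilon)\rrp\geq c_1\lVert\Upsilon\rVert_{H^m}^2$ for all $\Upsilon\in W$. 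Because $DL$ is natural, and hence depends continuously on the metric in the function-space topologies of Section~\ref{sec:singular}, this coercivity persists with a comparable constant for all metrics sufficiently near $g$.

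Next I would parametrize a neighborhood of $g$ in $\kC_0$ by a neighborhood of $0$ in $T_g\kC_0$ through $\psi\mapsto e^{2(\psi+c(\psi))}g$, where $c(\psi)\in\bR$ is the unique volume-normalizing constant from~\eqref{eqn:kC0} (it is $O(\lVert\psi\rVert^2)$ since $\int\psi\,\dvol_g=0$), and write $F$ for $\mS$ in this chart, so $F(0)=\mS(g)$, $DF(0)=0$, and $D^2F(0)(\psi)=\llp\psi,DL(\psi)\rrp$. Splitting $T_g\kC_0=T_g\kK\oplus W$ and using that $DL$ is Fredholm of index $0$ with kernel $T_g\kK$, the implicit function theorem --- applied in the scale of Banach spaces of Section~\ref{sec:singular} --- solves the $W$-component of $\nabla F=0$, giving a smooth map $v\mapsto w(v)$ from a small ball in $T_g\kK$ to $W$ with $w(0)=0$ and $Dw(0)=0$. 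Now $\mS$ is constant along the orbit submanifold $\kK$ of~\eqref{eqn:kK}, whose chart image is a submanifold tangent to $T_g\kK$ at $0$ by~\eqref{eqn:TgkK}, and every metric of $\kK$ near $g$ has the form $\phi^\ast g$, hence has $L$ constant, hence is itself a critical point of $\mS\colon\kC_0\to\bR$ by Lemma~\ref{lem:crit_mS}. Therefore the chart image of $\kK$ is contained in the critical locus $\{(v,w(v))\}$; being submanifolds through $0$ of the same dimension with the same tangent space, they agree near $0$, so $F(v,w(v))\equiv\mS(g)$.

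Finally, for an arbitrary $\hg\in\kC_0$ near $g$ with chart coordinates $(v,w)$, the function $w\mapsto F(v,w)$ has a critical point at $w(v)$, so a second-order Taylor expansion there reads $F(v,w)=\mS(g)+\tfrac{1}{2}\llp w-w(v),P_{v,\theta}(w-w(v))\rrp$ for some $\theta$ on the segment from $w(v)$ to $w$, where the operator $P_{v,\theta}$ differs from $DL$ at the nearby metric $e^{2(v+\theta+c)}g$ only by terms of higher order in the small data; by the persistent coercivity of the second step the right-hand side is $\geq\mS(g)+\tfrac{c_1}{4}\lVert w-w(v)\rVert_{H^m}^2$, with equality precisely when $w=w(v)$, i.e.\ precisely when $\hg$ lies in the chart image of $\kK$. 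Since $\mS$ is automatically constant on $U\cap\kK$, this yields both the inequality and the characterization of equality. I expect the main difficulty to be the bookkeeping of function spaces in these last two steps: the Hessian is an $L^2$-pairing while $\mS$ is differentiable only on, say, $C^{2m,\alpha}$, so one must carry out the implicit function theorem and the Taylor estimate in a scale of spaces on which $DL$ is at once elliptic-invertible transverse to $T_g\kK$ and $\mS$ is $C^2$ --- the framework already developed in Section~\ref{sec:singular} --- and a lesser point is to confirm that the chart image of $\kK$ is genuinely a finite-dimensional submanifold tangent to $T_g\kK$, which follows from the Lie group structure of $\Con(M)$ and~\eqref{eqn:TgkK}.
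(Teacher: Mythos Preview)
Your argument is correct and follows essentially the same route as the paper: both identify $g$ as a critical point with Hessian $DL_g$, use variational stability plus ellipticity to get that $DL_g$ is positive and invertible on the complement of $T_g\kK$, and then apply a Morse-type reduction along the finite-dimensional orbit $\kK$. The only difference is packaging: the paper invokes the Morse Lemma for Banach spaces in normal variables from Fischer--Marsden~\cite[p.\ 543]{FischerMarsden1975} as a black box (working in $C^{\ell,\alpha}$), whereas you unpack that lemma by hand via Lyapunov--Schmidt and a Taylor estimate with the $H^m$-coercivity of $DL_g$.
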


\begin{proof}
 This is a consequence of the Morse Lemma for Banach spaces in normal variables~\cite[p.\ 543]{FischerMarsden1975}:

 Let $\ell\in\bN$ be sufficiently large so that the space $\kC_0^{\ell,\alpha}$ of volume-normalized $C^{\ell,\alpha}$-metrics conformal to $g$ is a Banach manifold and $\mS\colon\kC^{\ell,\alpha}\to\bR$ is $C^2$.  Then $\kK$ is a finite-dimensional submanifold of $\kC^{\ell,\alpha}$ and $\mS\rv_{\kK}$ is constant.  Since $L_g$ is constant, the derivative $d\mS\colon T_g\kC_0^{\ell,\alpha}\to\bR$ vanishes identically.  Since $\kK$ is the orbit of $g$ under the action of $\Con_0(M)$, we conclude that $d\mS$ vanishes along $\kK$.

 Let $E_g\subset T_g\kC^{\ell,\alpha}$ be the orthogonal complement to $T_g\kK$ with respect to the inner product $\llp\cdot,\cdot\rrp$.  By assumption, $L_g$ is the gradient of $\mS$.  Since $L$ is a CVI, $DL_g$ is formally self-adjoint, and hence maps $E_g$ to $E_g$.  Since $L$ is variationally stable at $g$, it holds that $DL_g\colon E_g\to E_g$ is positive definite.  Thus $DL_g$ is injective.  Finally, since $L$ is an elliptic CVI at $g$, its gradient $DL_g$ is Fredholm and of index zero.  Thus $DL_g$ is surjective.  We may thus apply~\cite[Lemma~5, p.\ 543]{FischerMarsden1975} to conclude that there is a neighborhood $U\subset\kC_0^{\ell,\alpha}$ of $\kK$ such that $\mS(\hg)\geq\mS(g)$ for all $\hg\in U$, with equality if and only if $\hg\in U\cap\kK$, as desired.
\end{proof}

Another application of ellipticity is to the solvability of the equation $L_g=f$.  However, even for elliptic CVIs, there are obstructions to this problem (e.g.\ Remark~\ref{rk:kazdan_warner}).  Infinitesimally, the failure of $\Gamma$ to be surjective, or equivalently $\Gamma^\ast$ to be injective, gives an obstruction to this problem.

\begin{lem}
 \label{lem:injective_symbol}
 Let $L$ be a CVI which is elliptic at $(M^n,g)$.  Then $\Gamma^\ast$ has injective principal symbol.  In particular,
 \begin{equation}
  \label{eqn:splitting_injective_symbol}
  C^\infty(M) = \ker\Gamma^\ast \oplus \im \Gamma .
 \end{equation}
\end{lem}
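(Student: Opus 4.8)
The plan is to deduce the statement from general elliptic theory once we understand the principal symbol of $\Gamma^\ast$. First I would recall the formula for $\Gamma^\ast$ obtained by combining Lemma~\ref{lem:trGamma}, Lemma~\ref{lem:divGamma}, and the relation~\eqref{eqn:relate_derivatives}. More directly: for $f\in C^\infty(M)$ and $h\in S^2T^\ast M$ we have $\llp \Gamma^\ast(f),h\rrp = \llp f,\Gamma[h]\rrp$, so the principal symbol $\sigma_\xi(\Gamma^\ast)\colon\bR\to S^2T^\ast_xM$ is the adjoint of the principal symbol $\sigma_\xi(\Gamma)\colon S^2T^\ast_xM\to\bR$. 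By ellipticity of $L$ at $g$, the conformal linearization $DL=-2kL\,\cdot\,-\delta T d$ is elliptic, i.e.\ $\sigma_\xi(-\delta Td)\not=0$ for $\xi\not=0$; and $\sigma_\xi(DL)(\Upsilon)=\sigma_\xi(\Gamma)[2\Upsilon g]$ by~\eqref{eqn:relate_derivatives}. Hence for $\xi\not=0$ there is an $\Upsilon$ with $\sigma_\xi(\Gamma)[2\Upsilon g]\not=0$, so $\sigma_\xi(\Gamma)$ is a nonzero (hence surjective) functional on $S^2T^\ast_xM$. Taking adjoints, $\sigma_\xi(\Gamma^\ast)$ is injective for every $\xi\not=0$, which is the first assertion.

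Next I would invoke the standard consequence of an injective principal symbol on a closed manifold: an operator $\Gamma^\ast$ with injective symbol is \emph{overdetermined elliptic}, and therefore $\Gamma^\ast$ has finite-dimensional kernel and closed range, and the $L^2$-decomposition $C^\infty(M)=\ker(\Gamma^\ast{}^\ast\Gamma^\ast)\oplus\im(\Gamma^\ast{}^\ast\Gamma^\ast)$ holds after passing to the appropriate Sobolev completions and using elliptic regularity to return to smooth sections. Since $\Gamma$ is the formal $L^2$-adjoint of $\Gamma^\ast$, we have $\ker(\Gamma^\ast{}^\ast\Gamma^\ast)=\ker\Gamma^\ast$ (because $\llp\Gamma^\ast{}^\ast\Gamma^\ast f,f\rrp=\lVert\Gamma^\ast f\rVert^2$) and $\im(\Gamma^\ast{}^\ast\Gamma^\ast)\subseteq\im\Gamma$; combined with the orthogonal-complement identity $\im\Gamma=(\ker\Gamma^\ast)^\perp$ this gives exactly~\eqref{eqn:splitting_injective_symbol}. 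Here I am using $\Gamma^\ast{}^\ast=\Gamma$ and the fact that $\Gamma^\ast{}^\ast\Gamma^\ast=\Gamma\comp\Gamma^\ast$ is a (nonnegative, self-adjoint) elliptic operator on $C^\infty(M)$ precisely because $\Gamma^\ast$ has injective symbol.

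The main obstacle, such as it is, lies in the symbol computation of the first paragraph: one must be careful that ellipticity of $L$ is the statement that $-\delta Td$ is an \emph{elliptic operator on scalars}, and translate this into surjectivity of $\sigma_\xi(\Gamma)$ via~\eqref{eqn:relate_derivatives}, paying attention to the zeroth-order term $-2kL$ which does not affect the principal symbol. Everything after that is a black-box application of elliptic theory on a closed manifold (finite-dimensional kernel, closed range, Fredholm alternative, elliptic regularity), for which I would cite a standard reference; the splitting~\eqref{eqn:splitting_injective_symbol} then follows formally. One should also note explicitly that $M$ is compact throughout this section so that these Hodge-type decompositions are available; if the paper does not wish to assume compactness here, the statement should be read as the orthogonal decomposition at the level of the relevant function spaces, with $\im\Gamma$ interpreted as the image of the appropriate Sobolev extension.
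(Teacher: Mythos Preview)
Your proposal is correct and follows essentially the same approach as the paper. The only differences are presentational: for the symbol computation, the paper uses Lemma~\ref{lem:trGamma} directly, observing that $\tr\circ\Gamma^\ast=\frac{1}{2}DL$ so that the trace of $\sigma_\xi(\Gamma^\ast)$ equals the (invertible) symbol of $DL$, which immediately gives injectivity without passing through the adjoint-of-the-symbol identity; and for the splitting, the paper simply cites the Berger--Ebin theorem rather than sketching the overdetermined-elliptic argument via $\Gamma\Gamma^\ast$. Your duality argument and the paper's trace argument are dual to one another and equally valid.
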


\begin{proof}
 By Lemma~\ref{lem:trGamma}, the trace of the principal symbol of $\Gamma^\ast$ is the principal symbol of $DL\colon T_g\kC\to C^\infty(M)$.  Since the latter is invertible, $\Gamma^\ast$ has injective principal symbol.  The splitting~\eqref{eqn:splitting_injective_symbol} then follows from the Berger--Ebin splitting theorem~\cite{BergerEbin1969}.
\end{proof}

Lemma~\ref{lem:injective_symbol} provides the essential tool to prove Theorem~\ref{thm:intro_corollary}.

\begin{proof}[Proof of Theorem~\ref{thm:intro_corollary}]
 Since $\ker\Gamma^\ast=\{0\}$, Lemma~\ref{lem:injective_symbol} implies that $L\colon\kM\to C^\infty(M)$ is a submersion at $g$.  The conclusion then follows from the Implicit Function Theorem on Banach spaces (see~\cite{Lang1995}).
\end{proof}

Lemma~\ref{lem:injective_symbol} and Theorem~\ref{thm:intro_corollary} show that $\ker\Gamma^\ast$ is an object of geometric interest.  Motivated by other natural geometric problems involving CVIs (e.g.\ Section~\ref{sec:critical} below), we introduce the following terminology.

\begin{defn}
 Let $L$ be a CVI of weight $-2k$.  A Riemannian manifold $(M^n,g)$ is \emph{$L$-critical} if there is a nontrivial function $f\in C^\infty(M)$ and a constant $\lambda\in\bR$ such that $\Gamma^\ast (f) = \lambda g$.  A Riemannian manifold $(M^n,g)$ is \emph{$L$-singular} if $\ker\Gamma^\ast\not=\{0\}$.
\end{defn}

By~\eqref{eqn:first_derivative} and Proposition~\ref{prop:schur}, if $n>2k$ and $(M^n,g_0)$ is a critical point of the restriction of the total $L$-curvature functional to
\[ \kM_0 := \left\{ g\in\kM \suchthat \Vol_g(M) = \Vol_{g_0}(M) \right\}, \]
then $(M^n,g_0)$ is $L$-critical.

Theorem~\ref{thm:intro_corollary} states that if $(M^n,g)$ is not $L$-critical, then the equation $L_{\og}=\psi$ can be solved locally in $\kM$.  Under the additional assumption that $L_g\equiv0$, this problem is globally unobstructed (cf.\ \cite{LinYuan2015a}).

\begin{cor}
 \label{cor:deformation_from_zero}
 Let $L$ be a nontrivial CVI which is elliptic at $(M^n,g)$.  If $L_g=0$ and $(M^n,g)$ is not $L$-singular, then for every bounded $\psi\in C^\infty(M)$, there is a metric $\og\in\kM$ such that $L_{\og}=\psi$.
\end{cor}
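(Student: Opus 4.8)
The plan is to deduce this from the local solvability statement of Theorem~\ref{thm:intro_corollary} together with the homogeneity of $L$. Theorem~\ref{thm:intro_corollary} already lets us hit every function in a fixed neighborhood of $L_g=0$ in $C^\infty(M)$ by a metric near $g$; since $L$ is homogeneous of degree $-2k$ with $k\geq1$, rescaling a metric by a small constant $t^2$ multiplies the associated value of $L$ by the large factor $t^{-2k}$. So any bounded $\psi$, once shrunk to $t^{2k}\psi$ inside that neighborhood, can be prescribed and then scaled back up to $\psi$.

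First I would apply Theorem~\ref{thm:intro_corollary}: since $L$ is elliptic at $(M^n,g)$ and $(M^n,g)$ is not $L$-singular, i.e.\ $\ker\Gamma^\ast=\{0\}$, there are a neighborhood $U$ of $g$ in $\kM$ and a neighborhood $V$ of $L_g=0$ in $C^\infty(M)$ such that every $f\in V$ equals $L_{\hg}$ for some $\hg\in U$. Here the hypothesis $L_g=0$ is used precisely so that $V$ is a neighborhood of the origin.

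Next, given a bounded $\psi\in C^\infty(M)$, I would choose $t\in(0,1]$ small enough that $t^{2k}\psi\in V$; this is possible because $L$ is nontrivial, so $k\in\bN$ and $t^{2k}\to0$ as $t\to0^+$, while the boundedness of $\psi$ guarantees that the relevant norm of $t^{2k}\psi$ tends to $0$ in the function-space topology used in Section~\ref{sec:singular}. Applying the previous step with $f=t^{2k}\psi$ produces $\hg\in U$ with $L_{\hg}=t^{2k}\psi$. Setting $\og:=t^2\hg\in\kM$ and using the homogeneity of $L$ of degree $-2k$,
\[ L_{\og}=L_{t^2\hg}=t^{-2k}L_{\hg}=t^{-2k}\bigl(t^{2k}\psi\bigr)=\psi, \]
which is the desired conclusion.

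The only delicate point --- and the reason the hypothesis ``bounded'' is imposed --- is ensuring that the rescaled data $t^{2k}\psi$ genuinely lands in the neighborhood $V$ for all sufficiently small $t$; this is automatic on a compact manifold, where every smooth function is bounded and $t^{2k}\psi\to0$ in each $C^{m,\alpha}$, and is exactly the role of boundedness in the weighted setting of Section~\ref{sec:singular}. Apart from this bookkeeping there is no genuine obstacle: all of the analytic content is already contained in Theorem~\ref{thm:intro_corollary}, and the corollary follows from the single additional observation that the constant rescaling $g\mapsto t^2g$ acts on $L$ by the scalar $t^{-2k}$ with exponent $-2k<0$.
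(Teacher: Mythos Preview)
Your argument is correct and matches the paper's proof essentially verbatim: invoke Theorem~\ref{thm:intro_corollary} to obtain a neighborhood $V$ of $0=L_g$, scale $\psi$ by $t^{2k}$ into $V$, solve for $\hg$, and then rescale $\hg$ by $t^2$ using homogeneity. The only cosmetic difference is that the paper writes $\varepsilon$ where you write $t$.
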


\begin{proof}
 Let $-2k$, $k\in\bN$ be the weight of $L$.  Let $V$ be as in Theorem~\ref{thm:intro_corollary} and choose $\varepsilon>0$ be sufficiently small so that $\varepsilon^{2k}\psi\in V$.  Then there is a $\hg\in\kM$ such that $L_{\hg}=\varepsilon^{2k}\psi$.  Then $\og:=\varepsilon^2\hg$ is such that $L_{\og}=\psi$.
\end{proof}

In order to apply Theorem~\ref{thm:intro_corollary} and Corollary~\ref{cor:deformation_from_zero}, we would like a test for whether a Riemannian manifold is not $L$-singular.  For elliptic CVIs of low weight, Theorem~\ref{thm:intro_Lconst} gives a sufficient condition: If $L_g$ is nonconstant, then $(M,g)$ is not $L$-singular.  Indeed, we conclude that $(M,g)$ is not $L$-critical.  Generalizing the approach to $R$-critical~\cite{MiaoTam2008} and $Q_4$-critical metrics~\cite{LinYuan2015b}, we prove Theorem~\ref{thm:intro_Lconst} by using the weak unique continuation property.

\begin{defn}
 A differential operator $F\colon C^\infty(M)\to C^\infty(M)$ on $M$ satisfies the \emph{weak unique continuation property} if the only solution $\phi\in C^\infty(M)$ of $F(\phi)=0$ such that $\phi\rv_U\equiv0$ for some open set $U\subset M$ is the constant function $\phi\equiv0$.
\end{defn}

\begin{thm}
 \label{thm:unique_continuation}
 Let $L$ be a CVI.  Suppose that $(M^n,g)$ is such that $DL_g$ satisfies the weak unique continuation property.  Let $f\in C^\infty(M)$ be such that $\Gamma^\ast(f)=\lambda g$ for some constant $\lambda\in\bR$.  If $f\not\equiv0$, then $L_g$ is constant.
\end{thm}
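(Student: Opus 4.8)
The plan is to exploit the two algebraic identities for $\Gamma^\ast$ established in Section~\ref{sec:schur} --- namely Lemma~\ref{lem:trGamma} (the trace identity) and Lemma~\ref{lem:divGamma} (the divergence identity) --- in order to derive a differential equation satisfied by $f$, and then to apply the weak unique continuation property of $DL_g$. First I would take the hypothesis $\Gamma^\ast(f)=\lambda g$ and apply $\delta$ to both sides. Since $\delta(\lambda g)=0$ (as $\lambda$ is a constant), Lemma~\ref{lem:divGamma} gives $-\tfrac12 f\,dL_g = 0$, i.e.\ $f\,dL_g = 0$ pointwise on $M$.

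Now I would argue by contradiction: suppose $L_g$ is nonconstant. Then the open set $V := \{x\in M : dL_g(x)\neq 0\}$ is nonempty, and on $V$ the identity $f\,dL_g=0$ forces $f\equiv 0$ on $V$. So $f$ vanishes on a nonempty open set. Next I would combine the two other pieces of information about $f$. Taking the trace of $\Gamma^\ast(f)=\lambda g$ and using Lemma~\ref{lem:trGamma} with $T$ as in~\eqref{eqn:cvi_linearization} gives $-kL_g f - \tfrac12\delta(T(df)) = n\lambda$; on the other hand, applying the conformal linearization directly, recall from the proof of Lemma~\ref{lem:trGamma} that $\llp\Gamma^\ast(f),ug\rrp = \tfrac12\llp u, DL(f)\rrp$ for all compactly supported $u$, which means that as a distributional identity $DL(f) = 2\tr\Gamma^\ast(f) = 2n\lambda$. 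Thus $f$ solves $DL_g(f) = 2n\lambda$, a constant. Since $DL_g$ is a natural operator annihilating constants (it has the form $-2kL_g\,\cdot\,-\delta T d$, and $DL_g(c)=-2kL_gc$ only if $k=0$ or $L_g$ — wait, more carefully: for the purpose of unique continuation we only need that $DL_g$ is a linear differential operator to which the weak unique continuation hypothesis applies). The point is that $f - f_0$ satisfies a homogeneous equation for a suitable constant $f_0$ when $L_g$ is constant, but here $L_g$ need not be constant; so instead I would directly apply weak unique continuation to the operator $DL_g$ after subtracting off a particular solution.

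Let me reorganize: the cleanest route is to observe that $\phi := DL_g(f) - 2n\lambda$ is not quite the right object since $DL_g$ need not annihilate constants. Instead, since $f$ vanishes on the nonempty open set $V$, and since $DL_g(f)=2n\lambda$ is constant on all of $M$, in particular $DL_g(f)=2n\lambda$ on $V$; but $f\equiv 0$ on $V$ and $DL_g$ is a differential operator, so $DL_g(f)=0$ on $V$, forcing $\lambda=0$. Hence $f$ satisfies the homogeneous equation $DL_g(f)=0$ on $M$ and vanishes on the nonempty open set $V$. By the weak unique continuation property of $DL_g$, we conclude $f\equiv 0$ on $M$, contradicting $f\not\equiv 0$. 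Therefore $L_g$ must be constant, which is the claim. The main obstacle — and the step deserving the most care — is justifying rigorously that $\Gamma^\ast(f)=\lambda g$ implies $DL_g(f)$ equals the constant $2n\lambda$ globally (not merely the trace relation), via the self-adjointness argument in the proof of Lemma~\ref{lem:trGamma}, and then checking that the weak unique continuation hypothesis is indeed being applied to a homogeneous equation after the deduction $\lambda=0$; everything else is a short diagram chase through Lemmas~\ref{lem:trGamma} and~\ref{lem:divGamma}.
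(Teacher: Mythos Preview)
Your argument is correct and follows essentially the same route as the paper: apply Lemma~\ref{lem:divGamma} to obtain $f\,dL_g=0$, deduce that $f$ vanishes on a nonempty open set when $L_g$ is nonconstant, and then invoke the weak unique continuation property of $DL_g$ to force $f\equiv0$. Your write-up is in fact slightly more explicit than the paper's on one point: you use the trace identity (equivalently~\eqref{eqn:cvi_linearization}) to show $DL_g(f)=2n\lambda$ globally, then observe that $f\equiv0$ on the open set $V$ forces $\lambda=0$, so that weak unique continuation is genuinely being applied to the \emph{homogeneous} equation $DL_g(f)=0$; the paper's proof passes over this step silently. The meandering in the middle of your proposal (the aside about subtracting a particular solution, etc.) is unnecessary --- once you have $f\,dL_g=0$ and $DL_g(f)=2n\lambda$, the clean endgame you arrive at in your final paragraph is all that is needed.
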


\begin{proof}
 Since $\Gamma^\ast(f)=\lambda g$, Lemma~\ref{lem:divGamma} implies that
 \begin{equation}
  \label{eqn:LsLs1}
  f\,dL=0 .
 \end{equation}
 Suppose that $L_g$ is not constant.  Then there is a nonempty open set $U\subset M$ such that $(dL)_{x}\not=0$ for all $x\in U$.  It follows from~\eqref{eqn:LsLs1} that $f\rv_U\equiv0$.  Since $DL_g$ satisfies the weak unique continuation property, we conclude that $f\equiv0$.
\end{proof}

There are some ways to conclude that $L_g$ is constant without knowing that $DL_g$ satisfies the weak unique continuation property.  For example, $L$-critical manifolds which are not $L$-singular have constant $L$-curvature.  A more refined statement is as follows:

\begin{prop}
 \label{prop:Lcrit-nonLsing}
 Let $L$ be a CVI.  Let $(M^n,g)$ be a Riemannian manifold such that there is a function $f\in C^\infty(M)$ such that $\Gamma^\ast(f)=\lambda g$ for some nonzero constant $\lambda\in\bR$.  Then $L_g$ is constant.
\end{prop}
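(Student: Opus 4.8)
The plan is a short proof by contradiction that makes visible exactly where the hypothesis $\lambda\neq0$ is used. First I would apply the divergence operator to the hypothesis $\Gamma^\ast(f)=\lambda g$. Since the Levi-Civita connection is metric we have $\delta g=0$, so Lemma~\ref{lem:divGamma} gives $-\frac{1}{2}f\,dL=\delta\Gamma^\ast(f)=\lambda\,\delta g=0$; that is, $f\,dL\equiv0$ on $M$. This is precisely the identity that appears at the start of the proof of Theorem~\ref{thm:unique_continuation}, and deriving it is immediate.

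Next, suppose toward a contradiction that $L_g$ is nonconstant. Then the smooth $1$-form $dL$ is nonzero at some point, hence nonzero on a nonempty open set $U\subset M$, and the pointwise identity $f\,dL=0$ forces $f\rv_U\equiv0$.

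The final — and only substantive — step is to exploit $\lambda\neq0$, which is what distinguishes this statement from Theorem~\ref{thm:unique_continuation}, where one instead invokes the weak unique continuation property for $DL_g$ to propagate the vanishing of $f$ to all of $M$. Here, $\Gamma^\ast$ is a natural linear differential operator and is therefore local, so $f\rv_U\equiv0$ implies $\Gamma^\ast(f)\rv_U\equiv0$. Combined with $\Gamma^\ast(f)=\lambda g$ and the fact that $g$ is nowhere vanishing, this forces $\lambda=0$, contradicting the hypothesis. Hence $dL\equiv0$, so $L_g$ is constant. The only point to get right is this observation that locality of $\Gamma^\ast$ together with $\lambda\neq0$ already yields the conclusion, with no unique-continuation or analytic-continuation input required; I do not expect any serious obstacle beyond recognizing that.
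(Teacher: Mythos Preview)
Your proof is correct and follows essentially the same strategy as the paper: derive $f\,dL=0$ from Lemma~\ref{lem:divGamma}, obtain an open set $U$ on which $f$ vanishes, and then use a local consequence of $\Gamma^\ast(f)=\lambda g$ on $U$ to force $\lambda=0$. The only difference is in that last step: the paper takes the trace of $\Gamma^\ast(f)=\lambda g$ via Lemma~\ref{lem:trGamma} to get the scalar identity $\delta(T(df))=-2kLf-2n\lambda$ and evaluates it on $U$, whereas you invoke the locality of $\Gamma^\ast$ directly to conclude $\Gamma^\ast(f)\rv_U=0$ and hence $\lambda g\rv_U=0$. Your route is marginally cleaner since it avoids the trace computation entirely; the paper's route has the small advantage of producing the explicit equation~\eqref{eqn:LsLs2}, which is reused later in the proof of Theorem~\ref{thm:nontrivial-L-singular}.
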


\begin{proof}
 Suppose to the contrary that $L_g$ is nonconstant.  As in the proof of Theorem~\ref{thm:unique_continuation}, there is a nonempty open set $U\subset M$ such that $f\rv_U\equiv0$.  On the other hand, Lemma~\ref{lem:trGamma} and the condition $\Gamma^\ast(f)=\lambda g$ together imply that
 \begin{equation}
  \label{eqn:LsLs2}
  \delta\left(T(df)\right) = -2kLf - 2n\lambda .
 \end{equation}
 By considering~\eqref{eqn:LsLs2} in $U$, we see that $\lambda=0$, a contradiction.
\end{proof}

We are aware of conditions which imply the weak unique continuation property for two general classes of differential operators.  The first~\cite{Protter1960} applies to differential operators of the form $\Delta^k+\mathrm{l.o.t.}$, where ``$\mathrm{l.o.t.}$'' refers to terms of order at most $\lfloor 3k/2\rfloor$.  The second~\cite{Zuily1983} applies to differential operators for which the roots of the principal symbol have low multiplicity.  The latter situation is most relevant for the study of elliptic CVIs, and can be summarized as follows:

\begin{thm}
 \label{thm:zuily}
 Let $F\colon C^\infty(M)\to C^\infty(M)$ be a natural elliptic differential operator on $(M^n,g)$.  Fix a nonzero vector $v\in\bR^n$.  Given a vector $\xi\in\bR^n$ orthogonal to $v$, denote $s_\xi(t):=\sigma(F)(\xi+tv)$, where $\sigma(F)$ is the principal symbol of $F$.  Let
 \[ N(x) := \#\left\{ r\in\bC \suchthat s_\xi(r)=0, \Imaginary r>0 \right\} \]
 denote the number of complex roots of $s_\xi$ with positive imaginary part is constant over $M$.  Suppose either
 \begin{enumerate}
  \item $N(x)\leq 2$ for all $x\in M$, or
  \item $N(x)=3$ for all $x\in M$.
 \end{enumerate}
 Then $F$ satisfies the weak unique continuation property.
\end{thm}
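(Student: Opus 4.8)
The plan is to reduce the statement to a local Cauchy uniqueness question on an open subset of $\bR^n$ and then quote the Carleman estimates of Zuily~\cite{Zuily1983}. First I would observe that the weak unique continuation property is local in nature: if $\phi\in C^\infty(M)$ satisfies $F(\phi)=0$ and $\Omega$ is the largest open subset of $M$ on which $\phi$ vanishes, then, should $\Omega\neq M$, the function $\phi$ vanishes to infinite order at any boundary point $x_0\in\partial\Omega$, so it suffices to show that a solution of $F(\phi)=0$ which vanishes to infinite order at a point vanishes on a neighborhood of that point. Passing to geodesic normal coordinates centered at $x_0$, this becomes the assertion that $F$, regarded as an elliptic operator with smooth coefficients on a ball in $\bR^n$, has the uniqueness property for the Cauchy problem across the level set of a strongly pseudoconvex weight $\psi$ with $d\psi(x_0)$ in the direction $v$; the standard route to such an assertion is a Carleman estimate $\lV e^{\tau\psi}u\rV\le C\lV e^{\tau\psi}Fu\rV$ valid for all $u$ supported near $x_0$ and all sufficiently large $\tau$.

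Second I would record the structure of the principal symbol that makes the two hypotheses meaningful and stable under the reduction to $\bR^n$. Since $F$ is natural, $\sigma(F)(x,\cdot)$ is a homogeneous polynomial on $T^\ast_xM$ with real coefficients, and since $F$ is elliptic it has no real characteristic covectors; hence for real $\xi\neq0$ orthogonal to $v$ the polynomial $s_\xi(t)=\sigma(F)(x)(\xi+tv)$ has real coefficients and no real root, its roots therefore occur in conjugate pairs off the real axis, and $N(x)$ is well defined as the number of distinct roots in the open upper half-plane. The sum of the multiplicities of those roots is half the order of $F$, but the number of distinct ones may drop at exceptional points --- which is why constancy of $N(x)$ is imposed as a hypothesis --- and ellipticity, the order of $F$, and the multiplicity pattern of $s_\xi$ are all coordinate-independent pointwise data, so the normal-coordinate reduction preserves the hypotheses. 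The content of~\cite{Zuily1983} is that, under exactly the conditions on the complex characteristics encoded by~(1) and~(2), the Carleman estimate of the previous paragraph holds.

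Third I would assemble the pieces: Zuily's estimate yields the local uniqueness statement by the usual cutoff-and-absorb argument, and propagating vanishing along a chain of balls covering the connected manifold $M$ then gives the weak unique continuation property. In the range $N(x)\le2$ one is in the well-understood regime of simple or double complex characteristics, while $N(x)=3$ is the borderline case handled by Zuily's finer symbolic analysis near a configuration with three distinct upper-half-plane roots.

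The only step with genuine content beyond this bookkeeping --- and the one I expect to be the main obstacle --- is matching the clean combinatorial hypotheses~(1) and~(2) on $N(x)$ with the precise technical conditions (on the number and multiplicities of the complex roots of $s_\xi$ and on the class of admissible pseudoconvex weights) under which~\cite{Zuily1983} establishes the Carleman estimate, and in particular confirming that the case $N(x)=3$ lies within Zuily's positive results rather than among the known obstructions to unique continuation for higher-order operators. The reduction to $\bR^n$ and the propagation of vanishing are then routine.
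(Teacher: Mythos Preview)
Your overall strategy --- reduce to a local uniqueness problem and invoke Zuily's results --- is the same as the paper's, though the paper is far terser: it records the root structure of $s_\xi$ and then directly cites \cite[Theorem~1.1]{Zuily1983} for the case $N\leq2$ and \cite[Theorem~3.1]{Zuily1983} (together with \cite{Watanabe1979}) for the case $N=3$, without unpacking the Carleman machinery or the propagation-of-vanishing argument.

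There is, however, one genuine omission in your handling of the naturality hypothesis. You invoke naturality only to conclude that $\sigma(F)$ has real coefficients, but any real scalar differential operator already has real principal symbol; that conclusion does not require naturality. What naturality actually provides --- and what the paper explicitly uses --- is that $\sigma(F)(x,\cdot)$ is invariant under the action of the orthogonal group on $T_x^\ast M$. This is precisely why the quantity $N(x)$ is well defined independently of the choice of $v$ and of $\xi\perp v$: the multiplicity structure of the roots of $s_\xi$ depends only on the angle between $v$ and $\xi$, which here is fixed at $\pi/2$. Without this observation you cannot pass from your single-direction hypothesis on $s_\xi$ to the hypotheses in Zuily's theorems, which concern the characteristic roots relative to \emph{every} noncharacteristic hyperplane near the point in question. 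So the step you correctly flagged as the main obstacle --- matching conditions (1) and (2) to Zuily's technical assumptions --- requires one substantive ingredient beyond bookkeeping, namely the $O(n)$-invariance of the principal symbol, and this is exactly where the word ``natural'' in the statement is doing work.
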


\begin{proof}
 Since $F$ is elliptic, the roots of $s_\xi$ are non-real and come in pairs $a\pm ir$, $r>0$.  Since $F$ is natural, $\sigma(F)$ is invariant under the action of the orthogonal group, and hence the multipicities of these roots depends only on the angle between $v$ and $\xi$.  The conclusion then follows from~\cite[Theorem~1.1]{Zuily1983} if $N\leq2$ and from~\cite[Theorem~3.1]{Zuily1983} (see also~\cite{Watanabe1979}) if $N=3$.
\end{proof}

Theorem~\ref{thm:unique_continuation} and Theorem~\ref{thm:zuily} together yield a proof of Theorem~\ref{thm:intro_Lconst}.

\begin{proof}[Proof of Theorem~\ref{thm:intro_Lconst}]
 Suppose first that $DL_g$, as a differential operator, is of order at most four.  Then Theorem~\ref{thm:zuily} implies that $DL_g$ satisfies the weak unique continuation property.  From Theorem~\ref{thm:unique_continuation} we conclude that if $L_g$ is nonconstant, then $(M^n,g)$ is not $L$-critical.

 Suppose next that $L_g$ has weight $-2k$, $k\in\{1,2,3\}$.  From Weyl's invariant theory, we conclude that $DL_g$ is of order at most $2k$.  Moreover, if $DL_g$ is of order exactly $2k$, then the leading order term of $DL_g$ is a nonzero multiple of $(-\Delta)^k$.  In particular, either $DL_g$ is of order at most four or the principal symbol of $DL_g$ is a nonconstant multiple of the principal symbol of $(-\Delta)^3$.  In either case, Theorem~\ref{thm:unique_continuation} and Theorem~\ref{thm:zuily} together imply that if $L_g$ is nonconstant, then $(M^n,g)$ is not $L$-critical.
\end{proof}

An immediate consequence of Theorem~\ref{thm:intro_Lconst} is that if $L$ is an elliptic CVI of weight $-2k$, $k\leq3$, at $(M^n,g)$, $n\not=2k$, and $(M^n,g)$ is $L$-critical, then $g$ is a critical point of the total $L$-curvature functional in the volume-normalized conformal class $\kC_0$.  One wonders if such manifolds are in fact critical points of the total $L$-curvature functional in the space of volume-normalized Riemannian metrics $\kM_0$.  A partial affirmative answer is given by the following stronger form of Theorem~\ref{thm:intro_nontrivial-Lsingular}:

\begin{thm}
 \label{thm:nontrivial-L-singular}
 Let $L$ be a CVI of weight $-2k$ and let $(M^n,g)$ be an $L$-critical manifold for which $L_g$ is constant.  Let $f\in C^\infty(M)$ be a nontrivial function such that $\Gamma^\ast(f)=\lambda g$ for some $\lambda\in\bR$.  If $L$ is elliptic and variationally stable at $g$, then $f=f_0+\of$, where $\of=(\int f)/(\int 1)$ is the average of $f$ and $f_0\in\kK$.  In particular, if $(M^n,g)$ has no nontrivial conformal Killing fields and $n\not=2k$, then $(M^n,g)$ is a critical point of the total $L$-curvature functional $\mS\colon\kM_0\to\bR$.
\end{thm}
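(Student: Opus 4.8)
The strategy is to reduce the overdetermined equation $\Gamma^\ast(f)=\lambda g$ to a single scalar equation for $f$ by taking its trace, and then to invoke variational stability, which by Corollary~\ref{cor:second_variation} is a positivity statement about the conformal linearization $DL$. First I would note that $L_g$ being constant makes $g$ a critical point of the standard conformal primitive $\mS\colon\kC_0\to\bR$ (Lemma~\ref{lem:crit_mS}), so that variational stability at $g$ is defined. Combining Lemma~\ref{lem:trGamma} with~\eqref{eqn:cvi_linearization} yields the identity $\tr\Gamma^\ast(f)=\tfrac12 DL(f)$; tracing $\Gamma^\ast(f)=\lambda g$ and using that $\lambda$ is constant then gives
\[ DL(f) = 2n\lambda , \]
a constant function on $M$.

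Next I would force this constant to vanish. Decompose $f=\of+f_1$ with $\int_M f_1\,\dvol_g=0$, so that $f_1\in T_g\kC_0$. Since $L_g$ is constant, $DL(\of)=-2kL_g\of$ is constant, hence so is $DL(f_1)=2n\lambda+2kL_g\of$. Integrating $DL(f_1)=-2kL_gf_1-\delta\bigl(T(df_1)\bigr)$ over the closed manifold $M$, the first term integrates to zero because $\int_M f_1\,\dvol_g=0$ and the second because it is the divergence of a one-form; thus $\int_M DL(f_1)\,\dvol_g=0$, and as $DL(f_1)$ is constant with $\Vol_g(M)>0$ we get $DL(f_1)=0$. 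By Corollary~\ref{cor:second_variation}, the hypothesis that $L$ is variationally stable at $g$ means that $D^2\mS(\Upsilon)=\int_M\Upsilon\,DL(\Upsilon)\,\dvol_g$ is a nonnegative quadratic form on $T_g\kC_0$ whose kernel is $T_g\kK$; applied to $f_1$ this gives $D^2\mS(f_1)=0$, hence $f_1\in T_g\kK$. Setting $f_0:=f_1$, we obtain $f=f_0+\of$ with $f_0\in\kK$, as claimed. (The ellipticity hypothesis is not used for this part.)

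For the final assertion, suppose $(M^n,g)$ admits no nontrivial conformal Killing field. Then every $X\in\mK$ satisfies $\mL_Xg=0$, so $T_g\kK=\{0\}$ by~\eqref{eqn:TgkK}; hence $f_0=0$, so $f\equiv\of$ is a constant, which is nonzero because $f$ is nontrivial. By $\bR$-linearity of $\Gamma^\ast$ and $S=-\Gamma^\ast(1)$,
\[ \lambda g = \Gamma^\ast(f) = \of\,\Gamma^\ast(1) = -\of\,S , \]
so $S=\mu g$ with $\mu=-\lambda/\of\in\bR$ (and, since $n\neq 2k$, this conclusion is also provided by Proposition~\ref{prop:schur}). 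Therefore $S-\tfrac12 Lg=\nu g$ with $\nu=\mu-\tfrac12 L_g$ constant, and~\eqref{eqn:first_derivative} gives
\[ D\mS[h] = -\leftllp S-\tfrac12 Lg,\,h\rightrrp = -\nu\int_M \tr_g h\,\dvol_g \]
for every $h\in S^2T^\ast M$, where $\mS(g):=\int_M L_g\,\dvol_g$ is the total $L$-curvature functional. Since $T_g\kM_0=\{\,h\in S^2T^\ast M : \int_M \tr_g h\,\dvol_g=0\,\}$, the right-hand side vanishes on $T_g\kM_0$, so $g$ is a critical point of $\mS\colon\kM_0\to\bR$.

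I expect the main obstacle to be the first step: seeing that tracing $\Gamma^\ast(f)=\lambda g$ collapses an overdetermined tensorial equation into the scalar equation $DL(f)=\text{const}$, together with the global integration that pins the constant to zero. Once $DL(f_1)=0$ is available, variational stability closes the argument at once; the only other point needing care is keeping the decomposition $f=\of+f_1$ straight, so that the linearization identities are applied on the correct subspaces (constants versus mean-zero functions).
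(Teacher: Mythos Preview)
Your proof is correct and follows essentially the same route as the paper: both take the trace of $\Gamma^\ast(f)=\lambda g$ via Lemma~\ref{lem:trGamma} (equivalently~\eqref{eqn:LsLs2}), split off the average $\of$, and then invoke variational stability to force the mean-zero part into $T_g\kK$. The one minor difference is that you obtain the pointwise equation $DL(f_1)=0$ (by observing that $DL(f_1)$ is constant and integrates to zero), whereas the paper instead expands the quadratic form directly to get $\int_M\bigl[\langle T(df_0),df_0\rangle-2kLf_0^2\bigr]\,\dvol=0$; since $DL$ is self-adjoint and nonnegative under variational stability, these are equivalent, and your version is arguably a touch cleaner. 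Your observation that ellipticity is not actually used is also correct and matches the paper's proof.
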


\begin{remark}
 By Theorem~\ref{thm:intro_Lconst}, if $k\leq3$, then we do not need to assume that $L_g$ is constant.
\end{remark}

\begin{proof}
 Since $\Gamma^\ast(f)=\lambda g$, we conclude from~\eqref{eqn:LsLs2} that
 \[ \int_M \left[\lp T(df),df\rp - 2kLf^2\right]\,\dvol = 2n\lambda\int_M f\,\dvol . \]
 Since $L$ is constant, \eqref{eqn:LsLs2} implies that $kL\of=-n\lambda$.  Set $f_0=f-\of$.  Then
 \[ \int_M \left[\lp T(df_0),df_0\rp - 2kLf_0^2\right]\,\dvol = 0 . \]
 Since $L$ is variationally stable, it follows that $f_0\in\kK$.  If $f_0=0$, then $f$ is a (necessarily nonzero) constant.  Hence $S=-(\lambda/f) g$.  We then conclude from~\eqref{eqn:first_derivative} that $(M^n,g)$ is a critical point of $\mS\colon\kM_0\to\bR$.
\end{proof}

As a consequence of Theorem~\ref{thm:intro_Lconst}, we can generalize the characterizations of nontrivial Einstein $R$-critical manifolds~\cite{MiaoTam2011} and nontrivial Einstein $Q_4$-critical manifolds~\cite{LinYuan2015a} to the case of CVIs which are variationally stable and elliptic at Einstein metrics with positive scalar curvature.

\begin{cor}
 \label{cor:characterize_sphere}
 Let $(M^n,g)$ be an Einstein manifold with positive scalar curvature and let $L$ be a CVI which is constant, variationally stable, and elliptic at $g$.  If there is a nonconstant function $f$ such that $\Gamma^\ast(f)=\lambda g$ for some $\lambda\in\bR$, then $(M^n,g)$ is homothetic to the round $n$-sphere and $f=ax+b$ for constants $a,b\in\bR$, where $x$ is a first spherical harmonic.
\end{cor}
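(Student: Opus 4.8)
The plan is to deduce this almost entirely from Theorem~\ref{thm:nontrivial-L-singular} together with the Lichnerowicz--Obata rigidity recorded in Lemma~\ref{lem:lichnerowicz_obata}. First I would check that $(M^n,g)$ satisfies the hypotheses of Theorem~\ref{thm:nontrivial-L-singular}: the existence of a nontrivial $f$ with $\Gamma^\ast(f)=\lambda g$ makes $(M^n,g)$ an $L$-critical manifold, $L_g$ is constant by assumption, and $L$ is elliptic and variationally stable at $g$ by assumption. Hence Theorem~\ref{thm:nontrivial-L-singular} applies and yields $f = f_0 + \of$, where $\of = (\int f)/(\int 1)$ and $f_0 \in T_g\kK$; equivalently, $\mL_X g = 2 f_0 g$ for some conformal Killing field $X$.

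Next I would observe that $f_0 \not\equiv 0$: were $f_0$ to vanish identically, then $f = \of$ would be constant, contradicting the hypothesis that $f$ is nonconstant. Thus $X$ is a nontrivial conformal Killing field and $0 \neq f_0 \in T_g\kK$. Since $(M^n,g)$ is Einstein with positive scalar curvature, Lemma~\ref{lem:lichnerowicz_obata} then forces $(M^n,g)$ to be isometric, hence homothetic, to the round $n$-sphere.

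Finally, to pin down the form of $f$, I would recall from Proposition~\ref{prop:S1_eigenvalue} (as used in the proof of Lemma~\ref{lem:lichnerowicz_obata}) that $-\Delta f_0 = 2J f_0$, and that the equality case of the Lichnerowicz--Obata theorem identifies $2J$ with $\lambda_1(-\Delta)$ on the round sphere. Hence $f_0$ lies in the first eigenspace of $-\Delta$ on $S^n$, which is spanned by the restrictions of the linear coordinate functions from $\bR^{n+1}$; after a rotation and a rescaling this gives $f_0 = a x$ for a constant $a$ and a first spherical harmonic $x$, so $f = f_0 + \of = a x + b$ with $b = \of$.

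I do not anticipate a serious obstacle: the statement is a fairly direct corollary of the two named results. The only points requiring care are the bookkeeping of the hypotheses of Theorem~\ref{thm:nontrivial-L-singular} (and noting, via the Remark following it, that constancy of $L_g$ is automatic when $k\le 3$), excluding the degenerate case $f_0\equiv 0$ by invoking the nonconstancy of $f$, and phrasing the sphere conclusion up to homothety rather than fixing a radius.
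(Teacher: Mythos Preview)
Your proposal is correct and follows essentially the same approach as the paper, which simply states that the result follows immediately from Lemma~\ref{lem:lichnerowicz_obata} and Theorem~\ref{thm:nontrivial-L-singular}. You have correctly unpacked the details of how these two results combine, including the exclusion of the case $f_0\equiv 0$ and the identification of $f_0$ as a first spherical harmonic via the eigenvalue equation $-\Delta f_0 = 2Jf_0$.
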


\begin{remark}
 If $L$ is of weight $-2k\geq-6$, then we do not need to assume that $L_g$ is constant.
\end{remark}

\begin{proof}
 This follows immediately from Lemma~\ref{lem:lichnerowicz_obata} and Theorem~\ref{thm:nontrivial-L-singular}.
\end{proof}

\section{A variational interpretation of $L$-critical metrics}
\label{sec:critical}

In the case when $L$ is the scalar curvature, Miao and Tam~\cite{MiaoTam2008} gave a variational characterization of $L$-critical metrics as critical points of the volume functional on the space of metrics of fixed constant scalar curvature on a compact manifold with fixed boundary metric.  This characterization plays an important role in studying scalar curvature rigidity on manifolds with boundary (e.g.\ \cite{CorvinoEichmairMiao2013,MiaoTam2012,Yuan2016}).  In this section we give an analogous characterization of $L$-critical metrics near a metric $g$ at which $L$ is elliptic and variationally stable, in the case when the underlying manifold is closed (i.e.\ has no boundary).

Roughly speaking, the above claim is that solutions to $\Gamma^\ast(f)=\lambda g$ correspond to critical points of the volume functional on the space of metrics with $L$-constant; see Theorem~\ref{thm:L-critical} below for a precise statement.  There are two approaches to this problem, based on two ways to eliminate the scaling symmetries of the equation $\Gamma^\ast(f)=\lambda g$.  One such symmetry is that if $\Gamma^\ast(f)=\lambda g$, then $\Gamma^\ast(cf)=c\lambda g$ for all $c\in\bR$.  The other is that if $\Gamma_g^\ast(f)=\lambda g$, then $\Gamma_{c^2g}^\ast(f)=c^{-2k}\lambda(c^2g)$, where $-2k$ is the weight of $L$.  Thus we can eliminate the scaling symmetry either by fixing a normalization of $f$ and considering the volume functional on
\[ \mM_L = \left\{ g\in\kM \suchthat \Delta_gL_g = 0 \right\} \]
or instead by fixing the normalization of $L$ by studying the volume functional on
\begin{equation}
 \label{eqn:mMLK}
 \mM_L^K = \left\{ g \in \kM \suchthat L_g=K \right\}
\end{equation}
for some $K\in\bR$ fixed.  We shall take the latter approach, following~\cite{CorvinoEichmairMiao2013}.  The former approach was used by Koiso~\cite{Koiso1979} to study the manifold of metrics with constant scalar curvature; see also~\cite[Section~4.F]{Besse}.

Assume that $\mM_L^K$ is not empty.  We begin by giving sufficient conditions on $L$ and $g\in\mM_L^K$ to guarantee that there is a neighborhood of $g$ in $\mM_L^K$ which has the structure of a Banach manifold when completed with respect to the $C^{\ell,\alpha}$-norm for $\ell$ sufficiently large.  A key point is that these conditions depend only on the conformal linearization of $L$ at $g$.

\begin{lem}
 \label{lem:mMLK_manifold}
 Let $L$ be a CVI, fix $K\in\bR$, and let $\mM_L^K$ be as in~\eqref{eqn:mMLK}.  If $g\in\mM_L^K$ is such that $L$ is elliptic at $g$ and $(M,g)$ is not $L$-singular, then there is a neighborhood $U\subset\kM$ of $g$ such that for every $\og\in U$, there is a function $u\in C^\infty(M)$ such that $e^{2u}\og\in\mM_L^K$.  Moreover, with respect to the identification $T_g\kM\cong S^2T^\ast M$,
 \begin{equation}
  \label{eqn:TgmMLK}
  T_g\mM_L^K \cong \left\{ h\in S^2T^\ast M \suchthat DL_g[h] = 0 \right\} .
 \end{equation}
\end{lem}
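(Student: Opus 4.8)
The plan is to deduce both assertions from the implicit function theorem on Banach spaces, working throughout with the $C^{\ell,\alpha}$-completions of $\kM$ and $C^\infty(M)$ for some fixed $\ell$ large enough that $L$ is a $C^1$ map between the relevant spaces. The key structural input is that, under the stated hypotheses, $L$ is a submersion at $g$: since $L$ is elliptic at $g$ and $(M,g)$ is not $L$-singular, we have $\ker\Gamma^\ast=\{0\}$, and Lemma~\ref{lem:injective_symbol} gives $C^\infty(M)=\ker\Gamma^\ast\oplus\im\Gamma=\im\Gamma$. In H\"older spaces this says that $\Gamma$ maps $C^{\ell,\alpha}(S^2T^\ast M)$ onto $C^{\ell-m,\alpha}(M)$, where $m\le 2k$ is the order of $\Gamma$ (by Weyl's invariant theory). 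Thus $L\colon\kM\to C^\infty(M)$, restricted to the $C^{\ell,\alpha}$-completion, is a $C^1$ submersion at $g$.

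Given this, the implicit function theorem shows that a neighbourhood of $g$ in $\mM_L^K=L^{-1}(K)$ is a $C^1$ Banach submanifold of the space of $C^{\ell,\alpha}$-metrics, with tangent space at $g$ equal to $\ker\Gamma=\{h\suchthat DL_g[h]=0\}$. Since $\Gamma^\ast$ has injective principal symbol, the composition $\Gamma\circ\Gamma^\ast$ is elliptic and self-adjoint, and the Berger--Ebin splitting together with elliptic regularity for $\Gamma\circ\Gamma^\ast$ shows that every element of $\ker\Gamma$ is in fact smooth; this yields the identification~\eqref{eqn:TgmMLK}.

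For the first assertion, consider the map $\Phi(\og,u):=L_{e^{2u}\og}-K$, defined for $\og$ near $g$ in the $C^{\ell,\alpha}$-metrics and $u$ near $0$ in $C^{\ell,\alpha}(M)$ and taking values in $C^{\ell-m,\alpha}(M)$; note $\Phi(g,0)=0$. Its partial derivative in $u$ at $(g,0)$ is the conformal linearization $\Upsilon\mapsto DL_g(\Upsilon)=-2kK\Upsilon-\delta(T(d\Upsilon))$ of~\eqref{eqn:cvi_linearization}. Because $L$ is elliptic at $g$ this operator is elliptic, and because $L$ is a CVI it is formally self-adjoint with respect to $\llp\cdot,\cdot\rrp$; hence it is Fredholm of index zero, and one checks that its kernel is trivial, so that $DL_g(\cdot)\colon C^{\ell,\alpha}(M)\to C^{\ell-m,\alpha}(M)$ is an isomorphism. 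The implicit function theorem then produces a neighbourhood $U$ of $g$ in the $C^{\ell,\alpha}$-metrics and a $C^1$ map $\og\mapsto u(\og)\in C^{\ell,\alpha}(M)$ with $u(g)=0$ and $L_{e^{2u(\og)}\og}=K$. If $\og$ is smooth, then $u(\og)$ solves the elliptic equation $L_{e^{2u}\og}=K$ with smooth coefficients (ellipticity persists near $g$ by Lemma~\ref{lem:elliptic_cones}), hence $u(\og)\in C^\infty(M)$ by elliptic regularity; restricting $U$ to the smooth metrics gives the claim.

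The main obstacle is the verification that the conformal linearization $DL_g$ has trivial kernel. Any $\Upsilon$ in its kernel satisfies $\Gamma[2\Upsilon g]=0$ by~\eqref{eqn:relate_derivatives}, i.e.\ $2\Upsilon g\in\ker\Gamma$; since $L_g=K$ is constant, Lemma~\ref{lem:trGamma} and Lemma~\ref{lem:divGamma} force $\Gamma^\ast(\Upsilon)$ to be trace-free and divergence-free, and Lemma~\ref{lem:dL_Lie_derivative} controls the Lie-derivative directions, so the task is to combine these facts with $\ker\Gamma^\ast=\{0\}$ to conclude $\Upsilon\equiv0$. Everything else is a routine application of the implicit function theorem and elliptic regularity, as in the treatments of the scalar curvature~\cite{CorvinoEichmairMiao2013} and the fourth-order $Q$-curvature~\cite{LinYuan2015b}.
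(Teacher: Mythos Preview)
Your overall strategy is the paper's: define $\mF(\og,u)=L(e^{2u}\og)$, observe that $D_2\mF(g,0)=DL_g$ is the conformal linearization, argue that it is surjective, and apply the Implicit Function Theorem. The paper dispatches this in three sentences, asserting that ellipticity of $L$ together with $\ker\Gamma^\ast=\{0\}$ and Lemma~\ref{lem:injective_symbol} give surjectivity of $D_2\mF(g,0)$, and then reading off both conclusions from IFT. You add helpful detail on H\"older completions and on elliptic regularity to recover smoothness of $u(\og)$, and you derive~\eqref{eqn:TgmMLK} separately from the surjectivity of the full metric linearization $\Gamma$; these are reasonable elaborations of the same argument.

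Where your proposal falls short is precisely the step you label ``the main obstacle'': you never actually show that $\ker DL_g=\{0\}$. You correctly observe that for $\Upsilon\in\ker DL_g$ with $L_g$ constant, $\Gamma^\ast(\Upsilon)$ is trace-free (Lemma~\ref{lem:trGamma}) and divergence-free (Lemma~\ref{lem:divGamma}), but knowing that $\Gamma^\ast(\Upsilon)$ is a TT-tensor does not by itself force $\Gamma^\ast(\Upsilon)=0$, and the hypothesis $\ker\Gamma^\ast=\{0\}$ only says that \emph{if} $\Gamma^\ast(\Upsilon)=0$ then $\Upsilon=0$; it does not rule out nonzero $\Upsilon$ with $\tr\Gamma^\ast(\Upsilon)=0$ but $\Gamma^\ast(\Upsilon)\neq0$. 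Your reference to Lemma~\ref{lem:dL_Lie_derivative} adds nothing here, since that lemma only identifies Lie-derivative directions inside $\ker\Gamma$ when $L$ is constant. So the outline you give does not close, and the earlier phrase ``one checks that its kernel is trivial'' is left unjustified. The paper, by contrast, does not attempt this verification and simply cites Lemma~\ref{lem:injective_symbol} for surjectivity of $D_2\mF(g,0)$; whatever one makes of that citation, your proposal explicitly leaves the point open, and that is the gap.
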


\begin{proof}
 Consider the map $\mF\colon\kM\times C^\infty(M)\to C^\infty(M)$ given by $\mF(\og,u)=L(e^{2u}\og)$.  With $D_2$ denoting differentiation in the $C^\infty(M)$ factor, we see that $D_2\mF(g,0)(\Upsilon)=DL_g(\Upsilon)$.  Since $L$ is elliptic and $\ker\Gamma^\ast=\{0\}$, Lemma~\ref{lem:injective_symbol} implies that $D_2\mF(g,0)$ is surjective.  The Implicit Function Theorem then implies that there is a neighborhood $U\subset\kM$ of $g$ and a smooth function $\Phi\colon U\to C^\infty(M)$ such that $\exp\left(2\Phi(\og)\right)\og\in\mM_L^K$ for every $\og\in U$.  The identification~\eqref{eqn:TgmMLK} readily follows.
\end{proof}

An adaption of the argument of Corvino, Eichmair and Miao~\cite{CorvinoEichmairMiao2013} leads to the following variational characterization of solutions to $\Gamma^\ast(f)=g$.

\begin{thm}
 \label{thm:L-critical}
 Let $M$ be a compact manifold and let $K\in\bR$.  Let $L$ be a CVI and assume the space $\mM_L^K$ defined by~\eqref{eqn:mMLK} is nonempty.  If $g$ is such that $L$ is elliptic at $g$ and $\ker\Gamma^\ast=\{0\}$, then $g$ is a critical point of $\Vol\colon\mM_L^K\to\bR$ if and only if there is an $f\in C^\infty(M)$ such that $\Gamma^\ast(f)=g$.
\end{thm}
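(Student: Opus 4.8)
The plan is to interpret $g$ as a constrained critical point via the submanifold structure of $\mM_L^K$, reduce criticality to an $L^2$-orthogonality statement, and then use the ellipticity hypotheses to promote that orthogonality to the existence of a smooth $f$ with $\Gamma^\ast(f)=g$.

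First I would record the relevant local structure. The hypotheses that $L$ is elliptic at $g$ and that $(M,g)$ is not $L$-singular are precisely those of Lemma~\ref{lem:mMLK_manifold}; completing in a sufficiently high $C^{\ell,\alpha}$-norm, that lemma gives that a neighborhood of $g$ in $\mM_L^K$ is a Banach submanifold of $\kM$ with
\[ T_g\mM_L^K \cong \left\{ h\in S^2T^\ast M \suchthat DL_g[h]=0 \right\} = \ker\Gamma . \]
(Alternatively: $\ker\Gamma^\ast=\{0\}$ together with Lemma~\ref{lem:injective_symbol} shows $\Gamma$ is surjective with bounded right inverse, so $L$ is a submersion at $g$ and $\mM_L^K=L^{-1}(K)$ is a submanifold near $g$ with this tangent space.) Next I would compute $D\Vol_g$: since $D\dvol_g[h]=\tfrac12(\tr_g h)\dvol_g$ and $\tr_g h=\lp g,h\rp_g$,
\[ D\Vol_g[h] = \frac12\int_M \tr_g h\,\dvol_g = \frac12\,\llp g,h\rrp . \]
Hence $g$ is a critical point of $\Vol\colon\mM_L^K\to\bR$ if and only if $\llp g,h\rrp=0$ for all $h\in T_g\mM_L^K=\ker\Gamma$; that is, if and only if $g$ is $L^2$-orthogonal to $\ker\Gamma$.

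The main step is then to identify $(\ker\Gamma)^{\perp}$ with $\im\Gamma^\ast$. Since $L$ is elliptic at $g$, Lemma~\ref{lem:injective_symbol} shows $\Gamma^\ast$ has injective principal symbol, so by the Berger--Ebin splitting theorem~\cite{BergerEbin1969} the range $\im\Gamma^\ast$ is closed in $C^\infty(S^2T^\ast M)$ and there is an $L^2$-orthogonal decomposition
\[ C^\infty(S^2T^\ast M) = \ker\Gamma \oplus \im\Gamma^\ast , \]
using that the formal adjoint of $\Gamma^\ast$ is $\Gamma$. Writing $g=h_0+\Gamma^\ast(f)$ with $h_0\in\ker\Gamma$ and observing that $\llp \Gamma^\ast(f),h\rrp=\llp f,\Gamma[h]\rrp=0$ for every $h\in\ker\Gamma$, the condition that $g$ be orthogonal to $\ker\Gamma$ reduces to $h_0\perp\ker\Gamma$, hence $h_0=0$ on taking $h=h_0$; thus $g=\Gamma^\ast(f)$. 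Conversely, if $g=\Gamma^\ast(f)$ then the same computation shows $\llp g,h\rrp=0$ for all $h\in\ker\Gamma$, so $D\Vol_g$ annihilates $T_g\mM_L^K$ and $g$ is critical. Elliptic regularity for $\Gamma^\ast$ (which has injective symbol, so that $\Gamma\Gamma^\ast$ is elliptic on $C^\infty(M)$) guarantees that the $f$ produced above is smooth, which also reconciles the argument with the $C^{\ell,\alpha}$-completions used for the manifold structure of $\mM_L^K$.

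The crux is the passage from the variational conclusion ``$g$ is $L^2$-orthogonal to $\ker\Gamma$'' to the PDE conclusion ``$\Gamma^\ast(f)=g$ for some smooth $f$'': a Lagrange multiplier for the constrained problem is a priori only a continuous linear functional, and it is precisely the closed-range property coming from the injective symbol of $\Gamma^\ast$ --- i.e.\ the ellipticity of $L$ --- that lets one represent it by a smooth symmetric tensor in $\im\Gamma^\ast$. The remaining points --- choosing Banach completions in which $\mM_L^K$ is a submanifold near $g$ and $\Vol$ is $C^1$, and bootstrapping back to smooth objects --- are routine given Lemma~\ref{lem:mMLK_manifold} and standard elliptic theory.
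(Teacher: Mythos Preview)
Your argument is correct and arrives at the same conclusion by a genuinely different route than the paper. You recognize criticality as the $L^2$-orthogonality condition $g\perp\ker\Gamma$ and then invoke the Berger--Ebin splitting $C^\infty(S^2T^\ast M)=\ker\Gamma\oplus\im\Gamma^\ast$ (the companion to the splitting of $C^\infty(M)$ stated in Lemma~\ref{lem:injective_symbol}, and valid for the same reason: $\Gamma^\ast$ has injective principal symbol) to conclude $g\in\im\Gamma^\ast$. The paper instead works constructively: for an arbitrary $h\in T_g\kM$ it uses the conformal correction of Lemma~\ref{lem:mMLK_manifold} to build a curve $e^{2\Upsilon_t}(g+th)$ in $\mM_L^K$, then \emph{explicitly} produces the Lagrange multiplier as the unique solution of $DL(f)=2n$ (using $\ker\Gamma^\ast=\{0\}$ and the splitting of $C^\infty(M)$), and finally exploits the self-adjointness of the conformal linearization to show $\int\tr h=\int\langle h,\Gamma^\ast(f)\rangle$ for every $h$. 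Your approach is shorter and makes the Lagrange-multiplier structure transparent; the paper's approach is more hands-on and has the minor advantage of identifying $f$ concretely as the solution of $DL(f)=2n$, which may be useful in applications.
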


\begin{proof}
 Suppose that $g$ is a critical point of $\Vol\colon\mM_{L}^K\to\bR$.  Let $h\in T_g\kM$ and set $g_t=g+th$ for $t\in(-\varepsilon,\varepsilon)$ with $\varepsilon>0$ sufficiently small.  By Lemma~\ref{lem:mMLK_manifold}, there is a smooth family $\Upsilon_t\in C^\infty(M)$ such that $\Upsilon_0=0$ and $L_{\og_t}=K$ for $\og_t:=e^{2\Upsilon_t}g_t\in\mM_{L}^K$.  By construction of $\og_t$, we have that
 \begin{equation}
  \label{eqn:L-critical_obs1}
  0 = \left.\frac{\partial}{\partial t}\right|_{t=0}L_{\og_t} = DL[h+2\Upsilon^\prime g] = DL[h] + DL(\Upsilon^\prime),
 \end{equation}
 where $\Upsilon^\prime:=\left.\frac{\partial}{\partial t}\right|_{t=0}\Upsilon_t$.  Since $g$ is a critical point of $\Vol$ on $\mM_L^K$, we also have that
 \begin{equation}
  \label{eqn:L-critical_obs2}
  0 = 2\left.\frac{\partial}{\partial t}\right|_{t=0}\Vol(\og_t) = \int_M \left[ \tr_g h + 2n\Upsilon^\prime\right]\,\dvol_g .
 \end{equation}
 Since $\ker\Gamma^\ast=\{0\}$ and $L$ is elliptic at $g$, Lemma~\ref{lem:injective_symbol} implies that there is a unique function $f$ such that $DL(f)=2n$.  Therefore
 \[ 2n\int_M \Upsilon^\prime\,\dvol = \int_M \Upsilon^\prime\,DL(f)\,\dvol = \int_M f\,DL(\Upsilon^\prime)\,\dvol , \]
 where the last equality uses the fact that $L$ is conformally variational.  Using~\eqref{eqn:L-critical_obs1}, we conclude that
 \[ 2n\int_M \Upsilon^\prime\,\dvol = -\int_M f\,DL[h]\,\dvol = -\int_M \lp h, \Gamma^\ast(f)\rp\,\dvol . \]
 Equation~\eqref{eqn:L-critical_obs2} then implies that
 \[ \int_M \tr h\,\dvol = \int_M \lp h,\Gamma^\ast(f)\rp\,\dvol . \]
 Since $h$ is arbitrary, we conclude that $\Gamma^\ast(f)=g$.

 Conversely, suppose that $f$ satisfies $\Gamma^\ast(f)=g$.  Let $h\in T_g\mM_L^K$ and let $t\mapsto g_t$ be a smooth curve in $\mM_{L}^K$ with $g_0=g$ and $\left.\frac{\partial}{\partial t}\right|_{t=0}g_t=h$.  We then compute that
 \[ 2\left.\frac{\partial}{\partial t}\right|_{t=0}\Vol_{g_t}(M) = \int_M \lp h, g\rp\,\dvol = \int_M \lp h, \Gamma^\ast(f)\rp\,\dvol = \int_M f\,DL[h]\,\dvol = 0 , \]
 where the last equality follows from the characterization~\eqref{eqn:TgmMLK} of $T_g\mM_L^K$.
\end{proof}

\begin{remark}
 We expect similar results to hold for compact manifolds with boundary.  Carrying this out requires a discussion of scalar boundary invariants associated to $L$, which is beyond the scope of this article.  Suitable boundary invariants are known in certain cases, such as the $\sigma_k$-curvatures~\cite{Chen2009s} and the fourth-order $Q$-curvature~\cite{Case2015b}.
\end{remark}

Generalizing Theorem~\ref{thm:intro_corollary}, for an elliptic CVI $L$ on a compact Riemannian manifold $(M^n,g)$ with boundary, the problem of simultaneously prescribing $L$ and $\Vol(M)$ is locally (near $g$ in $\kM$) unobstructed if $(M^n,g)$ is not $L$-critical.  This was observed in the case when $L$ is the scalar curvature by Corvino, Eichmair and Miao~\cite{CorvinoEichmairMiao2013}.

\begin{thm}
 \label{thm:corvino_eichmair_miao}
 Let $L$ be a CVI which is elliptic at a compact Riemannian manifold $(M^n,g)$ without boundary.  If $(M^n,g)$ is not $L$-critical, then there are neighborhoods $U\subset\kM$ of $g$ and $V\subset C^\infty(M) \oplus \mathbb{R}$ of $(L(g), \Vol(g))$ such that for any $(\psi, v) \in V$, there is a metric $\hg\in U$ such that $L(\hg)=\psi$ and $\Vol_{\hg}(M) = v$.
\end{thm}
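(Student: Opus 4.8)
The plan is to combine the local surjectivity statement underlying Theorem~\ref{thm:intro_corollary} with an extra parameter encoding the volume, and then apply the Implicit Function Theorem to the resulting augmented map. Since $(M^n,g)$ is not $L$-critical, it is in particular not $L$-singular, so $\ker\Gamma^\ast=\{0\}$ and Lemma~\ref{lem:injective_symbol} gives the splitting $C^\infty(M)=\ker\Gamma^\ast\oplus\im\Gamma=\im\Gamma$; equivalently, $\Gamma\colon T_g\kM\to C^\infty(M)$ is surjective with split kernel. Thus $L\colon\kM\to C^\infty(M)$ is a submersion at $g$ when the relevant spaces are completed in suitable H\"older norms $C^{\ell,\alpha}$, exactly as in the proof of Theorem~\ref{thm:intro_corollary}. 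The new ingredient is that we must also hit prescribed volumes near $\Vol_g(M)$ without destroying the prescribed value of $L$.

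First I would set up the map $\mF\colon\kM\to C^\infty(M)\oplus\bR$ by $\mF(\og)=(L_{\og},\Vol_{\og}(M))$, working on the Banach completions so that $\mF$ is $C^1$ near $g$. Its linearization at $g$ is $D\mF_g[h]=(\Gamma[h],\tfrac12\int_M\tr_g h\,\dvol_g)$. To show this is surjective, given a target $(\varphi,a)\in C^\infty(M)\oplus\bR$ I first use surjectivity of $\Gamma$ to pick $h_0$ with $\Gamma[h_0]=\varphi$; then I must correct the volume term without changing $\Gamma[h_0]$. For this I need a single tensor $h_1$ with $\Gamma[h_1]=0$ but $\int_M\tr_g h_1\,\dvol_g\neq 0$. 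Here is where the hypothesis that $(M^n,g)$ is not $L$-critical enters decisively: if every $h$ in $\ker\Gamma$ satisfied $\int_M\tr_g h\,\dvol_g=0$, then the linear functional $h\mapsto\int_M\tr_g h\,\dvol_g=\llp g,h\rrp$ would vanish on $\ker\Gamma$, hence lie in $(\ker\Gamma)^\perp=\overline{\im\Gamma^\ast}$; since $L$ is elliptic, $\Gamma^\ast$ has injective principal symbol and closed range, so $g\in\im\Gamma^\ast$, i.e.\ $\Gamma^\ast(f)=g$ for some $f$, contradicting that $g$ is not $L$-critical (take $\lambda=1$). So such an $h_1$ exists, and then $h:=h_0+c\,h_1$ for the appropriate constant $c$ solves $D\mF_g[h]=(\varphi,a)$. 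Together with the fact that $\ker D\mF_g$ splits (it is a finite-codimension refinement of $\ker\Gamma$, which splits by Lemma~\ref{lem:injective_symbol} and the Berger--Ebin theorem), this shows $D\mF_g$ is a split surjection.

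With $D\mF_g$ a split surjective bounded operator between Banach spaces, the Implicit Function Theorem on Banach spaces (see~\cite{Lang1995}) yields neighborhoods $U\subset\kM$ of $g$ and $V$ of $\mF(g)=(L_g,\Vol_g(M))$ such that every $(\psi,v)\in V$ is attained: there is $\hg\in U$ with $L_{\hg}=\psi$ and $\Vol_{\hg}(M)=v$. A standard elliptic-regularity bootstrap (as in the proof of Theorem~\ref{thm:intro_corollary}) upgrades the $C^{\ell,\alpha}$ solution to a smooth metric when $\psi$ is smooth, so one may take $U$ and $V$ inside the smooth categories as stated. The main obstacle is the surjectivity argument for $D\mF_g$ --- specifically, extracting the tensor $h_1\in\ker\Gamma$ with nonzero integrated trace; this is precisely the point at which ``not $L$-critical'' (rather than merely ``not $L$-singular'') is needed, and it is the substantive content beyond a verbatim repetition of Theorem~\ref{thm:intro_corollary}. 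Everything else --- the Banach-space setup, the closed-range and splitting facts, and the regularity bootstrap --- is routine given the results already established.
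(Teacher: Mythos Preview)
Your proof is correct and follows essentially the same approach as the paper: both define the map $\og\mapsto(L_{\og},\Vol_{\og}(M))$, show its linearization at $g$ is a split surjection, and invoke the Implicit Function Theorem. The only difference is in verifying surjectivity---the paper computes the adjoint $(f,\lambda)\mapsto\Gamma^\ast(f)+\tfrac{\lambda}{2}g$ and observes its kernel is trivial exactly when $g$ is not $L$-critical, whereas you argue on the primal side by producing $h_1\in\ker\Gamma$ with nonzero integrated trace; these are dual formulations of the same fact.
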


\begin{proof}
Consider the map $\Theta\colon\kM \to C^\infty(M) \oplus \bR$ defined by
\[ \Theta(g) = \left(L_g, \Vol_g(M)\right) . \]
The linearization $D\Theta_g\colon S^2T^\ast M \to C^\infty(M)\oplus\bR$ of $\Theta$ at $g$ is given by
\[ D\Theta_g[h] = \left(DL_g[h], D\Vol_g(M)[h]\right) . \]
It follows that its adjoint $\left(D\Theta_g\right)^\ast\colon C^\infty(M)\oplus\bR \to S^2T^\ast M$ is given by
\[ \left(D\Theta_g\right)^\ast(f,\lambda) = \Gamma^\ast(f) + \frac{\lambda}{2}g . \]
Since $L$ is elliptic at $g$, an argument analogous to the proof of Lemma~\ref{lem:injective_symbol} implies that $\left(D\Theta_g\right)^\ast$ has injective principle symbol.  Since $(M^n,g)$ is not $L$-critical, it holds that $\ker\left(D\Theta_g\right)^\ast=\{0\}$.  Therefore $C^\infty(M)=\im D\Theta_g$.  The conclusion now follows from the Implicit Function Theorem on Banach spaces.
\end{proof}

\section{Rigidity}
\label{sec:rigidity}

It is known that the scalar curvature~\cite{FischerMarsden1975} and the $Q$-curvature~\cite{LinYuan2015a} are locally rigid at flat metrics: Locally in $\kM$, flat metrics are the only metrics for which the corresponding scalar invariant is nonnegative.  Indeed, Schoen and Yau~\cite{SchoenYau1979_torus,SchoenYau1979s} and Gromov and Lawson~\cite{GromovLawson1980,GromovLawson1983} proved global rigidity of the scalar curvature, and Lin and Yuan~\cite{LinYuan2015a} have given partial results towards global rigidity for the $Q$-curvature.

In this section we study the corresponding question for CVIs: Under what condition can we determine that a CVI is locally rigid at flat metrics?  Similar to Section~\ref{sec:second_derivative}, we address this question through a spectral invariant determined by the CVI in question.

\begin{defn}
 A CVI $L$ of weight $-2k$, $k \in \mathbb{N}$, is \emph{infinitesimally rigid at flat metrics} if for every flat manifold $(M^n,g)$, it holds that
 \begin{equation}
  \label{eqn:infinitesimally_rigid}
  \int_M D^2L_g[h,h]\,\dvol_g \leq 0
 \end{equation}
 for all $h\in\ker\delta_g$, with equality if and only if
 \[ h \in \mP := \left\{ h\in S^2(M) \suchthat \nabla h = 0\right\} . \]
\end{defn}

Here $S^2(M)$ denotes the space of smooth sections of $S^2T^\ast M$.  Note that if $(M^n,g)$ is a flat manifold, then $\mP$ locally parameterizes the space of flat metrics near $g$ in $\kM$.

A key consequence of infinitesimal rigidity is the following spectral gap.

\begin{lem}
 \label{lem:flat_spectrum}
 Let $L$ be a CVI of weight $-2k$, $k\in\bN$, which is infinitesimally rigid at flat manifolds.  If $(M^n,g)$ is a closed flat manifold with $n\geq2k$ then there is a constant $C>0$ such that
 \begin{equation}
  \label{eqn:flat_spectrum}
  \int_M D^2L[h,h]\,\dvol \leq -C\int_M \lv\nabla^k h\rv^2\,\dvol
 \end{equation}
 for all $h\in\ker\delta$.
\end{lem}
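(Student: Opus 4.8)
The plan is to argue by contradiction using a compactness argument in the Sobolev space $W^{k,2}$ of sections of $S^2T^\ast M$. Suppose the claimed spectral gap fails; then for every $C>0$ there is an $h\in\ker\delta$ violating~\eqref{eqn:flat_spectrum}. Taking $C=1/j$ produces a sequence $h_j\in\ker\delta$ which, after normalization, we may assume satisfies $\int_M\lv\nabla^k h_j\rv^2\,\dvol=1$ together with $\int_M D^2L[h_j,h_j]\,\dvol > -\tfrac{1}{j}$. The key point is that on a closed flat manifold the quadratic form $\int_M D^2L_g[h,h]\,\dvol$ is, by Weyl's invariant theory and the homogeneity of $L$, a constant-coefficient quadratic form in the derivatives of $h$ of total order $2k$; modulo lower-order terms and integration by parts it is controlled by $\int\lv\nabla^k h\rv^2$, so the functional $h\mapsto\int D^2L[h,h]\,\dvol$ is continuous on $W^{k,2}$.

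First I would set up the function space carefully: work in the Hilbert space $\mathcal{H}$ obtained by completing $\{h\in S^2(M):\delta h=0\}$ in the norm $\lV h\rV^2:=\int_M(\lv\nabla^k h\rv^2+\lv h\rv^2)\,\dvol$. Since $D^2L[h,h]$ is a divergence-form expression of order $2k$ in $h$, after integrating by parts the bilinear form $Q(h):=\int_M D^2L[h,h]\,\dvol$ extends to a bounded symmetric form on $\mathcal{H}$. Next, from the normalized sequence I would extract a subsequence converging weakly in $\mathcal{H}$ and strongly in $W^{k-1,2}$ (Rellich). The leading term of $Q$ is weakly continuous up to a definite negative multiple of $\int\lv\nabla^k h\rv^2$ — more precisely, $Q(h)\le -c_0\int\lv\nabla^k h\rv^2+C_0\lV h\rV_{W^{k-1,2}}^2$ is \emph{false} in general, so instead I would decompose $Q$ into its principal part $Q_0$ (a nonpositive constant-coefficient form, nonpositivity being exactly a consequence of infinitesimal rigidity applied on flat tori where parallel tensors other than those in $\mP$ are absent after passing to covers) plus a lower-order remainder that is weakly continuous by Rellich. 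Then the weak limit $h_\infty$ satisfies $Q(h_\infty)\ge 0$, and by the equality case of infinitesimal rigidity $h_\infty\in\mP$, hence $\nabla h_\infty=0$ and in particular $\nabla^k h_\infty=0$. But strong $W^{k-1,2}$ convergence plus the vanishing of the top-order part forces $\lV\nabla^k h_j\rV_{L^2}\to 0$, contradicting the normalization $\int\lv\nabla^k h_j\rv^2=1$.

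The main obstacle is establishing that the \emph{principal symbol} of the quadratic form $Q$ is nonpositive and degenerates exactly along $\nabla^k h=0$; infinitesimal rigidity as stated is a global statement on arbitrary closed flat manifolds, and one must leverage it on a well-chosen family — flat tori $T^n$ and their finite covers — where $\ker\delta$ is explicit (constant tensors plus divergence-free Fourier modes) and $\mP$ consists only of the parallel (constant) tensors, to read off that the Fourier symbol $\xi\mapsto$ (symbol of $D^2L$ at frequency $\xi$) is $\le 0$ and vanishes only at $\xi=0$ on the divergence-free subspace. Once this symbol estimate is in hand, a Gårding-type inequality on general closed flat $(M^n,g)$ gives $Q(h)\le -c\int\lv\nabla^k h\rv^2+C\int\lv h\rv^2$ for $h\in\ker\delta$, and the compactness argument above closes the gap; alternatively, this Gårding inequality \emph{is} the statement~\eqref{eqn:flat_spectrum} once one absorbs the $\int\lv h\rv^2$ term using that $\mathcal{P}\cap\ker\delta$ together with the equality case handles the zero modes. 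The hypothesis $n\ge 2k$ is used to ensure $L$ (hence $D^2L$) is defined in the relevant dimension, as in Section~\ref{sec:bg}.
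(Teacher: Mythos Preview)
Your compactness-and-Rellich strategy has a structural problem: on a closed flat manifold the quadratic form $Q(h)=\int_M D^2L[h,h]\,\dvol$ has \emph{no lower-order terms at all}. By Weyl's invariant theory and the homogeneity of $L$, modulo divergences $D^2L[h,h]$ is a linear combination of complete contractions of $\nabla^kh\otimes\nabla^kh$; since $g$ is flat, derivatives commute freely, and once you impose $\delta h=0$ the only surviving contractions are $\lv\nabla^kh\rv^2$ and $\lv\nabla^k\tr h\rv^2$. Hence
\[
Q(h)=A\int_M\lv\nabla^kh\rv^2\,\dvol+B\int_M\lv\nabla^k\tr h\rv^2\,\dvol
\]
for universal constants $A,B$. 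There is no ``lower-order remainder that is weakly continuous by Rellich'' to peel off; the form $Q$ is purely top order, so it is not weakly continuous on $W^{k,2}$ and your passage from $Q(h_j)>-1/j$ to $Q(h_\infty)\geq0$ does not follow. Likewise, a G{\aa}rding inequality of the shape $Q(h)\leq -c\int\lv\nabla^kh\rv^2+C\int\lv h\rv^2$ cannot be upgraded to~\eqref{eqn:flat_spectrum} merely by ``absorbing'' the zero-order term: you would need $c$ to dominate $C$ times the Poincar\'e constant, and nothing in your argument secures that.

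The paper exploits the explicit form of $Q$ directly. It introduces the $L^2$-orthogonal splitting $\ker\delta=S^2_{TT}(M)\oplus E(C^\infty(M))$, where $E(\Upsilon)=\nabla^2f-\tfrac{1}{n}\Delta f\,g+\tfrac{1}{n}\Upsilon g$ with $\Delta f=-\tfrac{1}{n-1}(\Upsilon-\overline{\Upsilon})$. Testing $Q$ on a non-parallel $h_{TT}\in S^2_{TT}$ forces $A<0$ by infinitesimal rigidity; testing on $h_f=E(\Upsilon)$ and using $\int\lv\nabla^{k+2}f\rv^2=\int\lv\Delta\nabla^kf\rv^2$ (flatness) gives $Q(h_f)=\bigl(\tfrac{A}{n-1}+B\bigr)\int\lv\nabla^k\Upsilon\rv^2$, whence $\tfrac{A}{n-1}+B<0$. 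One then checks that the two pieces recombine to yield~\eqref{eqn:flat_spectrum} with $C=\min\{-A,\,-A-(n-1)B\}>0$. Your Fourier-symbol idea on tori is not wrong---it would recover exactly these two sign conditions (they are the negativity of the symbol on the TT and pure-trace subspaces of $\ker\sigma(\delta)$)---but once you see that $Q$ has the two-term form above, the symbol analysis and the entire Sobolev-compactness scaffolding are superfluous.
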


\begin{proof}
 Since $(M^n,g)$ is flat and $L$ has weight $-2k$, it follows from Weyl's invariant theory and the formulae for the variations of the Levi-Civita connection and the Riemann curvature tensor that, modulo divergences, $D^2L[h,h]$ is a polynomial in the complete contractions of $\nabla^k h \otimes \nabla^kh$.  Since $(M^n,g)$ is flat, we may reorder the derivatives.  We may even do so across the tensor product, working modulo divergences.  With these freedoms, we see that if $\delta_g h=0$, then the only nonvanishing complete contractions are $\lv\nabla^kh\rv^2$ and $\lv\nabla^k \tr h\rv^2$, modulo divergences.  Hence there are constants $A, B\in\bR$ independent of $h$ such that
 \begin{equation}
  \label{eqn:D2L_generic_form}
  \int_M D^2L[h,h]\,\dvol = A\int_M \lv\nabla^kh\rv^2\,\dvol + B\int_M \lv\nabla^k \tr h\rv^2\,\dvol
 \end{equation}
 for all $h\in\ker\delta_g$.

 Define the injective map $E\colon C^\infty(M)\to S^2(M)$ by $E(\Upsilon):=\nabla^2f-\frac{1}{n}\Delta f\,g+\frac{1}{n}\Upsilon\,g$, where $f$ is the unique solution of the equation
 \begin{equation}
  \label{eqn:Delta_f}
  \begin{cases}
   \Delta f = -\frac{1}{n-1}\left(\Upsilon-\overline{\Upsilon}\right), \\
   \displaystyle \int f \, \dvol_g = 0 ,
  \end{cases}
 \end{equation}
 and $\overline{\Upsilon}$ denotes the average of $\Upsilon$ with respect to $\dvol_g$.  Since $g$ is flat, we see that $E$ takes values in $\ker\delta_g$.  Indeed, $E$ gives an orthogonal decomposition
 \begin{equation}
  \label{eqn:decomp_ker_delta}
  \ker \delta_g \cong S^2_{TT}(M) \oplus C^\infty(M)
 \end{equation}
 with respect to the $L^2$-inner product, where $S_{TT}^2(M):=\ker\delta_g\cap\ker\tr_g$ and $C^\infty(M)$ denotes the image of $E$.  To see this, let $h_f:=E(\tr_g h)$ and note that $h_{TT}:=h-h_f$ is an element of $S_{TT}^2(M)$.  The conclusion that $S_{TT}^2(M)$ and $C^\infty(M)$ are orthogonal follows from integration by parts and the flatness of $g$.

 Taking a non-parallel $h_{TT} \in S_{TT}^2(M)$ and applying the infinitesimal rigidity of $L$ at flat metrics, we get
 \[ \int_M D^2L[h_{TT},h_{TT}]\,\dvol = A\int_M \lv\nabla^kh_{TT}\rv^2\,\dvol < 0, \]
 and hence $A < 0$.  Similarly, let $\Upsilon\in C^\infty(M)$ and set $h_f:=E(\Upsilon)$.  Using integration by parts and the definition~\eqref{eqn:Delta_f} of $f$, we compute that
 \begin{align*}
  \MoveEqLeft[4] \int_M D^2L[h_f,h_f]\,\dvol \\
  &= A\int_M \left\lv \nabla^{k+2} f - \frac{1}{n}\nabla^k\Delta f \cdot g + \frac{1}{n} \nabla^k\Upsilon g \right\rv^2\,\dvol + B \int_M \lv\nabla^k \Upsilon \rv^2\,\dvol  \notag \\
  &= \left( \frac{A}{n-1} + B \right) \int_M \lv\nabla^k \Upsilon \rv^2\,\dvol,  \notag
\end{align*}
where we used the fact that, at flat metrics,
 \[ \int_M \lv \nabla^{k+2} f \rv^2 \,\dvol_g = \int_M \lv\Delta \nabla^k f \rv^2\,\dvol_g = \frac{1}{(n-1)^2}\int_M \lv\nabla^k \Upsilon \rv^2\,\dvol_g . \]
 Since $L$ is infinitesimally rigid at flat metrics, we conclude that $\frac{1}{n-1}A + B < 0$.

 Combining the above observations, we compute that
 \begin{align*}
  \int_M D^2L[h,h]\,\dvol &= A\int_M \lv\nabla^kh_{TT}\rv^2\,\dvol + \left( \frac{A}{n-1} + B  \right)\int_M \lv\nabla^k \tr h\rv^2\,\dvol \\
  &\leq - C \left( \int_M \lv\nabla^kh_{TT}\rv^2\,\dvol + \frac{1}{n-1} \int_M \lv\nabla^k \tr h\rv^2\,\dvol \right) \notag \\
  &= - C \int_M \lv\nabla^kh\rv^2\,\dvol \notag,
 \end{align*}
 where $C:= \min \left\{ -A, - A - (n-1) B \right\} > 0$.
\end{proof}

The spectral gap of Lemma~\ref{lem:flat_spectrum} enables us to generalize the proofs of the rigidity results for the scalar curvature~\cite{FischerMarsden1975} and the $Q$-curvature~\cite{LinYuan2015a} to provide a proof of Theorem~\ref{thm:intro_rigidity}.

\begin{proof}[Proof of Theorem~\ref{thm:intro_rigidity}]
 Let $-2k$ be the weight of $L$.  From Ebin's slice theorem~\cite{Ebin1970}, there are constants $N,\varepsilon_1>0$ depending only on $(M,g)$ and $k$ such that for all metrics $\og$ with $\lV\og-g\rV_{C^{2k}}<\varepsilon_1$, there exists a diffeomorphism $\varphi\colon M\to M$ such that $h:=\varphi^\ast\og-g$ is divergence-free on $M$ and $\lV h\rV_{C^0}\leq N\lV \og-g\rV_{C^0} \leq N\lV \og-g\rV_{C^{2k}}$.

 Consider the functional
 \[ \mF(\og) = \int_M L_{\og}\, \dvol_g. \]
 Note that the volume element $\dvol_g$ is fixed independently of $\og$.  Suppose that $\og$ is a metric such that $L(\og)\geq0$ and there is a diffeomorphism $\varphi\colon M\to M$ such that $h:=\varphi^\ast\og-g$ is divergence-free. Considering the decomposition $h = h^{||} + h^\perp$, where $h^{||} \in \mP$ and $h^\perp \in \mP^\perp$, we can take $g + h^{||}$ to be the flat background metric we considered and hence we may further assume that $h \in \mP^\perp$.

 Using Weyl's invariant theory, we readily check that $\mF(g)=0$ and
 \begin{equation}
  \label{eqn:mF_1st_deriv}
  \left.\frac{d}{dt}\right|_{t=0}\mF(g+th) = \int_M DL_g [h]\, \dvol_g = 0 .
 \end{equation}

 Next, from Lemma~\ref{lem:flat_spectrum} we conclude that
 \begin{equation}
  \label{eqn:mF_2nd_deriv}
  \left.\frac{d^2}{dt^2}\right|_{t=0}\mF(g+th) = \int_M D^2L_g[h,h]\dvol_g  \leq -C\int_M\lv\nabla^k h\rv^2\,\dvol_g .
 \end{equation}

 Using Weyl's invariant theory and the fact that the linearization of the Levi-Civita connection (resp.\ Riemann curvature tensor) is a polynomial in $\nabla h$ (resp.\ $\nabla^2h)$, we estimate that
 \begin{equation}
  \label{eqn:mF_3rd_deriv}
  \frac{d^3}{dt^3} \mF(g_t) \leq C_1\sum_{j = 0}^k \int_M \lv h\rv \lv\nabla^j h \rv^2\,\dvol_{g} \leq C_1 \lV h\rV_{C^0} \sum_{j = 0}^k  \int_M \lv\nabla^j h \rv^2\,\dvol_{g}
 \end{equation}
 for some constant $C_1>0$ depending only on $(M^n,g)$ and $k$, where $g_t=g+th$.  To achieve this estimate, we use the fact that the third derivative $\mF^{(3)}(g_t)$ is the integral of a polynomial in covariant derivatives (with respect to $g_t$) of $h$ and the Riemann curvature tensor of $g_t$, taken with respect to $\dvol_g$; using integration by parts, this can be written so that one copy of $h$ is undifferentiated, and the assumption $\lV \og-g\rV_{C^{2k}}<\varepsilon_1$ can be used to estimate the result in terms of $g$.

 Before we give a further estimate of the tail term above, we briefly discuss a generalization of the usual Poincar\'e inequality to the space of tensors on a manifold: Assume $(M, g)$ is a Riemannian manifold and fix $1 \leq p \leq \infty$ and $r,s\in\bN\cup\{0\}$.  There is a positive constant $\alpha_1 = \alpha_1(p,r,s,g)$ such that for any $(r, s)$-tensor $T$ we have
 \begin{equation}
  \label{eqn:gen_Poincare}
  \lV T - T^{||} \rV_{L^p} \leq \alpha_1 \lV \nabla T \rV_{L^p},
 \end{equation}
 where $T^{||} \in \mP_{(r,s)}:= \left\{ T \in (\otimes^r T^*M) \otimes (\otimes^s TM) \suchthat \nabla T = 0 \right\}$ is the projection of $T$ to the corresponding space of parallel tensors. Applying~\eqref{eqn:gen_Poincare} repeatedly, we get a higher-order version:
 \begin{equation}
  \label{eqn:gen_Poincare_hi_order}
  \lV T - T^{||} \rV_{L^p} \leq \alpha_j \lV \nabla^j T \rV_{L^p},
 \end{equation}
 for $j \in \mathbb{N}$.  The proof of~\eqref{eqn:gen_Poincare} is an easy generalization of the analogous statement for functions (see, for example, \cite{Evans2010}), and will be omitted.

 Applying~\eqref{eqn:gen_Poincare_hi_order} to~\eqref{eqn:mF_3rd_deriv}, we have the estimate
 \begin{equation}
  \label{eqn:mF_3rd_deriv_final}
  \frac{d^3}{dt^3} \mF(g_t) \leq C_2 \lV h\rV_{C^0} \int_M \lv\nabla^k h \rv^2\,\dvol_{g}
 \end{equation}

 Combining~\eqref{eqn:mF_1st_deriv}, \eqref{eqn:mF_2nd_deriv}, and~\eqref{eqn:mF_3rd_deriv_final} yields
 \begin{equation}
  \label{eqn:taylor_consequence}
  0 \leq \mF(\varphi^\ast\og) \leq -\left(C - C_2\lV h\rV_{C^0(M,g)}\right)\int_M \lv\nabla^kh \rv^2\,\dvol_g .
 \end{equation}
 Set $\varepsilon=\min\{\varepsilon_1,C/NC_2\}$ and suppose that $\og$ satisfies $L(\og)\geq0$ and $\lV\og-g\rV_{C^{2k}}<\varepsilon$.  Let $\varphi$ and $h$ be determined by Ebin's slice theorem.  It follows from~\eqref{eqn:taylor_consequence} that $\nabla^kh =0$. Integration by parts yields $\int\lv\nabla^{k-1}h\rv^2=0$.  Thus $\nabla^{k-1}h=0$.  Continuing in this manner yields $\nabla h=0$; i.e.\ $h$ is parallel and hence $h = 0$.  Therefore $\varphi^\ast\og=g$, and hence $\og$, is flat.
\end{proof}

In practice, we expect that Theorem~\ref{thm:intro_rigidity} will be most useful when applied to CVIs.  In this case, the condition that a CVI $L$ of weight $-2k$ be infinitesimally rigid at flat metrics implies that it must equal a nonzero multiple of the $Q$-curvature of order $2k$, up to the addition of lower order CVIs in the metric.  This is an immediate consequence of the following lemma.

\begin{lem}
 \label{lem:rigid_implies_linear_term}
 Let $L$ be a CVI of negative weight $-2k$, $k\in\bN$, which is infinitesimally rigid at flat metrics.  Then there is a constant $c\not=0$ such that for any flat manifold $(M^n,g)$, $n\geq2k$, it holds that $S_g=0$ and
 \begin{equation}
  \label{eqn:rigid_implies_linear_term}
  DL_g[h] = c(-\Delta)^k\tr_g h
 \end{equation}
 for all $h\in\ker\delta_g$.
\end{lem}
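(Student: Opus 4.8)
The plan is to pin down $DL_g$ at a flat metric by invariant theory, deduce $S_g=0$ from it, and then extract $c\neq 0$ from infinitesimal rigidity via a second–variation computation. Throughout, $(M^n,g)$ denotes a closed flat manifold with $n\geq 2k$.

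\emph{Structure of $DL_g$ at a flat metric.} Since $L$ has negative weight, Weyl's invariant theory writes $L$ as a polynomial in the covariant derivatives of $\Rm$ every monomial of which carries at least one curvature factor; hence $L_g\equiv 0$, and so $dL_g=0$, on any flat manifold. Applying invariant theory to the metric linearization, together with the homogeneity $DL_{c^2g}[c^2h]=c^{-2k}DL_g[h]$, forces $DL_g[h]$ at a flat metric to be a constant–coefficient linear combination of the complete contractions of $\nabla^{2k}h$. Because covariant derivatives commute on a flat manifold and $h$ is symmetric, there are exactly two such contractions, $\Delta^k\tr_g h$ and $\Delta^{k-1}\nabla^a\nabla^b h_{ab}$, so $DL_g[h]=\alpha_1\Delta^k\tr_g h+\alpha_2\Delta^{k-1}\nabla^a\nabla^b h_{ab}$ for universal constants $\alpha_1,\alpha_2$. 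Taking $h=\mL_Xg$ and using Lemma~\ref{lem:dL_Lie_derivative} (so $DL_g[\mL_Xg]=dL_g(X)=0$) together with $\tr_g(\mL_Xg)=2\divsymb X$ and $\nabla^a\nabla^b(\mL_Xg)_{ab}=2\Delta\divsymb X$ forces $\alpha_1+\alpha_2=0$. Writing $\alpha:=\alpha_1$ and $c:=(-1)^k\alpha$, we obtain $DL_g[h]=\alpha\bigl(\Delta^k\tr_g h-\Delta^{k-1}\nabla^a\nabla^b h_{ab}\bigr)$, which reduces to $c(-\Delta)^k\tr_g h$ whenever $h\in\ker\delta_g$.

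\emph{Vanishing of $S_g$.} The right–hand side above is an iterated Laplacian (a divergence when $k=1$) applied to a function, so $\int_M DL_g[h]\,\dvol_g=0$ for every $h\in S^2T^\ast M$. Since $S=-\Gamma^\ast(1)$ and $\int_M\lp\Gamma^\ast(1),h\rp_g\,\dvol_g=\int_M DL_g[h]\,\dvol_g$, nondegeneracy of the $L^2$ pairing gives $S_g=0$.

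\emph{Nonvanishing of $c$.} It remains to show $\alpha\neq 0$; since $\alpha$ is a universal constant it suffices to test one closed flat manifold carrying a nonconstant function $\Upsilon$ (e.g.\ a flat torus). Let $f$ be the zero–mean solution of $\Delta f=\Upsilon-\oomega$ (where $\oomega$ denotes the average of $\Upsilon$) and set $h_0:=2\Upsilon g-2\nabla^2 f$; one checks directly that $h_0\in\ker\delta_g$ and that $h_0$ is not parallel, so infinitesimal rigidity gives $\int_M D^2L_g[h_0,h_0]\,\dvol_g<0$. We now compute this integral a second time using $2\Upsilon g=2\nabla^2 f+h_0$ and $2\nabla^2 f=\mL_{\nabla f}g$: bilinearity reduces it to the three pieces $\int_M D^2L_g[2\Upsilon g,2\Upsilon g]$, $\int_M D^2L_g[\mL_{\nabla f}g,h_0]$, and $\int_M D^2L_g[\mL_{\nabla f}g,\mL_{\nabla f}g]$, each with respect to $\dvol_g$. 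The last vanishes, since pulling $g$ back along the flow of $\nabla f$ produces a curve of flat metrics on which $L\equiv 0$, and the acceleration of this curve contributes only a divergence integral. For the middle piece, differentiating the identity $DL_g[\mL_Xg]=dL_g(X)$ of Lemma~\ref{lem:dL_Lie_derivative} in the direction $h_0$ gives $D^2L_g[h_0,\mL_Xg]+DL_g[\mL_Xh_0]=X\bigl(DL_g[h_0]\bigr)$; taking $X=\nabla f$, inserting $DL_g[h_0]=\alpha\Delta^k\tr_g h_0$, integrating (so the term $DL_g[\mL_Xh_0]$ drops), and integrating by parts against $\Delta f$ evaluates it as $2(n-1)(-1)^{k+1}\alpha\int_M\lv\nabla^k\Upsilon\rv^2\,\dvol_g$. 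For the first piece, one has $\int_M D^2L_g[2\Upsilon g,2\Upsilon g]\,\dvol_g=\frac{d^2}{dt^2}\big|_{t=0}\int_M L_{e^{2t\Upsilon}g}\,\dvol_g$ (again accelerations contribute only divergence integrals), and differentiating the conformal linearization $DL(\Upsilon)=-2kL\Upsilon-\delta(T(d\Upsilon))$ along $e^{2t\Upsilon}g$ — using $L_g=0$ and $\delta_g(T_g(d\Upsilon))=-DL_g(\Upsilon)=-2(n-1)\alpha\Delta^k\Upsilon$, while tracking the variations of $\delta$, of $T$, and of the volume element — evaluates it as $2(n-1)(n+2k)(-1)^{k+1}\alpha\int_M\lv\nabla^k\Upsilon\rv^2\,\dvol_g$. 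Combining the three,
\[ \int_M D^2L_g[h_0,h_0]\,\dvol_g=2(n-1)(n+2k-2)\,(-1)^{k+1}\,\alpha\int_M\lv\nabla^k\Upsilon\rv^2\,\dvol_g . \]
Since $n\geq 2k\geq 2$ we have $(n-1)(n+2k-2)>0$ and $\int_M\lv\nabla^k\Upsilon\rv^2>0$, so $\int_M D^2L_g[h_0,h_0]\,\dvol_g<0$ forces $(-1)^{k+1}\alpha<0$; in particular $\alpha\neq 0$ (indeed $c=(-1)^k\alpha>0$), which completes the proof.

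\emph{Main obstacle.} The first two paragraphs are routine invariant theory and a nondegeneracy argument; the substance is all in the last paragraph, and the most delicate step is the flat–metric conformal second variation $\frac{d^2}{dt^2}\big|_{t=0}\int_M L_{e^{2t\Upsilon}g}\,\dvol_g$, where one must carefully account for how $\delta$, $T$, and $\dvol$ each vary along the conformal curve while the background (flat) volume element is held fixed.
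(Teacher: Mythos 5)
Your proof is correct, but the step that makes the lemma nontrivial --- showing $c\neq 0$ --- is argued quite differently from the paper. The paper first uses invariant theory on $L$ itself: the part of $L$ linear in curvature must be $c(-\Delta)^{k-1}R$, so $DL_g[h]=c(-\Delta)^{k-1}\bigl(\delta^2h-\Delta\tr h\bigr)$ at flat metrics, and then argues by contradiction: if $c=0$ then $L$ has no linear-in-curvature terms, so $T\equiv 0$ at flat metrics, so the second \emph{conformal} variation is a pure divergence and integrates to zero; transporting this to the divergence-free direction $E(\Upsilon)$ via the identity $D\Rm[E(\Upsilon)]=\tfrac{n}{2(n-1)}D\Rm(\Upsilon)$ contradicts the equality case of infinitesimal rigidity. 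You instead pin down $DL_g$ at flat metrics by applying invariant theory to the linearization (the two contractions $\Delta^k\tr h$ and $\Delta^{k-1}\nabla^a\nabla^bh_{ab}$, with $\alpha_1+\alpha_2=0$ forced by Lemma~\ref{lem:dL_Lie_derivative} and $L\equiv 0$), and then compute $\int_M D^2L[h_0,h_0]\,\dvol$ explicitly for the divergence-free conformal-type perturbation $h_0=2\Upsilon g-\mL_{\nabla f}g$, splitting via bilinearity into the conformal piece, the cross term (differentiating the naturality identity $DL_{\bar g}[\mL_X\bar g]=dL_{\bar g}(X)$), and the Lie-derivative piece (zero since the pullback curve stays flat). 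I checked your intermediate values: the cross term is $2(n-1)(-1)^{k+1}\alpha\int\lv\nabla^k\Upsilon\rv^2$, and the conformal piece is $-(n+2k)\int\Upsilon\,DL(\Upsilon)\,\dvol=2(n-1)(n+2k)(-1)^{k+1}\alpha\int\lv\nabla^k\Upsilon\rv^2$; your tersely justified ``track the variations of $\delta$, $T$, $\dvol$'' step does work because the $\dot T$ contribution enters only as an exact $g$-divergence integrated against the fixed flat volume element (equivalently, one can get this piece more cleanly by differentiating the self-adjointness identity $\int DL_{g_t}(\Upsilon)\,\dvol_{g_t}=-2k\int\Upsilon L_{g_t}\dvol_{g_t}$ at a metric with $L_g=0$), and your total $2(n-1)(n+2k-2)(-1)^{k+1}\alpha\int\lv\nabla^k\Upsilon\rv^2$ shows the factor of $2$ on the cross term was accounted for. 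What the two routes buy: the paper's argument is soft and purely structural, needing only the qualitative statement that $c=0$ kills $T$ at flat metrics; yours is quantitative --- it is essentially the conformal-direction case of the spectral gap in Lemma~\ref{lem:flat_spectrum} made explicit --- and as a bonus it determines the sign $c=(-1)^k\alpha>0$, consistent with $L$ agreeing with a positive multiple of $Q_{2k}$ to leading order.
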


\begin{remark}
 \label{rk:rigid_requires_cvi}
 The assumption that $L$ is a CVI is necessary.  Indeed, $-R^2$ is a natural Riemannian scalar invariant which is homogeneous and satisfies~\eqref{eqn:infinitesimally_rigid} at flat metrics.  However, $-R^2$ is not conformally variational, and clearly $D(-R^2)_g\equiv0$ at flat metrics.
\end{remark}

\begin{proof}
 Since $n\geq2k>0$, Weyl's invariant theory implies that $L$ is a linear combination of scalars of the form~\eqref{eqn:invariant_thy_basis}.  This has two consequences for us: First, $L\equiv0$ if $g$ is flat.  Second, there is a constant $c\in\bR$ such that $L\equiv c(-\Delta)^{k-1}R$ modulo terms at least quadratic in $\Rm$.  In particular, if $g$ is flat, then
 \[ DL[h]=c(-\Delta)^{k-1}DR[h] = c(-\Delta)^{k-1}\left(\delta^2h-\Delta\tr h\right) \]
 for any $h\in T_g\kM$.  Integrating this against $\dvol$ yields $S=0$, while restricting to divergence-free $h$ yields~\eqref{eqn:rigid_implies_linear_term}.

 Suppose that $c=0$.  Then $DL$ vanishes identically and $L$ is a linear combination of scalars of the form
 \[ \mathrm{contr}\left(\nabla^{2j}\Rm \otimes \nabla^{2k-2j-4}\Rm\right) , \]
 $0\leq j\leq k-2$, modulo terms at least third-order in $\Rm$.  Since $g$ is flat, we have that (see~\cite{Besse})
 \begin{equation}
  \label{eqn:linearize_Rm_flat}
  DR_{ijk}{}^l[h] = -\frac{1}{2}\left(\nabla_i\nabla_kh_j{}^l + \nabla_j\nabla^lh_{ik} - \nabla_j\nabla_kh_i{}^l - \nabla_i\nabla^lh_{jk}\right)
 \end{equation}
 at $g$.  Let $\Upsilon\in C^\infty(M)$ and set $h=E(\Upsilon)$ for $E$ as in~\eqref{eqn:decomp_ker_delta}.  From~\eqref{eqn:linearize_Rm_flat} we readily conclude that
 \[ D\Rm[h] = \frac{n}{2(n-1)}D\Rm(\Upsilon) . \]
 Recalling that $L$ has no terms which are linear in $\Rm$, we conclude that
 \[ D^2L[h,h] = \left(\frac{n}{2(n-1)}\right)^2D^2L(\Upsilon,\Upsilon) . \]
 In particular, since $L$ is infinitesimally rigid at flat metrics, we have that
 \begin{equation}
  \label{eqn:inf_rigid_contradiction}
  \int_M D^2L(\Upsilon,\Upsilon)\,\dvol \leq 0
 \end{equation}
 with equality if and only if $\Upsilon$ is constant.

 We now derive a contradiction to~\eqref{eqn:inf_rigid_contradiction}.  Recall from~\eqref{eqn:cvi_linearization} that
 \[ DL(\Upsilon) = -2kL\Upsilon - \delta\left(T(d\Upsilon)\right) . \]
 The fact that $L$ contains no terms which are linear in $\Rm$ implies that $T\equiv0$ at flat metrics.  It thus follows $D^2L\colon S^2T_g\kC\to C^\infty(M)$ is a divergence if $g$ is flat.  In particular,
 \[ \int_M D^2L(\Upsilon,\Upsilon)\,\dvol_g = 0 \]
 for all $\Upsilon\in C^\infty(M)$.  This contradicts the characterization of equality in~\eqref{eqn:inf_rigid_contradiction}.  Hence $c\not=0$, as desired.
\end{proof}

The results of this section illustrate the importance of the second metric variation $D^2L\in S^2T^\ast\kM$ of a CVI $L$.  While it is straightforward to compute $D^2L$ from $L$, in practice one only needs an integral formula for $D^2L$ at an $L$-critical metric.  The following observation should prove useful in deriving such formulae in general.

\begin{lem}
 \label{lem:D2L-for-L-singular}
 Let $L$ be a CVI and let $(M^n,g)$ be an $L$-singular manifold with $f$ a nontrivial element of $\ker\Gamma^\ast$.  Then
 \[ \int_M f\,D^2L[h,h]\,\dvol = \int_M \lp D\left(\Gamma^\ast(f)\right)[h],h\rp\,\dvol - \frac{1}{2}\int_M\lp \Gamma^\ast(f\tr h),h\rp\,\dvol \]
 for all $h\in T_g\kM\cong S^2T^\ast M$.
\end{lem}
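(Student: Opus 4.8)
The plan is to differentiate, along the linear path $g_t:=g+th$, the identity that defines $\Gamma^\ast$, and then to use the hypothesis $f\in\ker\Gamma^\ast$ to discard the terms that would otherwise appear.

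First I would record the first variations that enter. Writing $g_t=g+th$, one has $\frac{d}{dt}\big|_{t=0}\dvol_{g_t}=\frac12(\tr_gh)\,\dvol_g$; by definition of the second metric variation, $\frac{d}{dt}\big|_{t=0}\bigl(DL_{g_t}[h]\bigr)=D^2L_g[h,h]$; and $\frac{d}{dt}\big|_{t=0}\bigl(\Gamma_{g_t}^\ast(f)\bigr)=D\bigl(\Gamma^\ast(f)\bigr)[h]$, the last of these being by definition the metric linearization of the metric-dependent tensor $g\mapsto\Gamma_g^\ast(f)$, with $f$ held fixed.

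Next I would differentiate at $t=0$ the defining relation for $\Gamma^\ast$ applied at the metric $g_t$ (with $h$ taken to have compact support when $M$ is not closed, so that the integrations by parts implicit in that relation are valid), namely
\[ \int_M \lp \Gamma_{g_t}^\ast(f),h\rp_{g_t}\,\dvol_{g_t} = \int_M f\,DL_{g_t}[h]\,\dvol_{g_t} . \]
On the right-hand side, the product rule together with the variations above yields
\[ \int_M f\,D^2L_g[h,h]\,\dvol_g + \frac12\int_M (f\,\tr_gh)\,DL_g[h]\,\dvol_g , \]
and applying the defining relation for $\Gamma^\ast$ once more --- now at $g$ and with the function $f\,\tr_gh$ in place of $f$ --- rewrites the second integral as $\frac12\int_M \lp \Gamma^\ast(f\,\tr h),h\rp\,\dvol$. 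On the left-hand side, since $f\in\ker\Gamma^\ast$ we have $\Gamma_g^\ast(f)=0$, so the two contributions in which $\frac{d}{dt}$ lands on the metric inside $\lp\cdot,\cdot\rp_{g_t}$ or on $\dvol_{g_t}$ each carry the factor $\Gamma_g^\ast(f)=0$ and therefore vanish, leaving only $\int_M \lp D(\Gamma^\ast(f))[h],h\rp\,\dvol$. Equating the two derivatives and rearranging gives the asserted identity.

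The computation is essentially bookkeeping; the one point to be careful about is that it is precisely the $L$-singularity hypothesis $\Gamma_g^\ast(f)=0$ that removes the inner-product and volume-element variations on the left-hand side --- without it, those terms would survive and the formula would pick up extra lower-order contributions. It is also worth fixing once and for all the convention, used above, that $D(\Gamma^\ast(f))[h]$ denotes the linearization of $g\mapsto\Gamma_g^\ast(f)$ in the direction $h$ with $f$ unvaried, since $\Gamma^\ast$ depends on $g$ both through $\Gamma$ and through the metric used to form the $L^2$-adjoint.
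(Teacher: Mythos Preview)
Your argument is correct and is essentially the same as the paper's: both differentiate the defining identity $\int_M\lp\Gamma_{g_t}^\ast(f),h\rp_{g_t}\,\dvol_{g_t}=\int_M f\,DL_{g_t}[h]\,\dvol_{g_t}$ along $g_t=g+th$, rewrite the volume-element contribution on the right via the adjoint relation as $\frac12\int_M\lp\Gamma^\ast(f\tr h),h\rp\,\dvol$, and use $\Gamma^\ast(f)=0$ to kill the inner-product and volume-element variations on the left. The only cosmetic difference is that the paper first introduces the auxiliary functional $\mF(\og)=\int_M fL(\og)\,\dvol_g$ to record that $g$ is a critical point (so that $\int_M f\,D^2L[h,h]\,\dvol_g$ is the Hessian of $\mF$), and it writes out explicitly the two vanishing terms $\frac12\int_M\lp\Gamma^\ast(f),(\tr h)h\rp\,\dvol$ and $-2\int_M\lp\Gamma^\ast(f),h\circ h\rp\,\dvol$ before discarding them.
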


\begin{proof}
 Define $\mF\colon\kM\to\bR$ by
 \[ \mF(\og) = \int_M fL(\og)\,\dvol_g . \]
 Since $\Gamma^\ast(f)=0$, we see that
 \[ \left.\frac{d}{dt}\right|_{t=0}\mF(g+th) = \int_M f\,DL[h]\,\dvol_g = \int_M \lp \Gamma^\ast(f),h\rp\,\dvol_g = 0 \]
 for all $h\in T_g\kM$.  In particular, $g$ is a critical point of $\mF$.  It follows that
 \begin{equation}
  \label{eqn:D2mF}
  \left.\frac{d^2}{dt^2}\right|_{t=0}\mF(g+th) = \int_M f\,D^2L[h,h]\,\dvol_g .
 \end{equation}

 We now compute the second derivative of $\mF$ in another way.  Let $t\mapsto g_t$ be a smooth curve with $g_0=g$.  Differentiating the identity
 \[ \int_M \lp \Gamma_{g_t}^\ast(f), h\rp_{g_t} \,\dvol_{g_t} = \int_M f\,DL_{g_t}[h]\,\dvol_{g_t} \]
 at $t=0$ yields
 \begin{multline*}
  \int_M f\,D^2L[h,h]\,\dvol = \int_M \lp D\left(\Gamma^\ast(f)\right)[h],h\rp\,\dvol - \frac{1}{2}\int_M\lp \Gamma^\ast(f\tr h),h\rp\,\dvol \\ + \frac{1}{2}\int_M\lp \Gamma^\ast(f), (\tr h)h\rp\,\dvol - 2\int_M \lp \Gamma^\ast(f), h\circ h\rp\,\dvol ,
 \end{multline*}
 where $h\circ h$ is the composition of $h$ with itself, regarded as a section of $\End(TM)$ via $g$, and all quantities, including the Riemannian volume forms, are computed at $g$.  Recalling that $\Gamma^\ast(f)=0$ and inserting the above into~\eqref{eqn:D2mF} yields the desired result.
\end{proof}

\section{Examples of CVIs}
\label{sec:weight}

We conclude by discussing examples of CVIs through two families.  First, in Subsection~\ref{subsec:std} we describe Branson's $Q$-curvatures~\cite{Branson1995,FeffermanGraham2012} and the renormalized volume coefficients~\cite{Graham2000} from the point of view of CVIs, noting in particular that they provide two families of CVIs of weight $-2k$, $k\in\bN$, which are positive and variationally stable at any Einstein metric with positive scalar curvature.  Second, we give bases for the spaces of CVIs of weight $-2k$, $k\in\{1,2,3\}$, on Riemannian manifolds of dimension $n\geq 2k$, and illustrate the relative ease with which one can check which such CVIs are variationally stable at any Einstein metric with positive scalar curvature.

We express our bases for the spaces of CVIs of weight at least $-6$ in terms of covariant derivatives of the tensors
\begin{align*}
 J & = \frac{1}{2(n-1)}R, \\
 P_{ij} & = \frac{1}{n-2}\left(R_{ij} - Jg_{ij}\right), \\
 C_{ijk} & = \nabla_iP_{jk} - \nabla_jP_{ik}, \\
 W_{ijkl} & = R_{ijkl} - P_{ik}g_{jl} - P_{jl}g_{ik} + P_{il}g_{jk} + P_{jk}g_{il}, \\
 B_{ij} & = \nabla^k C_{kij} + W_{isjt}P^{st}
\end{align*}
on a Riemannian $n$-manifold, where $R_{ijkl}$ is the Riemannian curvature tensor
\[ \nabla_i\nabla_j X^k - \nabla_j\nabla_i X^k = R_{ij}{}^k{}_s X^s . \]
Note that $P_{ij}$ is the \emph{Schouten tensor}, $W_{ijkl}$ is the \emph{Weyl tensor}, $C_{ijk}$ is the \emph{Cotton tensor}, and $B_{ij}$ is the \emph{Bach tensor}.  Our derivation requires the divergence formulae
\begin{align*}
 \nabla^kW_{ijkl} & = (n-3)C_{ijl}, \\
 \nabla^kB_{jk} & = -(n-4)C_{sjt}P^{st} .
\end{align*}
We also require the conformal linearizations of these tensors.  Recall the well-known variational formulae (e.g.\ \cite[Chapter~1]{Besse})
\begin{align*}
 DP_{ij}(\Upsilon) & = -\Upsilon_{ij}, \\
 D\nabla_i\omega_j(\Upsilon) & = -\Upsilon_i\omega_j - \omega_i\Upsilon_j + \Upsilon^k\omega_k g_{ij}
\end{align*}
for all one-forms $\omega_j$.  From these formulae it is straightforward to compute that
\begin{align*}
 DC_{ijk}(\Upsilon) & = W_{ij}{}^s{}_k\Upsilon_s, \\
 DB_{ij}(\Upsilon) & = -2\Upsilon B_{ij} - (n-4)\Upsilon^s\left(C_{isj}+C_{jsi}\right) .
\end{align*}

\subsection{$Q$-curvatures and renormalized volume coefficients}
\label{subsec:std}

Given $k\in\bN$, the $k$-th renormalized volume coefficients $v_k$ and the $Q$-curvatures $Q_{2k}$ of order $2k$ provide important examples of CVIs defined on any Riemannian manifold $(M^n,g)$ of dimension $n\geq 2k$ which are elliptic and variationally stable at any Einstein metric with positive scalar curvature.  One way to derive these properties is through their definitions in terms of Poincar\'e metrics, as we briefly summarize below.

A \emph{Poincar\'e manifold} is a triple $(X^{n+1},M,g_+)$ consisting of a complete Riemannian manifold $(X^{n+1},g_+)$ such that $X$ is (diffeomorphic to) the interior of a compact manifold $\oX$ with boundary $M=\partial\oX$ and for which there is a defining function $r\colon\oX\to[0,\infty)$ such that
\begin{enumerate}
 \item $M=r^{-1}(0)$ and $dr$ is nowhere vanishing along $M$,
 \item $r^2g_+$ extends to a $C^{n-1,\alpha}$ metric on $\oX$, and
 \item the metric $g_+$ is asymptotically Einstein:
 \begin{align*}
  \Ric(g_+) + ng_+ & = O(r^{n-2}), \\
  \tr_h r^{2-n}i^\ast\left(\Ric(g_+)+ng_+\right) & = 0 .
 \end{align*}
\end{enumerate}
where $i\colon M\to\oX$ is the inclusion mapping.  If $r$ is a defining function as above and $\Upsilon\in C^\infty(\oX)$, then $e^\Upsilon r$ is also a defining function.  Thus only the \emph{conformal infinity} $(M,[i^\ast(r^2g_+)])$ is determined by a Poincar\'e manifold.  Conversely, Fefferman and Graham showed~\cite{FeffermanGraham2012} that given any conformal manifold $(M^n,[h])$, there exists a Poincar\'e manifold with that conformal infinity.  Indeed, they showed that the Poincar\'e metric can be put into a canonical local form near the boundary, in the sense that for any representative $h\in[h]$, there is a \emph{geodesic defining function} $r$, asymptotically unique near $M$, such that
\begin{equation}
 \label{eqn:normal_form}
 g_+ = r^{-2}\left(dr^2 + h_r\right)
\end{equation}
in a neighborhood $[0,\varepsilon)\times M$ of $M$, where $h_r$ is a one-parameter family of metrics on $M\cong\{r\}\times M$ such that $h_0=h$.  Moreover, the metric $g_+$ is uniquely determined to order $O(r^n)$ by $h$; indeed,
\begin{equation}
 \label{eqn:normal_form_asymptotics}
 h_r = \begin{cases}
        h + r^2h_{(2)} + \dotsb + r^{n-1}h_{(n-1)} + Hr^n + o(r^n), & \text{$n$ odd}, \\
        h + r^2h_{(2)} + \dotsb + r^{n-2}h_{(n-2)} + h_{(n)}r^n\log r + Hr^n + o(r^n), & \text{$n$ even},
 \end{cases}
\end{equation}
with each of the terms $h_{(j)}$, $j\leq n$, a natural Riemannian tensor determined only by $h$ and vanishing if $j$ is odd.  Additionally, the term $h_{(n)}$ is trace-free.  The term $H$ is not locally determined, but its trace $\tr_h H$ is a natural Riemannian invariant determined only by $h$.  Moreover, if $n$ is odd, then $\tr_hH=0$.

For the remainder of this subsection, we let $(M^n,h)$ be a Riemannian manifold and associate to it a Poincar\'e manifold $(X^{n+1},M^n,g_+)$ and a geodesic defining function $r$.

To define the renormalized volume coefficients, let $\dvol_{g_+}$ be the Riemannian volume element of $g_+$.  It follows from~\eqref{eqn:normal_form_asymptotics} that near $M$, the asymptotic expansion
\[ \left(\frac{\det h_r}{\det h}\right)^{1/2} = \sum_{k=0}^{\lfloor n/2\rfloor} (-2)^{-k}v_{k}r^{2k} + O(r^{n+1}) \]
contains only even terms $v_k$ until order $o(r^{n+1})$.  Moreover, the terms $v_k$, $k\leq n/2$, are complete contractions of polynomials in $h,h_{(j)},\tr_hH$, $2\leq j<n$ even, and hence are natural Riemannian invariants.  For each $k\in\bN$ with $k\leq n/2$, we call $v_k$ the \emph{$k$-th renormalized volume coefficient}.  Clearly $v_0=1$, and our normalization is such that $v_1=\sigma_1=J$ and $v_2=\sigma_2=\frac{1}{2}(J^2-\lv P\rv^2)$; see~\cite{Graham2009}.  Graham showed~\cite[Theorem~1.5]{Graham2009} (see also~\cite{ISTY2000}) that the renormalized volume coefficients are CVIs for all $k\leq n/2$ by directly computing the conformal linearization of $v_k$.  In particular, using the fact that if $(M^n,h)$ is Einstein, then the normal form~\eqref{eqn:normal_form} of the Poincar\'e metric $g_+$ is
\begin{equation}
 \label{eqn:einstein_normal_form}
 g_+ = r^{-2}\left(dr^2 + h - r^2P + \frac{1}{4}P^2 r^4\right),
\end{equation}
one can conclude that
\begin{equation}
 \label{eqn:conformal_linearization_vk_einstein}
 Dv_k(\Upsilon) = -2kv_k\Upsilon - \delta\left(T_{k-1}(\nabla\Upsilon)\right)
\end{equation}
at any Einstein metric $(M^n,h)$, where $T_{k-1}$ is the $(k-1)$-th Newton tensor; see~\cite{Viaclovsky2000}.  In this way, Chang, Fang and Graham~\cite{ChangFangGraham2012} showed that the renormalized volume coefficients are elliptic and variationally stable at any Einstein manifold with positive scalar curvature.

To define the $Q$-curvatures, we first recall the construction of the GJMS operators~\cite{GJMS1992} via Poincar\'e metrics.  Given $f\in C^\infty(M)$, Graham and Zworski showed~\cite{GrahamZworski2003} that there is a function $u\in C^\infty(X)$ which is a solution to the equation
\begin{equation}
 \label{eqn:poisson}
 -\Delta_{g_+}u-\left(\frac{n^2}{4}-k^2\right)u = 0
\end{equation}
of the form $u=Fr^{\frac{n-2k}{2}}+Gr^{\frac{n+2k}{2}}\log r$ with $F,G\in C^\infty(\oX)$ and $F\rv_M=1$.  Moreover, $F\mod O(r^{2k})$ and $G\rv_M$ are determined by $h$ and $f$.  In particular, one may define the operator $P_{2k}$ by
\begin{equation}
 \label{eqn:defn_gjms}
 P_{2k}f := c_kG\rv_M , \qquad c_k = (-1)^{k-1}2^{2k-1}k!(k-1)! .
\end{equation}
With this normalization, $P_{2k}$ is a formally self-adjoint conformally covariant operator with leading order term $(-\Delta)^k$.  Indeed, $P_{2k}$ is the GJMS operator~\cite{GJMS1992} of order $2k$.  Note that $u=1$ is clearly a solution to~\eqref{eqn:poisson} when $k=n/2$, and hence $P_n(1)=0$ whenever $n$ is even.

When $k<n/2$, the \emph{$Q$-curvature of order $2k$}, is $Q_{2k}:=\frac{2}{n-2k}P_{2k}(1)$.  Thus $Q_{2k}$ is a natural Riemannian scalar invariant.  The conformal covariance of $P_{2k}$ implies the conformal transformation law
\begin{equation}
 \label{eqn:conf_noncritQ}
 e^{2k\Upsilon}\hQ_{2k} = \frac{2}{n-2k}e^{-\frac{n-2k}{2}\Upsilon}P_{2k}\left(e^{\frac{n-2k}{2}\Upsilon}\right)
\end{equation}
for $\hg=e^{2\Upsilon}g$.  When $n$ is even, Fefferman and Graham~\cite{FeffermanGraham2002} showed that there is a formal solution $u\in C^\infty(X)$ to the equation
\[ -\Delta_{g_+} u = n \]
of the form $u=\log r + A + Br^n\log r$ with $A,B\in C^\infty(\oX)$ and $A\rv_M=0$.  Moreover, $A\mod O(r^n)$ and $B\rv_M$ are determined by $h$.  The \emph{critical $Q$-curvature} is
\[ Q_n := c_{n/2}B\rv_M \]
for $c_{n/2}$ as in~\eqref{eqn:defn_gjms}.  It is apparent from the construction that $Q_n$ is a natural Riemannian scalar invariant.  Comparison with the definition~\eqref{eqn:defn_gjms} of $P_n$ leads to the conformal transformation law
\begin{equation}
 \label{eqn:conf_critQ}
 e^{n\Upsilon}\hQ_n = Q_n + P_n(\Upsilon)
\end{equation}
for $\hg=e^{2\Upsilon}g$.

Combining~\eqref{eqn:conf_noncritQ} and~\eqref{eqn:conf_critQ} yields the conformal linearization
\begin{equation}
 \label{eqn:Q_cvi}
 DQ_{2k}(\Upsilon) = -2kQ_{2k}\Upsilon + \left(P_{2k}\right)_0(\Upsilon)
\end{equation}
for all $k\leq n/2$, where $\left(P_{2k}\right)_0$ is the GJMS operator $P_{2k}$ with the constant term removed; i.e.\ $\left(P_{2k}\right)_0(\Upsilon):=P_{2k}(\Upsilon)-\Upsilon P_{2k}(1)$.  This recovers the well-known fact that the $Q$-curvatures are elliptic CVIs.  Moreover, if $(M^n,h)$ is an Einstein metric, then the GJMS operators factor~\cite{FeffermanGraham2012,Gover2006q} as products of Schr\"odinger operators,
\begin{equation}
 \label{eqn:GJMS_factor}
 P_{2k} = \prod_{j=1}^k \left(-\Delta + \frac{(n+2j-2)(n-2j)}{2n}J\right) .
\end{equation}
Combining~\eqref{eqn:Q_cvi} and~\eqref{eqn:GJMS_factor}, one sees that the $Q$-curvatures are positive and variationally stable at Einstein metrics with positive scalar curvature.  In fact, it is straightforward to check that at such a metric, $DQ_{2k}$ can be written as a composition $(-\Delta-2J)\circ F$ for $F$ a positive operator which commutes with $(-\Delta-2J)$.

\subsection{CVIs of weight $-2$}
\label{subsec:weight2}

Up to scaling, Weyl's theorem implies that $J$ is the only natural Riemannian invariant of weight $-2$.  Recall from~\eqref{eqn:conformal_linearization_R} that
\[ DJ(\Upsilon) = -2J\Upsilon - \Delta\Upsilon . \]
Thus $J$ is elliptic.  From the Lichernowicz--Obata Theorem, we also see that $J$ is variationally stable at any Einstein metric.

\subsection{CVIs of weight $-4$}
\label{subsec:weight4}

An important step in the computation of functional determinants of powers of conformally covariant operators by Branson and {\O}rsted~\cite{BransonOrsted1991b} is the identification of the space of CVIs of weight $-4$.  This identification is readily made as follows: It is clear that the set
\[ \{ -\Delta J, J^2, \lv P\rv^2, \lv W\rv^2 \} \]
forms a basis for the space of Riemannian invariants of weight $-4$.  The conformal invariance of the Weyl curvature implies that $\lv W\rv^2$ is a conformal invariant of weight $-4$, while direct computations yield
\begin{align*}
 \frac{1}{2}D\Bigl(\int_M J^2\,\dvol\Bigr)(\Upsilon) & = \int_M \left(-\Delta J + \frac{n-4}{2}J^2\right)\Upsilon\,\dvol, \\
 \frac{1}{2}D\Bigl(\int_M \lv P\rv^2\,\dvol\Bigr)(\Upsilon) & = \int_M \left( -\Delta J + \frac{n-4}{2}\lv P\rv^2\right)\Upsilon\,\dvol .
\end{align*}
In particular, we recover the facts that
\begin{align*}
 \sigma_2 & := \frac{1}{2}\left(J^2-\lv P\rv^2\right), \\
 Q_4 & := -\Delta J - 2\lv P\rv^2 + \frac{n}{2}J^2
\end{align*}
are conformally variational and the observation of Branson and {\O}rsted that
\[ \left\{ \sigma_2, Q_4, \lv W\rv^2 \right\} \]
is a basis for the space of CVIs of weight $-4$.

We now consider variational stability of CVIs of weight $-4$.  To that end, we restrict our attention to the span of $\sigma_2$ and $Q_4$.  Indeed, $\lv W\rv^2$ need not be constant at all Einstein metrics.  For example, consider the Page metric~\cite{Page1979} on $\bCP^2\hash\overline{\bCP^2}$, written in coordinates as
\[ g = \alpha^2\,dr^2 + \beta^2\left(d\tau-4\sin^2\bigl(\frac{\rho}{2}\bigr)d\theta^2\right)^2 + \gamma^2\left(d\rho^2+\sin^2\rho\,d\theta^2\right), \]
where $\tau,\rho,\theta\in(0,2\pi)$ and $r\in(0,\nu)$ for $\nu\in(0,1)$ the root of $\nu^4+4\nu^3-6\nu^2+12\nu-3$, and where $\alpha,\beta,\gamma$ are the functions of $r$ satisfying $\alpha\beta=c:=(3+6\nu^2-\nu^4)^{-1}$, $\gamma^2=c(1-r^2)$, and
\[ \beta^2 = c^2(1-r^2)^{-1}\left(\nu^2-r^2\right)\left(3-\nu^2-(1+\nu^2)r^2\right) . \]
It is readily computed that $\Ric(g)=3(1+\nu^2)g$.  With respect to the orthonormal frame $\he_0=\alpha\,dr$, $\he_1=\beta\,\bigl(d\tau-4\sin^2(\frac{\rho}{2})\bigr)$, $\he_i=\gamma e_i$, $i\in\{2,3\}$, where $e_2,e_3$ is a local orthonormal frame for the round two-sphere $(S^2,d\rho^2+\sin^2\rho\,d\theta^2)$, a straightforward but tedious computation yields
\begin{equation}
 \label{eqn:page_weyl}
 \begin{split}
  W_{0101} & = \gamma^{-4}\left(\gamma^2-(1+\nu)^2\gamma^4 - (3+r^2)\beta^2\right), \\
  W_{0123} & = 2\gamma^4\left(c^{-1}\gamma^2\beta\beta^\prime + r\beta^2\right),
 \end{split}
\end{equation}
as well as the identities $W_{0202}=W_{0303}=W_{1212}=W_{1313}$ and $W_{0231}=W_{0312}$.  All remaining nonvanishing components of the Weyl tensor can be determined from these facts via the usual symmetries of the Weyl tensor.  It follows that, by taking isometric products with suitably normalized spheres, one obtains examples of Einstein metrics of all dimensions greater than or equal to $4$ for which $\lv W\rv^2$ is nonconstant; it is an $\bR$-linear combination of a constant and terms quadratic in~\eqref{eqn:page_weyl}.

As discussed in Subsection~\ref{subsec:std}, both $\sigma_2$ and $Q_4$ are variationally stable at any Einstein metric with positive scalar curvature.  To check which elements of $\lp \sigma_2, Q_4\rp$ also satisfy this property, we use~\eqref{eqn:conformal_linearization_vk_einstein}, \eqref{eqn:Q_cvi}, and~\eqref{eqn:GJMS_factor} to compute that
\begin{align*}
 D\sigma_2(\Upsilon) & = \frac{n-1}{n}J\bigl(-\Delta-2J\bigr)\Upsilon , \\
 DQ_4(\Upsilon) & = \bigl(-\Delta+\frac{n^2-4}{n}J\bigr)\bigl(-\Delta-2J\bigr)\Upsilon
\end{align*}
for any Einstein manifold $(M^n,g)$.  Consider the three cones
\begin{align*}
 \mE & := \left\{ \alpha Q_4 + \beta\sigma_2 \suchthat \alpha>0\right\} \cup \left\{ \beta\sigma_2 \suchthat \beta>0 \right\}, \\
 \mV & := \left\{ \alpha Q_4 + \beta\sigma_2 \suchthat \alpha\geq 0, (n^2+2n-4)\alpha+(n-1)\beta\geq0, \alpha^2+\beta^2\not=0\right\}, \\
 \mS\mV & := \left\{ \alpha Q_4 + \beta\sigma_2 \suchthat \alpha\geq 0, (n^2-4)\alpha+(n-1)\beta>0\right\} .
\end{align*}
It readily follows from Corollary~\ref{cor:second_variation} that
\begin{enumerate}
 \item $\mE$ is the cone of CVIs of weight $-4$ which are elliptic with spectrum tending to $+\infty$ at any Einstein metric with positive scalar curvature;
 \item $\mV$ is the cone of CVIs of weight $-4$ which are variationally stable at any Einstein metric with positive scalar curvature;
 \item $\mS\mV$ is the cone of CVIs $L$ of weight $-4$ such that for any Einstein metric $g$ of positive scalar curvature, $L_g>0$ and $DL_g=(-\Delta-2J)\circ F$ for $F$ a positive operator which commutes with $-\Delta-2J$.
\end{enumerate}
Note in particular that
\[ \mS\mV \subsetneq \mV \subsetneq \mE . \]

As noted in Subsection~\ref{subsec:std}, the fourth-order $Q$-curvature is an element of $\mS\mV$.  This cone seems relevant to the study of the functionals $\mF_{\gamma_2,\gamma_3}:=\gamma_2II + \gamma_3III$ on $\kC_1$, the volume-normalized conformal class of a round metric $g_0$ of constant sectional curvature on $S^4$, where $\gamma_2,\gamma_3\in\bR$ and
\begin{align*}
 II[e^{2w}g_0] & = \frac{1}{2}\int_M w\,P_4w\,\dvol_{g_0} + \int_M Q_4w\,\dvol_{g_0}, \\
 III[e^{2w}g_0] & = \frac{1}{2}\int_M \left(J_{e^{2w}g_0}\right)^2\dvol_{e^{2w}g_0}
\end{align*}
are conformal primitives of $Q_4$ and $-\Delta J=Q_4-4\sigma_2$, respectively.  Note that the functional $\mF_{\gamma_2,\gamma_3}$ is the logarithm of the ratio of functional determinants of powers of conformally covariant operators on the four-sphere~\cite{Branson1996,BransonOrsted1991b}.  Let
\[ L_{\gamma_2,\gamma_3} = (\gamma_2+\gamma_3)Q_4 - 4\gamma_3\sigma_2 \]
be the conformal gradient of $\mF_{\gamma_2,\gamma_3}$.  Note that
\begin{enumerate}
 \item $L_{\gamma_2,\gamma_3}\in\mV$ if and only if $5\gamma_2+2\gamma_3\geq0$ and $\gamma_2+\gamma_3\geq0$, with $(\gamma_2,\gamma_3)\not=0$;
 \item $L_{\gamma_2,\gamma_3}\in\mS\mV$ if and only if $\gamma_2>0$ and $\gamma_2+\gamma_3\geq0$.
\end{enumerate}
On the other hand, it is known that
\begin{enumerate}
 \item if $\gamma_2\gamma_3\geq0$ with at least one of $\gamma_2,\gamma_3$ nonzero, then $\mF_{\gamma_2,\gamma_3}$ is extremized exactly by the round metrics in $\kC_0$~\cite{BransonChangYang1992};
 \item if $\gamma_2\gamma_3<0$ and $\gamma_2/\gamma_3\in(-9/4,-1/4)$, then $\mF_{\gamma_2,\gamma_3}$ is neither bounded above nor bounded below in $\kC_0$~\cite{GurskyMalchiodi2012}; and
 \item if $\gamma_2\gamma_3<0$ and $\gamma_2/\gamma_3\in(-1,-1/4)$, then $\mF_{\gamma_2,\gamma_3}$ admits a critical point in $\kC_0$ which is not round~\cite{GurskyMalchiodi2012}.
\end{enumerate}
In particular, there are elements $L_{\gamma_2,\gamma_3}\in\mV\setminus\mS\mV$ --- including the conformal gradient of the functional determinant of the Paneitz operator~\cite{Branson1996} --- for which the corresponding functional $\mF_{\gamma_2,\gamma_3}$ has non-round critical points in $\kC_0$.  It is not currently known whether there are elements $L_{\gamma_2,\gamma_3}\in\mS\mV$ which satisfy this property, though there are elements $L_{\gamma_2,\gamma_3}\in\mS\mV$ for which the corresponding functional $\mF_{\gamma_2,\gamma_3}$ is neither bounded above nor bounded below.

\subsection{CVIs of weight $-6$}
\label{subsec:weight6}

It is known~\cite{CapGover2003,FeffermanGraham2012} that
\begin{align*}
 L_1 & := W_{ij}{}^{kl}W_{kl}{}^{st}W_{st}{}^{ij}, \\
 L_2 & := W_i{}^k{}_j{}^l W_k{}^s{}_l{}^t W_s{}^i{}_t{}^j, \\
 L_3 & := -\frac{1}{2}\Delta\lv W\rv^2 - 2(n-10)\nabla^l\left(W_{ijkl}C^{ijk}\right) + 2(n-10)P_i^sW_{sjkl}W^{ijkl} \\
  & \quad + 2J\lv W\rv^2 - 2(n-5)(n-10)\lv C\rv^2
\end{align*}
form a basis for the set of natural pointwise conformal invariants of weight $-6$.  A basis for the space of natural Riemannian invariants of weight $-6$ is given by the following lemma.  Note that, in the notation of Section~\ref{sec:bg}, this basis shows that $\mR_6^n/\im\delta$ is $10$-dimensional.

\begin{lem}
 \label{lem:weight6_riem_basis}
 The set
 \begin{multline*}
  \bigl\{ J^3, J\lv P\rv^2, \tr P^3, \lp B,P\rp, -J\Delta J, W_{ijkl}P^{ik}P^{jl}, J\lv W\rv^2, L_1, L_2, L_3, \\ -\Delta J^2, -\Delta\lv P\rv^2, -\Delta\lv W\rv^2, \delta\left(P(\nabla J)\right), \nabla^i\left(C_{sit}P^{st}\right), \nabla^l\left(W_{ijkl}C^{ijk}\right), \Delta^2J \bigr\}
 \end{multline*}
 forms a basis for the space $\mR_6^n$ of natural Riemannian invariants of weight $-6$.
\end{lem}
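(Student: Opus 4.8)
The plan is to produce a spanning set via Weyl's invariant theory, reduce it to the displayed list of seventeen invariants using the Bianchi identities and the Weyl--Schouten decomposition of $\Rm$, and then verify linear independence on an explicit family of model metrics (working in the stable range $n\geq 6$, so that Gilkey's isomorphism $\mR_6^6\cong\mR_6^n$ rules out low-dimensional degeneracies). By Weyl's invariant theory, as recalled around~\eqref{eqn:invariant_thy_basis}, $\mR_6^n$ is spanned by complete contractions of $\nabla^{r_1}\Rm\otimes\dots\otimes\nabla^{r_j}\Rm$ with $r:=\sum r_i$ even and $r+2j=6$, so $j\in\{1,2,3\}$, and I would organize the reduction by the number $j$ of curvature factors. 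For $j=1$ the contractions are of $\nabla^4\Rm$; using the second Bianchi identity and its traces, and commuting covariant derivatives (which introduces only terms with $j\geq2$ curvature factors), every such contraction reduces modulo the $j\geq2$ strata to a multiple of $\Delta^2R=2(n-1)\Delta^2J$, giving the single generator $\Delta^2J$. For $j=3$ one substitutes the Weyl decomposition $R_{ijkl}=W_{ijkl}+P_{ik}g_{jl}+P_{jl}g_{ik}-P_{il}g_{jk}-P_{jk}g_{il}$ from Section~\ref{sec:weight} and uses Weyl's first fundamental theorem to enumerate complete contractions of $\Sym^3$ of the curvature space: the pure-Weyl cubics are spanned by $L_1$ and $L_2$, the $W\otimes W\otimes P$ contractions by $J\lv W\rv^2$ and $P_i{}^sW_{sjkl}W^{ijkl}$, the $W\otimes P\otimes P$ contractions by $W_{ijkl}P^{ik}P^{jl}$, and the $P\otimes P\otimes P$ contractions by $J^3$, $J\lv P\rv^2$, and $\tr P^3$.

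For $j=2$ one again substitutes the Weyl decomposition and uses the second Bianchi identity together with the divergence formulae $\nabla^kW_{ijkl}=(n-3)C_{ijl}$ and $\nabla^kB_{jk}=-(n-4)C_{sjt}P^{st}$ and the identity $B_{ij}=\nabla^kC_{kij}+W_{isjt}P^{st}$; integrating by parts modulo $\im\delta$, the complete contractions of $\nabla^2\Rm\otimes\Rm$ and $\nabla\Rm\otimes\nabla\Rm$ reduce to the span of $-J\Delta J$, $\lp B,P\rp$, the term $P_i{}^sW_{sjkl}W^{ijkl}$ already encountered, and the exact divergences $-\Delta J^2$, $-\Delta\lv P\rv^2$, $-\Delta\lv W\rv^2$, $\delta(P(\nabla J))$, $\nabla^i(C_{sit}P^{st})$, $\nabla^l(W_{ijkl}C^{ijk})$. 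Finally, one uses the cited fact that $\{L_1,L_2,L_3\}$ spans the pointwise conformal invariants of weight $-6$ to trade the leftover curvature term $P_i{}^sW_{sjkl}W^{ijkl}$ for $L_3$ modulo lower-order and divergence terms. Collecting the generators from all three strata yields exactly the seventeen invariants in the statement, so they span $\mR_6^n$; of these, the seven displayed divergences span $\im\delta$, consistent with the assertion that $\mR_6^n/\im\delta$ is ten-dimensional.

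For linear independence it suffices to show that no nontrivial linear combination vanishes identically as a universal polynomial in the curvature jet; since every jet satisfying the universal symmetry relations is realized by a genuine metric, this reduces to exhibiting metrics (and points) on which the invariants take linearly independent values. I would use three families. First, locally conformally flat non-flat metrics $g=e^{2u}\delta$ on a Euclidean domain, for which $W=C=B=0$, which isolate the eight invariants built from $J$ and $P$ alone; varying the jet of $u$ shows those eight are independent. Second, Riemannian products of round space forms, which have parallel curvature, so that every genuinely divergence-type invariant vanishes while $L_1$, $L_2$, $J\lv W\rv^2$, $W_{ijkl}P^{ik}P^{jl}$, and the surviving part of $L_3$ become nonzero — here one uses the explicit Weyl tensors of products of spheres, in the spirit of the computation~\eqref{eqn:page_weyl}, together with the cited independence of $L_1,L_2,L_3$. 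Third, metrics with nonvanishing Cotton and Bach tensors (perturbations of the previous examples), which are needed to separate $\lp B,P\rp$ from $W_{ijkl}P^{ik}P^{jl}$ (their difference is a Cotton-squared term modulo divergences) and to activate the divergences $\nabla^l(W_{ijkl}C^{ijk})$ and $\nabla^i(C_{sit}P^{st})$.

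The main obstacle is the bookkeeping in the $j=2$ stratum: one must enumerate every complete contraction of $\nabla^2\Rm\otimes\Rm$ and $\nabla\Rm\otimes\nabla\Rm$, apply the second Bianchi identity and the commutator identities correctly — the latter feeding terms back into the $j=3$ stratum — and reduce modulo exact divergences without losing or double-counting generators, so that the quotient $\mR_6^n/\im\delta$ comes out precisely ten-dimensional and matches the listed representatives $J^3,J\lv P\rv^2,\tr P^3,\lp B,P\rp,-J\Delta J,W_{ijkl}P^{ik}P^{jl},J\lv W\rv^2,L_1,L_2,L_3$. By comparison, the independence verification, while lengthy, is routine once a sufficiently rich battery of model metrics is assembled.
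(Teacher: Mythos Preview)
Your spanning argument follows essentially the same strategy as the paper's: stratify by the number of curvature factors, substitute the Weyl--Schouten decomposition, and reduce via the second Bianchi identity and the divergence formulae for $W$, $C$, and $B$. The paper makes this concrete by recording the explicit identities that express $\lv\nabla J\rv^2$, $\lp P,\nabla^2J\rp$, $\lv\nabla P\rv^2$, $\lv C\rv^2$, $W_{ijkl}\nabla^iC^{klj}$, $\lv\nabla W\rv^2$, and $P_i{}^sW_{sjkl}W^{ijkl}$ in terms of the listed seventeen invariants, but the underlying reduction is the same as what you outline.

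Where you diverge is in the linear independence step, and here the paper's argument is dramatically simpler than yours. Rather than separating the invariants on an explicit battery of model metrics, the paper simply invokes the known dimension count $\dim\mR_6^n=17$ for $n\geq6$ (citing Donnelly via Weyl's invariant theory). Since the displayed set has seventeen elements and spans, it is automatically a basis. Your approach via conformally flat metrics, Riemannian products of space forms, and perturbations thereof would work in principle, but it is a substantial and delicate computation --- in particular, separating the seven divergence-type invariants from one another requires genuinely non-homogeneous examples, and separating $\lp B,P\rp$ from $W_{ijkl}P^{ik}P^{jl}$ and the Cotton terms is fiddly. The dimension count renders all of this unnecessary: once you know the space is seventeen-dimensional and you have seventeen spanning elements, you are done.
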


\begin{proof}
 Recall the identity
 \[ \Delta P_{ij} = B_{ij} + \nabla_i\nabla_j J - 2W_{isjt}P^{st} + nP_{is}P_j^s - \lv P\rv^2g_{ij} . \]
 Direct computations yield
 \begin{align*}
  \frac{1}{2}\Delta J^2 & = \lv\nabla J\rv^2 + J\Delta J, \\
  \delta\left(P(\nabla J)\right) & = \lp P,\nabla^2 J\rp + \lv\nabla J\rv^2, \\
  \frac{1}{2}\Delta\lv P\rv^2 & = \lv\nabla P\rv^2 + \lp B,P\rp + \lp P,\nabla^2J\rp - 2W_{ijkl}P^{ik}P^{jl} + n\tr P^3 - J\lv P\rv^2, \\
  \nabla^i\left(C_{sit}P^{st}\right) & = -\frac{1}{2}\lv C\rv^2 - \lp B,P\rp + W_{ijkl}P^{ik}P^{jl} , \\
  \nabla^l\left(W_{ijkl}C^{ijk}\right) & = -W_{ijkl}\nabla^iC^{klj} - (n-3)\lv C\rv^2 .
 \end{align*}
 Additionally, the Bianchi identity $\nabla_mR_{ijkl}+\nabla_iR_{jmkl}+\nabla_jR_{mikl}=0$ implies that
 \begin{multline}
  \label{eqn:Weyl_bianchi}
  \nabla_mW_{ijkl}+\nabla_iW_{jmkl}+\nabla_jW_{mikl} \\ = C_{imk}g_{jl} + C_{mjk}g_{il} + C_{jik}g_{ml} - C_{iml}g_{jk} - C_{mjl}g_{ik} - C_{jil}g_{mk} .
 \end{multline}
 As a consequence, we see that
 \begin{multline*}
  \frac{1}{2}\Delta\lv W\rv^2 = \lv\nabla W\rv^2 - 2(n-2)\nabla^l\left(W_{ijkl}C^{ijk}\right) + 2J\lv W\rv^2 + 2(n-2)P_i^sW_{sjkl}W^{ijkl} \\ - 2(n-2)(n-3)\lv C\rv^2 - L_1 - 4L_2 .
 \end{multline*}
 Combining these facts shows that our set spans $\mR_6^n$.  Finally, it follows readily from Weyl's invariant theory (e.g.\ \cite{Donnelly1974}) that $\dim\mR_6^n=17$ if $n\geq6$, and hence our set is also a basis.
\end{proof}

We next compute the conformal gradients of the integrals of the basis elements from Lemma~\ref{lem:weight6_riem_basis} which are neither divergences nor pointwise conformal invariants.

\begin{lem}
 \label{lem:conformal_gradients_weight6}
 Let $(M^n,g)$ be a Riemannian manifold.  As functionals on the conformal class $[g]$, it holds that
 \begin{align*}
  D\mS_{J^3}(\Upsilon) & = \bigllp -3\Delta J^2 + (n-6)J^3 , \Upsilon \bigrrp , \\
  D\mS_{J\lv P\rv^2}(\Upsilon) & = \bigllp -\Delta\lv P\rv^2 - 2\delta\left(P(\nabla J)\right) - \Delta J^2 + (n-6)J\lv P\rv^2 , \Upsilon \bigrrp , \\
  D\mS_{\tr P^3}(\Upsilon) & = \bigllp -3\delta\left(P(\nabla J)\right) - \frac{3}{2}\Delta\lv P\rv^2 - 3\nabla^k\left(C_{skt}P^{st}\right) + (n-6)\tr P^3 , \Upsilon \bigrrp , \\
  D\mS_{-J\Delta J}(\Upsilon) & = \bigllp 2\Delta^2J + \frac{n-2}{2}\Delta J^2 - (n-6)J\Delta J , \Upsilon \bigrrp , \\
  D\mS_{\lp B,P\rp}(\Upsilon) & = \bigllp 3(n-4)\nabla^k\left(C_{skt}P^{st}\right) + (n-6)\lp B,P\rp , \Upsilon \bigrrp , \\
  D\mS_{W\cdot P^2}(\Upsilon) & = \bigllp 2(n-3)\nabla^k(C_{skt}P^{st}) + \nabla^l(W_{ijkl}C^{ijk}) + (n-6)W\cdot P^2 , \Upsilon \bigrrp , \\
  D\mS_{J\lv W\rv^2}(\Upsilon) & = \bigllp -\Delta\lv W\rv^2 + (n-6)J\lv W\rv^2 , \Upsilon \bigrrp ,
 \end{align*}
 where we denote $W\cdot P^2:=W_{ijkl}P^{ik}P^{jl}$.
\end{lem}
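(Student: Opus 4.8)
The plan is to compute, for each basis element $L$ in the statement, the conformal linearization of the total functional $\mS_L(g):=\int_M L_g\,\dvol_g$ directly from the Leibniz rule and the variational formulae recorded above, and then to integrate by parts until $\Upsilon$ appears undifferentiated. Since $D\dvol_g(\Upsilon)=n\Upsilon\,\dvol_g$ (set $h=2\Upsilon g$ in $D\dvol_g[h]=\frac12\tr_g h$), one has
\[ D\mS_L(\Upsilon) = \int_M \bigl[ DL(\Upsilon) + n\Upsilon L_g\bigr]\,\dvol_g, \]
so the task is to compute the pointwise conformal linearization $DL(\Upsilon)$ and rewrite it by integration by parts. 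Beyond the formulae for $DP_{ij}$, $DC_{ijk}$, $DB_{ij}$ already listed, I would use $Dg_{ij}(\Upsilon)=2\Upsilon g_{ij}$ and $Dg^{ij}(\Upsilon)=-2\Upsilon g^{ij}$; the identity $DJ(\Upsilon)=-2J\Upsilon-\Delta\Upsilon$ from~\eqref{eqn:conformal_linearization_R} with $J=R/2(n-1)$; the consequence $D(\Delta f)(\Upsilon)=-2\Upsilon\,\Delta f+(n-2)\lp\nabla\Upsilon,\nabla f\rp$ of $D\nabla_i\omega_j(\Upsilon)$ applied to $\omega=df$ with $f$ fixed; and the conformal invariance of the $(1,3)$-Weyl tensor, which gives $DW_{ijkl}(\Upsilon)=2\Upsilon W_{ijkl}$ for the $(0,4)$-version and hence $D\lv W\rv^2(\Upsilon)=-4\Upsilon\lv W\rv^2$. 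The factor $(n-6)$ in every output is then automatic: each $L$ is homogeneous of weight $-6$, so the $-6L\Upsilon$ summand of $DL(\Upsilon)$ combines with the $nL\Upsilon$ coming from the volume element.

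The two trivial cases are $D(J^3)(\Upsilon)=-6J^3\Upsilon-3J^2\Delta\Upsilon$, which integrates by parts to $\llp -3\Delta J^2+(n-6)J^3,\Upsilon\rrp$, and $D(J\lv W\rv^2)(\Upsilon)=-6J\lv W\rv^2\Upsilon-\lv W\rv^2\Delta\Upsilon$, giving the stated formula at once. For $-J\Delta J$ one needs in addition $D(\Delta J)(\Upsilon)$ and $\frac12\Delta J^2=\lv\nabla J\rv^2+J\Delta J$; integrating by parts the terms $\Delta J\,\Delta\Upsilon$, $J\lp\nabla\Upsilon,\nabla J\rp$, $J\Delta(J\Upsilon)$ and $J\Delta^2\Upsilon$ and collecting coefficients of $\Upsilon$ yields $2\Delta^2 J+\frac{n-2}{2}\Delta J^2-(n-6)J\Delta J$. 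For the remaining four I would first record $D(J\lv P\rv^2)(\Upsilon)=-6J\lv P\rv^2\Upsilon-\lv P\rv^2\Delta\Upsilon-2JP^{ij}\Upsilon_{ij}$, $D(\tr P^3)(\Upsilon)=-6\Upsilon\tr P^3-3(P^2)^{ij}\Upsilon_{ij}$, $D\lp B,P\rp(\Upsilon)=-6\Upsilon\lp B,P\rp-2(n-4)C_{isj}P^{ij}\,\Upsilon^s-B^{ij}\Upsilon_{ij}$, and $D(W\cdot P^2)(\Upsilon)=-6\Upsilon\,W\cdot P^2-2W_{ijkl}P^{jl}\Upsilon^{ik}$. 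Integrating by parts the second-order-in-$\Upsilon$ terms reduces the problem to rewriting $\nabla_l\nabla_k(JP^{kl})$, $\nabla_j\nabla_i(P^2)^{ij}$, $\nabla_j\nabla_i B^{ij}$ and $\nabla^k\nabla^i(W_{ijkl}P^{jl})$ in terms of the generators of $\mR_6^n$, for which I would use the contracted second Bianchi identity $\nabla^k P_{ki}=\nabla_i J$, the divergence formulae $\nabla^k W_{ijkl}=(n-3)C_{ijl}$ and $\nabla^k B_{jk}=-(n-4)C_{sjt}P^{st}$, the Bianchi identity~\eqref{eqn:Weyl_bianchi}, and the auxiliary divergence identities already derived in the proof of Lemma~\ref{lem:weight6_riem_basis}.

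I expect the only genuine difficulty to be bookkeeping, concentrated in three places. First, there are two a priori distinct ways to contract the Cotton tensor against a symmetric $2$-tensor and two against the Weyl tensor, and one must repeatedly invoke the antisymmetry of $C$ in its first two slots, the symmetry of $P$, and the pair symmetry of $W$ to see that every such contraction that occurs equals $\pm\nabla^k(C_{skt}P^{st})$ or $\pm\nabla^l(W_{ijkl}C^{ijk})$ modulo lower-weight divergences. Second, in the $\tr P^3$ and $W\cdot P^2$ computations one must commute covariant derivatives, producing curvature terms that, after the algebraic Bianchi identities, must be merged with the $\lv C\rv^2$ and $W_{ijkl}P^{ik}P^{jl}$ terms that the auxiliary identities introduce. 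Third, one must track numerical coefficients so that the $\lv C\rv^2$ contributions cancel, since $\lv C\rv^2$ is not one of the listed generators but is re-expressed through $\nabla^i(C_{sit}P^{st})$, $\nabla^l(W_{ijkl}C^{ijk})$ and $\lp B,P\rp$ via the identities in that proof; for example, in the $\lp B,P\rp$ case the $C$-term contributes $2(n-4)\nabla^k(C_{skt}P^{st})$ and double-integrating $B^{ij}\Upsilon_{ij}$ with $\nabla^k B_{jk}$ contributes a further $(n-4)\nabla^k(C_{skt}P^{st})$, summing to the stated $3(n-4)\nabla^k(C_{skt}P^{st})$. Carrying this out for each of the seven functionals produces the asserted identities.
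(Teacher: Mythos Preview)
Your proposal is correct and follows essentially the same route as the paper: compute the pointwise conformal linearization of each $L$, integrate by parts until $\Upsilon$ is undifferentiated, and rewrite the resulting double divergences $\nabla^j\nabla^k(P_j^sP_{sk})$, $\nabla^j\nabla^kB_{jk}$, $\nabla^j\nabla^k(W_{jskt}P^{st})$ in terms of the basis generators using the Bianchi-type identities from Lemma~\ref{lem:weight6_riem_basis}. The paper's proof is terser—it simply records these three double-divergence identities (plus one for $W_{ijkl}W_s{}^{jkl}$) and asserts the rest follows—but your more explicit outline, including the sample coefficient check for $\lp B,P\rp$, matches it step for step.
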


\begin{proof}
 Observe that
 \begin{align*}
  \nabla^j\nabla^k\left(P_j^sP_{sk}\right) & = \delta\left(P(\nabla J)\right) + \frac{1}{2}\Delta\lv P\rv^2 + \nabla^k\left(C_{skt}P^{st}\right), \\
  \nabla^j\nabla^kB_{jk} & = -(n-4)\nabla^k\left(C_{skt}P^{st}\right), \\
  \nabla^j\nabla^k\left(W_{jskt}P^{st}\right) & = -(n-3)\nabla^k\left(C_{skt}P^{st}\right) - \frac{1}{2}\nabla^l\left(W_{ijkl}C^{ijk}\right), \\
  \nabla^i\nabla^s\left(W_{ijkl}W_s{}^{jkl}\right) & = -(n-4)\nabla^l(W_{ijkl}C^{ijk}) + \frac{1}{4}\Delta\lv W\rv^2,
 \end{align*}
 where the last identity uses~\eqref{eqn:Weyl_bianchi}.  The result follows from these identities, the formulae for the conformal linearizations of the Schouten and the Bach tensors, and integration by parts.
\end{proof}

We can now find a basis for the vector space of CVIs of weight $-6$.  To that end, recall that the sixth-order renormalized volume coefficient $v_3$ (see~\cite{Graham2000}) and the sixth-order $Q$-curvature (see~\cite{Branson1985,GoverPeterson2003,Wunsch1986}) are
\begin{align}
 \label{eqn:defn_v3} v_3 & := \frac{1}{6}J^3 - \frac{1}{2}J\lv P\rv^2 + \frac{1}{3}\tr P^3 + \frac{1}{3(n-4)}\lp B, P\rp, \\
 \label{eqn:defn_Q6} Q_6 & := \Delta^2 J - \frac{n-6}{2}J\Delta J - \frac{n+2}{2}\Delta J^2 + 8\delta\left(P(\nabla J)\right) + 4\Delta\lv P\rv^2 \\
  \notag & \quad + \frac{n^2-4}{4}J^3 - 4nJ\lv P\rv^2 + 16\tr P^3 + \frac{16}{n-4}\lp B,P\rp .
\end{align}
Both $v_3$ and $Q_6$ are conformally variational.  We also denote
\begin{align}
 \label{eqn:defn_I1} I_1 & := -\Delta J^2 + \frac{n-6}{3}J^3, \\
 \label{eqn:defn_I2} I_2 & := -\Delta\lv P\rv^2 - 2\delta\left(P(\nabla J)\right) - \Delta J^2 + (n-6)J\lv P\rv^2, \\
 \label{eqn:defn_K1} K_1 & := 3(n-4)\nabla^k\left(C_{skt}P^{st}\right) + (n-6)\lp B,P\rp, \\
 \label{eqn:defn_K2} K_2 & := 2(n-3)\nabla^k(C_{skt}P^{st}) + \nabla^l(W_{ijkl}C^{ijk}) + (n-6)W\cdot P^2, \\
 \label{eqn:defn_K3} K_3 & := -\left\lp W^2 - \frac{1}{4}\lv W\rv^2g,P\right\rp + \frac{n-4}{2}\lv C\rv^2 .
\end{align}

The discussion above establishes that these invariants together with the pointwise conformal invariants $L_1$, $L_2$, and $L_3$ form a basis for the vector space of CVIs of weight $-6$.

\begin{prop}
 \label{prop:cvi6_basis}
 The set $\{v_3,Q_6,I_1,I_2,K_1,K_2,K_3,L_1,L_2,L_3\}$ forms a basis for the vector space of CVIs of weight $-6$.
\end{prop}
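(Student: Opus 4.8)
The plan is to reduce the statement, via the isomorphism $\mI_6^n\cong\mR_6^n/\im\delta$ of Section~\ref{sec:bg} (which holds for $n>6$), to a linear-algebra computation against the basis of $\mR_6^n$ supplied by Lemma~\ref{lem:weight6_riem_basis}. Of the seventeen basis elements there, exactly seven are exact divergences and three are the pointwise conformal invariants $L_1,L_2,L_3$, so (as noted after Lemma~\ref{lem:weight6_riem_basis}) $\mR_6^n/\im\delta$ is ten-dimensional, with basis the classes of $J^3$, $J\lv P\rv^2$, $\tr P^3$, $\lp B,P\rp$, $-J\Delta J$, $W_{ijkl}P^{ik}P^{jl}$, $J\lv W\rv^2$, $L_1$, $L_2$, $L_3$. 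Since a pointwise conformal invariant of weight $-6$ is automatically a CVI, it suffices to prove that (i) each of $v_3,Q_6,I_1,I_2,K_1,K_2,K_3$ is a CVI of weight $-6$, and (ii) the ten classes $[v_3],[Q_6],[I_1],[I_2],[K_1],[K_2],[K_3],[L_1],[L_2],[L_3]$ are linearly independent in $\mR_6^n/\im\delta$; the ten invariants then form a basis of $\mI_6^n$.

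For (i): $v_3$ and $Q_6$ are conformally variational by the references recalled above, and by Lemma~\ref{lem:conformal_gradients_weight6} together with \eqref{eqn:defn_I1}--\eqref{eqn:defn_K2} the invariants $3I_1$, $I_2$, $K_1$, $K_2$ are, respectively, the conformal gradients of $\mS_{J^3}$, $\mS_{J\lv P\rv^2}$, $\mS_{\lp B,P\rp}$, $\mS_{W\cdot P^2}$, hence CVIs. The invariant $K_3$ is the delicate case. Combining its definition \eqref{eqn:defn_K3}, the defining formula for $L_3$ solved for $P_i^sW_{sjkl}W^{ijkl}$, the conformal gradient of $\mS_{J\lv W\rv^2}$ from Lemma~\ref{lem:conformal_gradients_weight6}, and the identity $\nabla^i(C_{sit}P^{st})=-\tfrac12\lv C\rv^2-\lp B,P\rp+W_{ijkl}P^{ik}P^{jl}$ from the proof of Lemma~\ref{lem:weight6_riem_basis}, a short computation --- in which, \emph{crucially}, the $\nabla^l(W_{ijkl}C^{ijk})$ and $J\lv W\rv^2$ terms cancel --- gives, for $n\ne10$,
\[ K_3 = -\frac{1}{2(n-10)}L_3 + \frac{1}{4(n-10)}\bigl(-\Delta\lv W\rv^2+(n-6)J\lv W\rv^2\bigr) + K_1 - K_2 . \]
Since the right-hand side is a linear combination of CVIs, $K_3$ is a CVI.

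For (ii): reducing modulo $\im\delta$, the definitions \eqref{eqn:defn_v3}--\eqref{eqn:defn_K3} and the last display yield the $10\times10$ transition matrix from $[v_3],\dots,[L_3]$ to the basis above. In it, $[I_1],[I_2],[K_1],[K_2]$ are nonzero multiples of $[J^3],[J\lv P\rv^2],[\lp B,P\rp],[W_{ijkl}P^{ik}P^{jl}]$; $[v_3]$ and $[Q_6]$ bring in $[\tr P^3]$ and $[-J\Delta J]$ with nonzero coefficients $\tfrac13$ and $\tfrac{n-6}{2}$; $[K_3]$ brings in $[J\lv W\rv^2]$ with coefficient $\tfrac{n-6}{4(n-10)}$; and $[L_1],[L_2],[L_3]$ are three further basis vectors. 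Expanding the determinant in turn along the columns $J\lv W\rv^2$, $W_{ijkl}P^{ik}P^{jl}$, $-J\Delta J$, $\tr P^3$ --- each of which, at its stage, has only one nonzero entry not yet used (in rows $K_3,K_2,Q_6,v_3$) --- leaves a diagonal matrix with nonzero entries, three of them proportional to $n-6$. Hence the determinant is a nonzero scalar multiple of $(n-6)^6(n-10)^{-1}$, which is nonzero for $n\notin\{6,10\}$; this gives (ii), and the proposition, in those dimensions.

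The two excluded dimensions need separate attention. For the critical dimension $n=2k=6$ the isomorphism $\mI_6^n\cong\mR_6^n/\im\delta$ requires $n>2k$ and is unavailable; one uses instead Gilkey's isomorphism $\mR_6^6\cong\mR_6^n$ induced by the product embedding $M^6\hookrightarrow M^6\times T^{n-6}$ (Section~\ref{sec:bg}): each invariant in our list is defined on every manifold of dimension $\geq6$ and these restrictions respect the isomorphism, so a basis of $\mI_6^n$ for $n>6$ restricts to a basis of $\mI_6^6$, as in~\cite{Branson1995}. For $n=10$ the displayed formula for $K_3$ degenerates and the invariant $L_3$ collapses to $-\tfrac12\Delta\lv W\rv^2+2J\lv W\rv^2$, which is $\tfrac12$ the conformal gradient of $\mS_{J\lv W\rv^2}$; one must therefore rerun (i) and (ii) using the $n=10$ identities, in particular rechoosing the representative of $\mR_6^{10}/\im\delta$ used in place of $[L_3]$. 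I expect (i) for $K_3$, together with this $n=10$ analysis, to be the main obstacle: establishing that $K_3$ is a genuine CVI --- not merely a CVI modulo an exact divergence --- relies on the precise curvature identities above rather than on bookkeeping carried out only in $\mR_6^n/\im\delta$.
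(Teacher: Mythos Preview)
Your argument follows the same route as the paper, which simply asserts that ``the discussion above establishes'' the proposition: identify $\mI_6^n$ with $\mR_6^n/\im\delta$ via Section~\ref{sec:bg}, use Lemma~\ref{lem:weight6_riem_basis} to get a ten-dimensional quotient, and invoke Lemma~\ref{lem:conformal_gradients_weight6} together with the definitions \eqref{eqn:defn_v3}--\eqref{eqn:defn_K3} to produce ten CVIs with independent classes. Your treatment is in fact more explicit than the paper's at the one genuinely nontrivial point, namely checking that $K_3$ is a CVI; your displayed identity
\[
K_3 = -\tfrac{1}{2(n-10)}L_3 + \tfrac{1}{4(n-10)}\bigl(-\Delta\lv W\rv^2+(n-6)J\lv W\rv^2\bigr) + K_1 - K_2
\]
is correct (one verifies $K_1-K_2=-\tfrac{n-6}{2}\lv C\rv^2-\nabla^l(W_{ijkl}C^{ijk})$ using the identity in the proof of Lemma~\ref{lem:weight6_riem_basis}), and your cofactor expansion of the $10\times10$ transition matrix is accurate, yielding a determinant proportional to $(n-6)^6(n-10)^{-1}$.

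Two remarks on the edge cases you flag. For $n=10$ you are right that the displayed formula for $K_3$ degenerates, and in fact the very basis of Lemma~\ref{lem:weight6_riem_basis} is suspect there: at $n=10$ the definition of $L_3$ collapses to $-\tfrac12\Delta\lv W\rv^2+2J\lv W\rv^2$, a linear combination of two other listed elements, so $P_i^sW_{sjkl}W^{ijkl}$ is no longer in the span of the seventeen. A clean way around this---which you could add---is to note that $DK_3(\Upsilon)$ is a natural operator whose coefficients are polynomial in $n$, so self-adjointness for infinitely many $n$ forces it for all $n\ge6$; likewise linear independence in $\mI_6^n$ for a Zariski-dense set of $n$ extends to $n=10$. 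For $n=6$ your appeal to the Gilkey/analytic-continuation device matches what the paper does in Section~\ref{sec:bg}; neither you nor the paper make this fully rigorous, and the difficulty you anticipate (that for $n=6$ the classes $[I_1],[I_2],[K_1],[K_2]$ vanish in $\mR_6^6/\im\delta$) is real, but the paper explicitly defers to~\cite{Branson1995} for this.
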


Let us discuss each of these basis elements in some detail.

\begin{example}
 \label{ex:v3}
 Let $(M^n,g)$ be an Einstein manifold with $n\geq6$.  Combining the conformal linearization~\eqref{eqn:conformal_linearization_vk_einstein} of $v_3$ with the fact $v_3=\sigma_3$ in this case yields
 \[ Dv_3(\Upsilon) = \frac{(n-1)(n-2)}{2n^2}J^2\left(-\Delta-2J\right)\Upsilon \]
 at $g$.  This recovers the fact that $v_3$ is positive and variationally stable at any Einstein metric with positive scalar curvature.
\end{example}

\begin{example}
 \label{ex:Q6}
 Combining the conformal linearization~\eqref{eqn:Q_cvi} of $Q_6$ and the factorization~\eqref{eqn:GJMS_factor} of $P_6$ at an Einstein metric, we see that if $(M^n,g)$ is Einstein, then the conformal linearization of $Q_6$ at $g$ is
 \[ DQ_6(\Upsilon) = \left(\Delta^2 - \frac{3n^2-2n-32}{2n}J\Delta + \frac{3(n^2-16)(n^2-4)}{2n^2}J^2\right)\left(-\Delta-2J\right)\Upsilon . \]
 This and the formula for $Q_6$ at $g$ deduced by applying~\eqref{eqn:GJMS_factor} to a constant function recovers the fact that $Q_6$ is positive and variational at any Einstein metric of positive scalar curvature.
\end{example}

\begin{example}
 \label{ex:I1}
 It is clear from~\eqref{eqn:defn_I1} that $I_1=\frac{n-6}{3}J^3$ for any Einstein manifold $(M^n,g)$.  For general $n$-manifolds, we readily compute that
 \[ DI_1(\Upsilon) = - 6I_1\Upsilon + 2\Delta\left(J\Delta\Upsilon\right) - (n-10)\delta\left(J^2\,d\Upsilon\right) . \]
 In particular, if $(M^n,g)$ is Einstein, then
 \[ DI_1(\Upsilon) = 2J\left(-\Delta+\frac{n-6}{2}J\right)\left(-\Delta-2J\right)\Upsilon . \]
 Thus $I_1$ is positive and variationally stable at any Einstein metric with positive scalar curvature.
\end{example}

\begin{example}
 \label{ex:I2}
 It is clear from~\eqref{eqn:defn_I2} that $I_2=\frac{n-6}{n}J^3$ for any Einstein manifold $(M^n,g)$.  For general $n$-manifolds, we readily compute that
 \begin{multline*}
  DI_2(\Upsilon) = -6I_2\Upsilon + 2\Delta\left(J\Delta\Upsilon\right) + 2\Delta\delta\left(P(\nabla\Upsilon)\right) + 2\delta\left(P(\nabla\Delta\Upsilon)\right) \\ - \delta\left(\left((n-10)\lv P\rv^2g - 4J^2g + 2\nabla^2J + 2(n-8)JP\right)(\nabla\Upsilon)\right) .
 \end{multline*}
 In particular, if $(M^n,g)$ is Einstein, then
 \[ DI_2(\Upsilon) = \frac{2(n+2)}{n}J\left(-\Delta+\frac{3(n-6)}{2(n+2)}J\right)\left(-\Delta-2J\right)\Upsilon . \]
 Thus $I_2$ is positive and variationally stable at any Einstein metric with positive scalar curvature.
\end{example}

\begin{example}
 \label{ex:K1}
 It is clear from~\eqref{eqn:defn_K1} that $K_1=0$ for any Einstein manifold $(M^n,g)$.  For general $n$-manifolds, we readily compute that
 \[ DK_1(\Upsilon) = -6K_1\Upsilon - \delta\left(\left(6(n-4)W\cdot P - 2(n-3)B\right)(\nabla\Upsilon)\right), \]
 where $(W\cdot P)_{ij}:=W_{isjt}P^{st}$.  In particular, if $(M^n,g)$ is Einstein, then $DK_1(\Upsilon)=0$.  Thus $K_1$ is neither positive nor variationally stable at all Einstein metrics with positive scalar curvature.
\end{example}

\begin{example}
 \label{ex:K2}
 It is clear from~\eqref{eqn:defn_K2} that $K_2=0$ for any Einstein manifold $(M^n,g)$.  For general $n$-manifolds, we readily compute that
 \[ DK_2(\Upsilon) = -6K_2\Upsilon - \delta\left(\left(6(n-4)W\cdot P + W^2 - 2(n-3)B\right)(\nabla\Upsilon)\right), \]
 where $(W^2)_{ij}:=W_{istu}W_j{}^{stu}$.  In particular, if $(M^n,g)$ is Einstein, then
 \[ DK_2(\Upsilon) = -\delta\left(W^2(\nabla\Upsilon)\right) . \]
 Thus $K_2$ is neither positive nor variationally stable at all Einstein metrics with positive scalar curvature.
\end{example}

Of course, we can add multiples of $K_1$ and nonnegative multiples of $K_2$ to any CVI $L$ of weight $-6$ which is positive and variationally stable at any Einstein metric to obtain a new CVI with the same properties.

\begin{example}
 \label{ex:pointwise_conformal_invariants6}
 Each of the pointwise conformal invariants $L_1$, $L_2$, and $L_3$, as well as the CVI $K_3$, are not necessarily constant at an Einstein metric.  This is a consequence of~\eqref{eqn:page_weyl} when computing the Weyl curvature of Einstein products of the Page metric with suitably normalized round spheres.
\end{example}

\subsection*{Acknowledgments}
JSC was supported by a grant from the Simons Foundation (Grant No.\ 524601).  WY was supported by NSFC (Grant No.\ 11521101, No.\ 11601531) and the Fundamental Research Funds for the Central Universities (Grant No.\ 2016-34000-31610258).

\bibliographystyle{abbrv}
\bibliography{../bib}
\end{document}